\newcommand{\N}{{\Bbb Z} _{> 0} }
\newcommand{\Zp} {\Z _ {\ge 0} }
\newcommand{\Z}{\Bbb Z}
\providecommand{\U}[1]{\protect\rule{.1in}{.1in}}
\newcommand{\HVir}{\mathcal H}
\newcommand{\Y}{\mathcal Y}
\newtheorem{theorem}{Theorem}[section]
\newtheorem{corollary}[theorem]{Corollary}
\newtheorem{lemma}[theorem]{Lemma}
\newtheorem{remark}[theorem]{Remark}
\newtheorem{example}[theorem]{Example}
\newtheorem{proposition}[theorem]{Proposition}
\def \a{\alpha }
\def \b {\beta}
\newcommand{\bea}{\begin{eqnarray}}
\newcommand{\eea}{\end{eqnarray}}
\begin{document}

\keywords{twisted Heisenberg-Virasoro algebra, $W(2,2)$-algebra, Free fields, Schur polynomials, Singular vectors}
\title[{Free field realization of the twisted Heisenberg-Virasoro algebra at level zero and its applications} ]{Free field realization of the twisted Heisenberg-Virasoro algebra at level zero and its applications}

%\thanks{The authors are partially supported by the Croatian Science Foundation under the project 2634. }
  \subjclass[2000]{
Primary 17B69, Secondary 17B67, 17B68, 81R10}

\author{Dra\v zen Adamovi\' c }
\curraddr{Department of Mathematics, University of Zagreb, Bijeni\v cka 30, 10 000
Zagreb, Croatia}
\email{adamovic@math.hr}
\author{Gordan Radobolja }
\curraddr{Faculty of Science, University of Split, Teslina 12,
21 000 Split, Croatia }
\email{gordan@pmfst.hr}
\date{}
\maketitle

\begin{abstract}
We investigate the free field realization of the twisted
Hei\-sen\-berg-Virasoro algebra $\HVir$ at level zero. We completely
describe the structure of the associated Fock representations. Using
vertex-algebraic methods and screening operators we construct singular vectors
in certain Verma modules as Schur polynomials. We completely solve the irreducibility problem for the tensor products of irreducible highest weight modules with intermediate series. We also determine the fusion rules for an interesting subcategory of $\HVir$--modules. Finally, as an application we present a
free-field realization of the $W(2,2)$-algebra and interpret the $W(2,2)$--singular vectors as $\HVir$--singular vectors in Verma modules.
\end{abstract}

\baselineskip=14pt
\newenvironment{demo}[1]{\vskip-\lastskip\medskip\noindent{\em#1.}\enspace
}{\qed\par\medskip}

\markboth{Dra\v zen Adamovi\' c and Gordan Radobolja} { } \pagestyle{myheadings}

\section{Introduction}

In this paper we study the representation theory  of the twisted Heisenberg-Virasoro algebra at level zero by   using methods from the theory of vertex algebras. An emphasis will be put on the construction and explicit realization of modules, intertwining operators and the determination of fusion rules. The highest weight representation theory of the twisted  Heisenberg-Virasoro algebra at level zero was developed by Y. Billig (cf. \cite{Billig}, \cite{Billig2}). This Lie algebra is important because it appears in the representation theory of toroidal Lie algebras. We shall show in this paper that it is also interesting from the vertex-algebraic point of view. In the present paper, instead of applying the highest weight representation theory developed by Y. Billig, we shall apply the concept of a free field realization of modules.

Recall that the twisted Heisenberg-Virasoro algebra is the infinite-dimen\-si\-o\-nal complex Lie
algebra $\HVir$ with basis
$$
\{L(n),I(n):n\in\mathbb{Z}\}\cup\{C_{L},C_{LI},C_{I}\}
$$
and commutation relations:
\bea
&& \left[  L(n),L(m)\right]  =(n-m)L(n+m)+\delta_{n,-m}\frac{n^{3}-n}{12}%
C_{L},\\
&& \left[  L(n),I(m)\right]  =-mI(n+m)-\delta_{n,-m}( n^{2}+n)
C_{LI},\\
 && \left[  I(n),I(m)\right]  =n\delta_{n,-m}C_{I},\\ && \left[  {\HVir} ,C_{L}\right]  =\left[  {\HVir},C_{LI}\right]  =\left[
\HVir,C_{I}\right]  =0.
\eea

  Using the results and concepts of \cite{Ar} we see that when the central element $C_I$  of the  Heisenberg subalgebra of $\HVir$  acts non-trivially, then every highest weight $\HVir$--module is a tensor product of a highest weight module for the Virasoro algebra and an irreducible module for the Heisenberg algebra. So in this case, a free-field realization of $\HVir$--modules can be obtained by using the  free-field realization of the Virasoro algebra, which is well-known (cf. \cite{FF}). Therefore in this paper we shall be  focused only  on the case of level zero, i.e., when  $C_I$ acts trivially.

  We should mention that a free field realization of certain indecomposable modules at level zero was constructed and analyzed in \cite{JJ}.

Let $V^{\HVir}(c_{L},c_{I},c_{L,I},h,h_{I})$ denote the Verma module with  highest weight $(c_{L},c_{I},c_{L,I},h,h_{I})$ (cf. \cite{Billig}).

When $c_I=0$, we will write $V^{\HVir}$ for $V^{\HVir}(c_L,0,c_{L,I},h,h_I)$.

\begin{theorem}
\cite{Billig} Assume that $c_{I}=0$ and $c_{L,I}\neq0$.

\begin{description}
\item[(i)] If $\frac{h_{I}}{c_{LI}}\notin\mathbb{Z}$ or $\frac{h_{I}}{c_{L,I}%
}=1$, then the Verma module $V^{\HVir}$ is irreducible.

\item[(ii)] If $\frac{h_{I}}{c_{LI}}\in\mathbb{Z\setminus\{}1\}$, then
$V^{\HVir}$ has a singular vector $v\in V_{p}$, where
$p=|\frac{h_{I}}{c_{LI}}-1|$. The quotient module $L^{\HVir}(c_{L},0,c_{L,I}%
,h,h_{I})=V^{\HVir}/U(\HVir)v$ is irreducible and its character is
$$
\operatorname*{char}L^{\HVir}(c_{L},0,c_{L,I},h,h_{I})=q^{h}(1-q^{p})\prod_{j\geq
1}(1-q^{j})^{-2}.
$$
\end{description}
\end{theorem}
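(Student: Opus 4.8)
The plan is to analyze the Verma module structure directly using the commutation relations, with the key insight being that at level zero ($c_I=0$) the Heisenberg generators $I(n)$ generate a degenerate structure. First I would set up the standard weight-space decomposition $V^{\HVir}=\bigoplus_{p\geq 0} V_p$ where $V_p$ is spanned by monomials $L(-n_1)\cdots L(-n_k)I(-m_1)\cdots I(-m_\ell)v_{\mathrm{hw}}$ of total degree $p$, and compute the action of the positive modes $L(n), I(n)$ ($n>0$) on such vectors. The crucial computational tool is the pairing coming from the contravariant (Shapovalov) form, and I would examine how $I(n)$ acts: from the relation $[I(n),I(m)]=n\delta_{n,-m}C_I$, setting $C_I=0$ makes the $I(n)$ pairwise commute, so the Heisenberg part collapses and the only nontrivial pairing between $I$-modes and the highest weight comes through $[L(n),I(m)]=-mI(n+m)-\delta_{n,-m}(n^2+n)C_{LI}$.

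\medskip

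The main structural claim to establish is part (i): irreducibility when $\tfrac{h_I}{c_{LI}}\notin\mathbb{Z}$ or $\tfrac{h_I}{c_{LI}}=1$. The approach here is to show the Shapovalov form is nondegenerate on each weight space $V_p$, equivalently that there are no singular vectors in positive degree. I would look for a singular vector $v\in V_p$ by writing it as a general linear combination of the PBW monomials and imposing $L(n)v=I(n)v=0$ for $n>0$; it suffices to check $n=1$ and $n=2$ (or rather $L(1),L(2),I(1),I(2)$) since these generate the positive part. The conditions $I(1)v=I(2)v=0$ together with the action formulas should force strong constraints tying the coefficients to the value $\tfrac{h_I}{c_{LI}}$, and the arithmetic of these constraints is where the integrality hypothesis enters: a solution exists precisely when $\tfrac{h_I}{c_{LI}}$ is an integer different from $1$, which is exactly the complementary case (ii).

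\medskip

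For part (ii), assuming $\tfrac{h_I}{c_{LI}}=:k\in\mathbb{Z}\setminus\{1\}$ and setting $p=|k-1|$, I would exhibit the singular vector $v\in V_p$ explicitly (guided by the free-field realization developed elsewhere in the paper, where screening operators produce singular vectors as Schur polynomials). Once $v$ is shown to satisfy $L(n)v=I(n)v=0$ for all $n>0$, the submodule $U(\HVir)v$ is a proper Verma-type submodule, and irreducibility of the quotient $L^{\HVir}=V^{\HVir}/U(\HVir)v$ follows because the single singular vector accounts for the entire maximal submodule (one shows there are no further singular vectors in the quotient, again via the Shapovalov-form argument, now with the integrality condition already consumed). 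The character formula then follows by the standard Euler-characteristic computation: the full Verma character is $q^h\prod_{j\geq 1}(1-q^j)^{-2}$ (two bosonic towers, from the $L$-modes and the $I$-modes), and subtracting the submodule $U(\HVir)v\cong V^{\HVir}$ shifted by degree $p$ gives the factor $(1-q^p)$, yielding $\operatorname{char}L^{\HVir}=q^h(1-q^p)\prod_{j\geq 1}(1-q^j)^{-2}$.

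\medskip

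The hard part will be pinning down the exact form of the singular vector and proving there are no \emph{other} singular vectors, since the twisted Heisenberg-Virasoro algebra at level zero has a more subtle submodule structure than the Virasoro algebra alone; the degeneracy $C_I=0$ means the contravariant form is not automatically controlled by a simple Kac-type determinant, so I expect the cleanest route is to defer the explicit construction to the free-field and screening-operator machinery introduced later in the paper, and here merely to verify the singular-vector property and extract the character via the Euler–Poincaré principle.
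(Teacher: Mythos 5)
First, note that the paper offers no proof of this statement at all: it is quoted verbatim from \cite{Billig} in the introduction, and the later arguments of the paper (notably Propositions \ref{free-Verma} and \ref{struktura}) \emph{use} it as an input. So there is no internal proof to compare against, and your proposal must be judged as a self-contained proof of Billig's theorem. As such it is an outline rather than a proof: the two assertions that carry all of the content --- that the system $L(1)v=L(2)v=I(1)v=0$ on $V_p$ has a nonzero solution precisely when $h_I/c_{L,I}\in\mathbb{Z}\setminus\{1\}$ with $p=|h_I/c_{L,I}-1|$, and that the radical of the contravariant form is exactly $U(\HVir)v$ --- are stated as expectations (``the arithmetic of these constraints is where the integrality hypothesis enters'') but never established. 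That determinant-type computation is the substance of the theorem; without it neither (i) nor (ii) is proved.

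Second, the proposed fallback of deferring the construction of $v$ to ``the free-field and screening-operator machinery introduced later in the paper'' is both circular and, in one of the two cases, unavailable. Circular, because the identification of the Fock space $\mathcal{F}_{r,s}$ with the Verma module and the filtration of Proposition \ref{struktura} are themselves derived from Billig's irreducibility criterion and character formula; unavailable, because the screening operator $Q$ produces the singular vector only when $h_I/c_{L,I}-1=p>0$ (the paper states explicitly that this approach does not give the singular vector when $h_I/c_{L,I}\in-\mathbb{Z}_{\geq 0}$). Two further steps are asserted without justification: that $U(\HVir)v$ is free over $U(\HVir^{-})$, which is what the factor $(1-q^{p})$ in the character formula requires; and that the quotient is irreducible --- for this it is not enough to exclude further singular vectors of $V^{\HVir}$, one must exclude singular vectors of the quotient $V^{\HVir}/U(\HVir)v$ (subsingular vectors of $V^{\HVir}$), which again requires the form computation you have not performed. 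You do gesture at the right statements, but every load-bearing step is deferred.
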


In this paper we shall first present an explicit formula for  singular
vectors in the case $h_{I}/c_{L,I}-1=p\geq1$, and then a partial formula in the case $1-h_{I}/c_{L,I}=p\geq1$.

Recall that the Schur polynomials $S_{r}(x_{1},x_{2},\cdots)$ in variables
$x_{1},x_{2},\cdots$ are defined by the following equation:
$$
\exp\left(  \sum_{n=1}^{\infty}\frac{x_{n}}{n}y^{n}\right)  =\sum
_{r=0}^{\infty}S_{r}(x_{1},x_{2},\cdots)y^{r}.
$$

\begin{theorem}
Assume that $c_{L,I}\neq0$ and $p=\frac{h_{I}}{c_{L,I}}-1\in{\mathbb{Z}}_{>0}$.
Then
\begin{equation}
S_{p}(-\frac{I(-1)}{c_{L,I}},-\frac{I(-2)}{c_{L,I}},\cdots)v_{h,h_{I}}
\label{sing-hvir1}%
\end{equation}
is a singular vector of conformal weight $p$ in the Verma module $V^{\HVir}$.
\end{theorem}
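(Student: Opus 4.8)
The plan is to reduce the claim to a single generating-function identity. Write $v=v_{h,h_I}$ and introduce the operator
$$
\Omega(y)=\exp\Bigl(-\frac{1}{c_{L,I}}\sum_{m\geq 1}\frac{I(-m)}{m}y^m\Bigr),
$$
so that, by the defining equation of the Schur polynomials, the candidate vector $v_p$ is exactly the coefficient of $y^p$ in $\Omega(y)v$. Since $S_p$ is homogeneous of degree $p$ once $I(-m)$ is assigned conformal weight $m$ (consistent with $[L(0),I(-m)]=m\,I(-m)$), the vector automatically has conformal weight $p$; hence only the two annihilation conditions $L(n)v_p=0$ and $I(n)v_p=0$ for $n>0$ remain. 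The Heisenberg condition is immediate: at level zero $C_I=0$, so $[I(n),I(-m)]=0$ for all $n,m$, and therefore $I(n)$ with $n>0$ commutes through every factor of $\Omega(y)$ and kills $v$. This is precisely where level zero is essential.

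For the Virasoro modes the key step is to commute $L(n)$ past $\Omega(y)$. Setting $A=\log\Omega(y)=-c_{L,I}^{-1}\sum_{m\geq 1}\frac{I(-m)}{m}y^m$ and using $[L(n),I(-m)]=m\,I(n-m)-\delta_{n,m}(n^2+n)c_{L,I}$, one computes
$$
[L(n),A]=-\frac{1}{c_{L,I}}\sum_{m\geq 1}y^m I(n-m)+(n+1)y^n .
$$
The crucial observation is that $[[L(n),A],A]=0$: the inner bracket is a combination of modes $I(n-m)$ together with a scalar, and these all commute with $A$ exactly because $C_I=0$. Consequently the adjoint expansion of $e^{-A}L(n)e^{A}$ truncates after a single term, yielding the clean relation $L(n)\,\Omega(y)=\Omega(y)\bigl(L(n)+[L(n),A]\bigr)$. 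I expect this truncation, together with correctly tracking the scalar term produced by $C_{L,I}$, to be the only point in the argument requiring genuine care.

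Applying this identity to $v$ and using $L(n)v=0$, $I(k)v=0$ for $k>0$, and $I(0)v=h_Iv$, the modes $I(n-m)$ with $m<n$ drop out, the $m=n$ term contributes $h_I$, and the hypothesis $h_I/c_{L,I}=p+1$ collapses the scalar part to $(n-p)y^n$. Reindexing the surviving $m>n$ terms identifies the remainder with $y^{n+1}\partial_y A$, and since $A$ commutes with $\partial_y A$ we may replace $\Omega(y)\,\partial_y A$ by $\partial_y\Omega(y)$, obtaining
$$
L(n)\,\Omega(y)v=(n-p)\,y^n\,\Omega(y)v+y^{n+1}\,\partial_y\bigl(\Omega(y)v\bigr).
$$
Writing $\Omega(y)v=\sum_{r\geq 0}v_r y^r$, the right-hand side becomes $\sum_{r\geq 0}(r+n-p)\,v_r\,y^{n+r}$, whose coefficient of $y^p$ is $(r+n-p)v_r$ at $r=p-n$, which vanishes (and the coefficient is empty when $p-n<0$). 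Hence $L(n)v_p=0$ for every $n>0$, which, combined with the Heisenberg condition and the weight count above, shows that $v_p$ is a singular vector of conformal weight $p$. Everything beyond the bracket $[L(n),A]$ and the truncation of the exponential conjugation is routine generating-function bookkeeping.
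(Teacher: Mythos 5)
Your argument is correct, but it is genuinely different from the one in the paper. The paper proves this theorem via the free-field realization: it identifies the Verma module with the Fock module $\mathcal{F}_{r,s}$ (Proposition \ref{free-Verma}), produces the singular vector as $Q\,e^{r\alpha+s\beta+\frac{\alpha+\beta}{c_{L,I}}}$ using the screening operator and the standard lattice-vertex-operator formula $e^{\gamma}_{i-1}e^{\delta}=\nu S_{n-i}(\gamma(-1),\gamma(-2),\cdots)e^{\gamma+\delta}$, and then transfers the result back to the Verma module through Lemma \ref{pom-1}. You instead verify the annihilation conditions directly inside the Verma module by conjugating $L(n)$ past the generating series $\Omega(y)=e^{A}$ of the Schur polynomials; the two decisive points --- that $[[L(n),A],A]=0$ because $C_I=0$, and that the surviving terms reassemble into $(n-p)y^n\Omega(y)v+y^{n+1}\partial_y(\Omega(y)v)$ so that the coefficient of $y^{p}$ is $(r+n-p)v_r$ at $r=p-n$, hence zero --- both check out against the relations $[L(n),I(-m)]=mI(n-m)-\delta_{n,m}(n^2+n)C_{LI}$ and $h_I/c_{L,I}=p+1$. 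Your route is more elementary and self-contained (no Fock spaces, no screening operators, no appeal to the structure theory of \cite{Billig} underlying Lemma \ref{pom-1}), and it makes transparent exactly where level zero and the integrality of $p$ enter; what it does not buy is the conceptual payoff the paper extracts from the same computation, namely the identification $L^{\HVir}=\mathrm{Ker}\,Q$, the cosingular-vector filtration, and the intertwining operators used later. The only cosmetic omission is that you should note $v_p\neq 0$, which is immediate since $S_p$ is a nonzero polynomial (its $x_1^p$-coefficient is $1/p!$) and $U(\HVir^-)$ acts freely on the highest weight vector of a Verma module.
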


The proof of this theorem will use a free field realization of the
Heisenberg-Virasoro vertex algebra, the screening operator
$$
Q=\mbox{Res}_{z}Y(e^{\frac{\alpha+\beta}{-c_{L,I}}},z)
$$
and the vertex-operator formulas in lattice vertex algebras (cf. Section
\ref{free}).

Unfortunately this approach does not provide an explicit realization of the
singular vector in the case $h_{I}/c_{L,I}\in-{\mathbb{Z}}_{\geq0}$. But in this case we will use the free-field realization to present a cohomological
realization of the  irreducible highest weight modules. We will show that
the irreducible highest weight module $L^{\HVir}(c_{L},0,c_{L,I},h,h_{I})$ can be realized as a submodule of a suitably chosen Fock space $\mathcal{F}_{r,s}$ such that
$$
L^{\HVir} (c_{L},0,c_{L,I},h,h_{I})=\mbox{Ker}_{\mathcal{F}_{r,s}}Q.
$$

We apply our construction to the following two (closely related) problems in the representation theory of the twisted Heisenberg-Virasoro algebra $\HVir$:
\begin{itemize}
\item[(A)] The irreducibility of tensor products: $V'_{\alpha, \beta, F} \otimes L^{\HVir} (c_{L},0,c_{L,I},h,h_{I})$, where
$V'_{\alpha, \beta, F}$ is an irreducible $\HVir$--module from the intermediate series.
\item[(B)] The determination of fusion rules for irreducible $\HVir$--modules from the vertex-algebraic point of view.
\end{itemize}
Problem (A) was posed in \cite{R1} (see also \cite{LZ}) but for the complete solution one needs explicit formulas for singular vectors. These formulas are obtained in the present paper, so in Theorems \ref{-+} and \ref{red-case2} we present the complete solution to  problem (A).

For  problem (B), in Theorem \ref{classif} we determine the fusion rules for simple highest weight modules
$$   L^{\HVir}(c_L, 0, c_{L,I}, h, h_I,  ) \quad \mbox{
such that} \quad  h_I /c_{L,I} -1 \in {\Z} \setminus \{ 0 \}.$$
It is important to notice that all non-trivial intertwining operators from  the fusion rules formula are explicitly constructed  using our free field realization.

Our approach can be also used for the determination of singular vectors in  the
Lie algebra $W(2,2)$ (cf. Theorem \ref{singular-w22}). We present  a free-field realization of $W(2,2)$  which enables us to  show that the Verma module $V^{\HVir}$ has the structure of a Verma module over the $W(2,2)$--algebra. Therefore, the singular vector (\ref{sing-hvir1}) becomes a singular vector over the $W(2,2)$--algebra.

\section{ Free-field realization of the  Heisenberg-Virasoro vertex algebra
and screening operators}
\label{free}

In this section we shall present a free field realization of the  Heisenberg-Virasoro vertex algebra $L^{\HVir%
}(c_{L},c_{L,I})$ at level zero. We shall realize $L^{\HVir%
}(c_{L},c_{L,I})$ as a subalgebra of the Heisenberg vertex algebra $M(1)$ generated by two Heisenberg fields. This will imply that every $M(1)$--module $M(1, \gamma)$ is a module for the vertex algebra $L^{\HVir%
}(c_{L},c_{L,I})$ and therefore for the twisted Heisenberg-Virasoro algebra $\HVir$. We completely describe the structure of $M(1, \gamma)$ as a $\HVir$--module. We will also construct screening operators which will be very useful for the determination of singular vectors.

A free--field realization of certain $\HVir$--modules  also appeared in \cite{JJ}. The main difference with our approach is in the fact  that we use screening operators to construct singular and cosingular vectors.

\vskip 5mm

Define the following hyperbolic lattice $L= {\mathbb{Z}} {\alpha} +
{\mathbb{Z}} {\beta}$ such that
$$
\langle\alpha, \alpha\rangle= - \langle\beta, \beta\rangle= 1, \ \langle
\alpha, \beta\rangle= 0.
$$
Let ${\frak h} = {\Bbb C} \otimes_{\Z} L$ and extend the form $\langle \cdot, \cdot \rangle $ on ${\frak h}$.
We can consider ${\frak h}$ as an abelian Lie algebra. Let $\widehat{\frak h}= {\Bbb C}[t,t^{-1}]  \otimes {\frak h} \oplus {\Bbb C} c$ be the affinization of ${\frak h}$. Let $\gamma \in {\frak h}$ and consider $\widehat{\frak h}$--module
$$ M(1, \gamma) := U(\widehat{\frak h}) \otimes _{ U ( {\Bbb C} [t] \otimes {\frak h} \oplus {\Bbb C} c ) } {\Bbb C}  $$
where $ t {\Bbb C}   [t] \otimes {\frak h} $ acts trivially on ${\Bbb C}$, $ {\frak h}$ acts as $ \langle \delta , \gamma \rangle $ for $\delta \in {\frak h} $ and $c$ acts as $1$.
We shall denote the highest weight vector in $M(1, \gamma)$ by $e ^{\gamma}$.

We shall write $M(1)$ for $M(1,0)$.  For $h \in {\frak h}$ and $n \in {\Bbb Z}$ we write $h(n)$ for $t ^n \otimes h$. Set $h(z) = \sum_{n \in {\Bbb Z} } h(n)  z ^{-n-1}$. Then $M(1)$ is a vertex algebra generated by the fields $h(z)$, $h \in {\frak h}$. Moreover, $M(1, \gamma)$ for $\gamma \in {\frak h}$, are irreducible $M(1)$--modules.

 Let $V_{L} = M(1) \otimes{\mathbb{C}}[L] $  be the vertex algebra associated to the lattice $L$, where ${\mathbb{C}}[L] $ is the group algebra of $L$.
Details about lattice vertex algebras can be found in \cite{K}, \cite{LL}.
The lattice vertex algebra $V_L$ appeared in \cite{Billig2} in the study
of toroidal vertex algebras.

Define the Heisenberg and Virasoro vectors
\bea &&  I  = \alpha(-1) + \beta (-1) \label{Heisenberg} \\
&& \omega =\frac{1}{2} \alpha(-1) ^2 - \frac{1}{2} \beta (-1) ^2 + \lambda \alpha(-2)  + \mu \beta (-2).  \label{Virasoro} \eea
Then the components of the fields
$$
I(z)=Y(I,z)=\sum_{n\in{\mathbb{Z}}}I(n)z^{-n-1},\quad L(z)=Y(\omega
,z)=\sum_{n\in{\mathbb{Z}}}L(n)z^{-n-2}%
$$
satisfy the commutation relations for the twisted Heisenberg-Virasoro Lie
algebra $\HVir$ such that
\begin{equation}
c_{L}=2-12(\lambda^{2}-\mu^{2}),\ c_{L,I}=\lambda-\mu. \label{par-1}%
\end{equation}
From (\ref{par-1}) it follows that
$$ \lambda = \frac{2-c_L}{24 c_{L,I} } + \frac{1}{2} c_{L,I}, \ \mu =  \frac{2-c_L}{24 c_{L,I} } - \frac{1}{2} c_{L,I}. $$
In particular, $I$ and $\omega$ generate the simple Heisenberg-Virasoro vertex
algebra $L^{\HVir} (c_{L},0,c_{L,I},0,0)$ which we shall denote by $L^{\HVir%
}(c_{L},c_{L,I})$.

Note that for every $r,s\in{\mathbb{C}}$  $e^{r\alpha+s\beta}$ is a singular
vector for the twisted Heisenberg-Virasoro Lie algebra $\HVir$ in $M(1, r\alpha+s\beta)$   and
$U(\HVir).e^{r\alpha+s\beta}$ is a highest weight module with the highest
weight $(h,h_{I})$ where
\begin{equation}
h=\Delta_{r,s}=\frac{1}{2}r^{2}-\frac{1}{2}s^{2}-\lambda r+\mu s,\ h_{I}=r-s.
\label{par-2}%
\end{equation}

One can easily see that for every $(h,h_{I})\in{\mathbb{C}}^{2}$, $h_I \ne c_{L,I}$ there exists a unique $(r,s)\in{\mathbb{C}}^{2}$ such that (\ref{par-2}) holds. More precisely, we have:

\begin{proposition} \label{self-dual}
\item[(1)] Let $(h, h_I) \in {\Bbb C} ^2$,  $h_I \ne c_{L,I}$. Then there exist unique $r, s \in {\Bbb C}$ such that $e ^{r \alpha + s \beta}$ is a highest weight vector of the highest weight $(h, h_I)$.
\item[(2)] For every $r,s \in {\Bbb C}$ such that $r-s = \lambda-\mu = c_{L,I}$ $e^{r\alpha +s \beta}$ is a highest weight vector of weight
$$ (h, h_I)= (\frac{c_L-2}{24}, c_{L,I}). $$
    \end{proposition}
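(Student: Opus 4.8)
The plan is to reduce both statements to solving the system of equations (\ref{par-2}) for the pair $(r,s)$. The remark preceding the proposition already guarantees that for \emph{any} $r,s\in{\Bbb C}$ the vector $e^{r\alpha+s\beta}$ is a highest weight vector, with weight $(h,h_I)=(\Delta_{r,s},\,r-s)$. Thus the proposition is purely a statement about when, and how uniquely, the map $(r,s)\mapsto(h,h_I)$ can be inverted.

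First I would eliminate one variable. From $h_I=r-s$ I set $r=s+h_I$ and substitute into $\Delta_{r,s}=\frac12 r^2-\frac12 s^2-\lambda r+\mu s$. The crucial observation is that the quadratic terms collapse: since $\frac12(s+h_I)^2-\frac12 s^2=s\,h_I+\frac12 h_I^2$, the expression $\Delta_{r,s}$ becomes \emph{linear} in $s$, namely
$$
h=s(h_I-\lambda+\mu)+\tfrac12 h_I^2-\lambda h_I=s(h_I-c_{L,I})+\tfrac12 h_I^2-\lambda h_I,
$$
using $\lambda-\mu=c_{L,I}$. This linearity is exactly what makes the inverse single-valued rather than two-valued, and it is a reflection of the hyperbolic (signature $(1,1)$) nature of the lattice $L$.

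For part (1), since $h_I\neq c_{L,I}$ the coefficient $h_I-c_{L,I}$ is nonzero, so the displayed equation has the unique solution $s=(h-\tfrac12 h_I^2+\lambda h_I)/(h_I-c_{L,I})$, after which $r=s+h_I$ is determined. This gives both existence and uniqueness at once.

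For part (2), the hypothesis $r-s=c_{L,I}$ means $h_I=c_{L,I}$, so the coefficient of $s$ vanishes and $h$ is forced to the constant value $\tfrac12 c_{L,I}^2-\lambda c_{L,I}$, independent of $s$. The only remaining task is the routine simplification of this constant: expanding with $c_{L,I}=\lambda-\mu$ gives $-\tfrac12(\lambda^2-\mu^2)$, and substituting $c_L=2-12(\lambda^2-\mu^2)$ from (\ref{par-1}) yields $\frac{c_L-2}{24}$. I do not expect any genuine obstacle here; the one point deserving care is tracking the cancellation of the quadratic terms, which is what simultaneously forces uniqueness in (1) and the $s$-independence of $h$ in (2).
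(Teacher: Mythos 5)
Your proof is correct and follows essentially the same route the paper intends: the paper reduces the proposition to inverting the map $(r,s)\mapsto(\Delta_{r,s},r-s)$ from (\ref{par-2}) and leaves the computation as an easy verification, which is exactly the substitution $r=s+h_I$ and the resulting linear equation in $s$ that you carry out. Your observation that the quadratic terms cancel (making the inverse unique in case (1) and forcing $h=\frac{c_L-2}{24}$ in case (2)) is the whole content of the argument.
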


\begin{remark}
It is very interesting that our free-field realization in the case $h_I=c_{L,I}$ gives only a realization of highest weight module with $h=(c_L -2) / 24$.
\end{remark}

 In particular, if we take $r=s=-\frac{1}{c_{L,I}}$ we get that $u=e^{-\frac
{\alpha+\beta}{c_{L,I}}}$ is a highest weight vector of highest weight $(1,0)$.

Let
$$
Q= \mbox{Res}_{z} Y(u,z) = u_{0}.
$$

The vertex operator $Y(u,z)$ acts in the following sense.

Let $\phi=-\frac{\alpha+\beta}{c_{L,I}}$ and $D$ be the lattice $D={\mathbb{Z}%
}\phi$. Then one can show that
$$
\overline{V}_{D}=M(1)\otimes{\mathbb{C}}[D]
$$
is a vertex algebra (cf. \cite{BDT}). The vertex
operator $Y(u,z)$ and its component $Q$ are well-defined on every
$\overline{V}_{D}$--module. Let $D^{0}$ denote the dual lattice of $D$. In our case
$$
\lambda\in D^{0}\quad\iff\quad\langle\lambda,\alpha+\beta\rangle\in
{c_{L,I}\mathbb{Z}}.
$$
Then for every $\lambda\in D^{0}$
$$
\overline{V}_{\lambda+D}=M(1)\otimes{\mathbb{C}}[\lambda+D]
$$
is a $\overline{V}_{D}$--module (cf. \cite{BDT},  \cite{LL}, \cite{K} ).

\begin{remark}
The representation theory of the vertex algebra $\overline{V}_{D}$ was developed in \cite{BDT} and \cite{LW}. This vertex algebra also appeared  in \cite{Billig2}, \cite{BF} where it was denoted by $V_{Hyp}^+$.
\end{remark}

\begin{lemma}

$$
\lbrack Q,L(n)]=[Q,I(n)]=0\quad\forall n\in{\mathbb{Z}}.
$$
Therefore, $Q$ is a screening operator which commutes with the action of
$\HVir$.
\end{lemma}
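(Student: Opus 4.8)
The plan is to reduce everything to a single structural fact: by the computation preceding the lemma (taking $r=s=-\frac{1}{c_{L,I}}$), the vector $u=e^{-\frac{\alpha+\beta}{c_{L,I}}}$ generating $Q=u_0$ is a highest weight vector of highest weight $(h,h_I)=(1,0)$, i.e.\ a primary vector of conformal weight $1$ and $I$-charge $0$. Explicitly this means $L(m)u=0=I(m)u$ for $m\geq 1$, together with $L(0)u=u$ and $I(0)u=0$. I would then invoke the standard mechanism by which the zero mode of such a vector becomes a screening operator. In the vertex-algebra mode notation $Y(a,z)=\sum_m a_m z^{-m-1}$ one has $L(n)=\omega_{n+1}$ and $I(n)=I_n$, while $Q=u_0$, and I would keep careful track of this weight-two shift throughout.

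For the Virasoro part I would apply the Borcherds commutator formula $[a_m,b_k]=\sum_{j\geq 0}\binom{m}{j}(a_jb)_{m+k-j}$ with $a=\omega$, $b=u$, $m=n+1$, $k=0$, giving
$$[L(n),Q]=[\omega_{n+1},u_0]=\sum_{j\geq 0}\binom{n+1}{j}(\omega_j u)_{n+1-j}.$$
Since $\omega_j u=L(j-1)u$, the primarity of $u$ kills every term except $j=0$ (yielding $L(-1)u$) and $j=1$ (yielding $L(0)u=u$), so that
$$[L(n),Q]=(L(-1)u)_{n+1}+(n+1)\,u_n.$$
The translation-covariance identity $Y(L(-1)u,z)=\partial_z Y(u,z)$ gives $(L(-1)u)_{n+1}=-(n+1)u_n$, and the two contributions cancel, proving $[Q,L(n)]=0$. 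The cancellation is forced precisely by the weight-one condition $L(0)u=u$.

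The Heisenberg part is shorter. The same commutator formula yields
$$[I(n),Q]=[I_n,u_0]=\sum_{j\geq 0}\binom{n}{j}(I_j u)_{n-j},$$
and every summand vanishes because $I_j u=I(j)u=0$ for all $j\geq 0$: for $j\geq 1$ this is primarity, and for $j=0$ it is exactly the vanishing of the $I$-charge $h_I=0$. Hence $[Q,I(n)]=0$.

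I expect no genuine obstacle here; the computation is routine once the setup is fixed. The only point requiring care is the bookkeeping with mode conventions (the shift $L(n)=\omega_{n+1}$ against $I(n)=I_n$ and $Q=u_0$) and the correct application of the $L(-1)$-derivative property, since a sign error there would spoil the cancellation. Conceptually the statement is the familiar principle that the zero mode of a charge-zero, weight-one primary field commutes with both the Virasoro and Heisenberg currents and therefore defines a screening operator for $\HVir$.
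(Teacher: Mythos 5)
Your proposal is correct and follows essentially the same route as the paper: the paper also invokes the commutator formula together with the facts $L(0)u=u$, $L(n)u=0$ for $n\ge 1$, and $I(n)u=0$ for $n\ge 0$, obtaining $[L(n),u_m]=-mu_{n+m}$ (which vanishes at $m=0$) and the commutativity of the modes of $I(z)$ with those of $Y(u,z)$. Your version merely makes explicit the cancellation between the $(L(-1)u)_{n+1}$ term and $(n+1)u_n$ via the translation-covariance identity, which the paper leaves implicit.
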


\begin{proof}
Since
$$
L(0)u=u,\ L(n)u=0\quad\mbox{for}\ n\geq1
$$
using commutator formulas for vertex operators  we get
$$
\lbrack L(n),u_{m}]=-mu_{n+m}%
$$
which implies that $Q$ commutes with the action of the Virasoro algebra. Since
$$
I(n)u=0\quad\forall n\geq0,
$$
the commutator formula implies that the components of the fields $I(z)$ and $Y(u,z)$ commute. The proof follows.
\end{proof}

Since $Q$ commutes with the action of the twisted Heisenberg-Virasoro Lie
algebra, then for every $j \in{\mathbb{Z} _{> 0} }$, $Q ^{j} e^{r \alpha+ s
\beta}$ is either zero or a singular vector. Moreover, $Q$ is preserving the gradation, so if $Q ^{j} e^{r \alpha+ s
\beta} \ne 0$, then $U(\HVir). e^{r \alpha+ s
\beta}$ and $U(\HVir). Q ^j e^{r \alpha+ s
\beta}$ are highest weight $\HVir$--modules having same highest weight.

Denote by $\mathcal{F}_{r,s}$ the $M(1)$--module generated by $e^{r\alpha
+s\beta}$. Define $c_{L},c_{L,I},h_{I},h$ as above. We can consider
$\mathcal{F}_{r,s}$ as a module for the Heisenberg-Virasoro vertex algebra
$L^{\HVir}(c_{L},c_{L,I})$ (and therefore for the twisted
Heisenberg-Virasoro Lie algebra $\HVir$). Clearly, $U(\HVir
).e^{r\alpha+s\beta}$ is a highest weight $\HVir$--module. Let $v_{h,h_{I}}$ be the highest-weight vector in the Verma module $V^{\HVir}(c_{L},0,c_{L,I},h,h_{I})$. There is a surjective $\HVir$--homomorphism
$$
\Phi:V^{\HVir} (c_{L},0,c_{L,I},h,h_{I})\rightarrow U(\HVir).e^{r\alpha+s\beta
},\ \Phi(v_{h,h_{I}})=e^{r\alpha+s\beta}.
$$
Let
$$
\mathcal{I}={\mathbb{C}}[I(-1),I(-2),\cdots]v_{h,h_{I}}.
$$
By construction we know that $\Phi|\mathcal{I}$ is injective. We have:

\begin{proposition}
\label{free-Verma} Assume that
$$
\frac{h_{I}}{c_{L,I}}-1\notin-{\mathbb{Z}_{>0}}.
$$
Then $\mathcal{F}_{r,s}\cong V^{\HVir} (c_{L},0,c_{L,I},h,h_{I})$ as $L^{\HVir%
}(c_{L},c_{L,I})$--modules.
\end{proposition}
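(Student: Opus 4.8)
The plan is to exploit the fact that both modules carry the \emph{same} graded character and to upgrade the surjection $\Phi$ into an isomorphism. Since $V^{\HVir}$ has the PBW basis of ordered monomials in $L(-n),I(-n)$ ($n\geq 1$) applied to $v_{h,h_I}$, while $\mathcal{F}_{r,s}=M(1,r\alpha+s\beta)$ has the basis of monomials in $\alpha(-n),\beta(-n)$ applied to $e^{r\alpha+s\beta}$, both characters equal $q^{h}\prod_{j\geq 1}(1-q^{j})^{-2}$. Consequently, once we know that $\Phi$ is \emph{onto} all of $\mathcal{F}_{r,s}$, i.e. that $U(\HVir).e^{r\alpha+s\beta}=\mathcal{F}_{r,s}$, a surjection of graded spaces with finite-dimensional, equal-dimensional weight spaces is forced to be injective, hence an isomorphism. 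Thus the whole problem reduces to the generation statement $U(\HVir).e^{r\alpha+s\beta}=\mathcal{F}_{r,s}$, and this is exactly where the hypothesis on $h_I/c_{L,I}$ must enter.

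To analyse this I would first pass to the null coordinates adapted to the hyperbolic form. The Heisenberg field corresponds to the null vector $\alpha+\beta$ (indeed $\langle\alpha+\beta,\alpha+\beta\rangle=0$, which is why $C_I=0$), and I introduce the dual null field $J(n):=\alpha(n)-\beta(n)$, so that $[I(m),J(n)]=2m\delta_{m+n,0}$ and $\mathcal{F}_{r,s}$ is freely generated from $e^{r\alpha+s\beta}$ by the $I(-n)$ and $J(-n)$. In these coordinates the Virasoro vector becomes
$$
\omega=\tfrac12 I(-1)J(-1)+\tfrac{\lambda+\mu}{2}I(-2)+\tfrac{c_{L,I}}{2}J(-2),
$$
the crucial point being that the coefficient of $J(-2)$ equals $c_{L,I}/2\neq 0$. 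A direct mode computation then gives, for $n\geq 1$,
$$
L(-n)e^{r\alpha+s\beta}=\Bigl(\tfrac12\sum_{j=1}^{n-1}I(-j)J(-(n-j))+(\ast)I(-n)+\tfrac{c_{L,I}}{2}\bigl(\tfrac{h_I}{c_{L,I}}+n-1\bigr)J(-n)\Bigr)e^{r\alpha+s\beta},
$$
and more generally, since $I(0)$ acts as the scalar $h_I$ on all of $\mathcal{F}_{r,s}$, the coefficient of $J(-n)$ when $L(-n)$ is applied to any monomial is this same scalar $\tfrac{c_{L,I}}{2}(\tfrac{h_I}{c_{L,I}}+n-1)$. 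This scalar is nonzero for every $n\geq 1$ precisely when $\tfrac{h_I}{c_{L,I}}-1\notin-\mathbb{Z}_{>0}$, which is our hypothesis; this is the sole place the assumption is used.

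With this in hand the generation statement follows by induction. The subalgebra $\mathcal{I}=\mathbb{C}[I(-1),I(-2),\dots]v_{h,h_I}$ already provides all monomials with no $J$-factor, and the identity above lets us solve for $J(-n)Y$ from $L(-n)Y$ once $Y$, of strictly smaller conformal weight, is known to lie in $U(\HVir).e^{r\alpha+s\beta}$. I would run a main induction on the conformal weight $N$, and within a fixed weight an auxiliary induction on the pair (number of $J$-factors, sorted multiset of $J$-indices). Writing a weight-$N$ monomial with a $J$-factor as $X=J(-n)Y$ with $n$ the largest $J$-index, the vector $L(-n)Y$ contributes $c\,J(-n)Y$ with $c\neq 0$, plus correction terms that are all strictly simpler: the $I(-n)$-terms are $I(-n)$ times an element already in the submodule; the contractions of a positive mode against $Y$ lower the number of $J$-factors; and the genuinely quadratic cross terms $I(-j)J(-(n-j))Y$ keep the conformal weight and the number of $J$-factors but strictly decrease the sorted multiset of $J$-indices (since $n-j<n$). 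Solving for $X=J(-n)Y$ then places it in $U(\HVir).e^{r\alpha+s\beta}$.

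The main obstacle, and the step deserving the most care, is exactly this bookkeeping of the cross terms $I(-j)J(-(n-j))Y$: they live in the same conformal weight \emph{and} carry the same number of $J$'s as the target $X$, so neither of the two obvious gradings alone makes them ``lower''. The fix is to refine the order by the sorted multiset of $J$-indices, under which these cross terms are strictly smaller; verifying that this yields a well-founded order compatible with all three types of correction terms is the technical heart of the argument. Everything else — the character identity and the passage from surjectivity to isomorphism — is then formal.
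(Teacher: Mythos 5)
Your proof is correct, but it runs in the opposite direction from the paper's. The paper first establishes \emph{injectivity} of $\Phi$ by invoking Billig's structure theory of the Verma module: when $h_I/c_{L,I}-1\notin\mathbb{Z}$ the Verma module is irreducible, and when $h_I/c_{L,I}-1=p\in\mathbb{Z}_{>0}$ every singular vector lies in $\mathcal{I}$, on which $\Phi$ is visibly injective, so $\ker\Phi$ (which would have to contain a singular vector) must vanish; surjectivity onto $\mathcal{F}_{r,s}$ then follows from the character identity. You instead prove \emph{surjectivity} directly --- that $U(\HVir).e^{r\alpha+s\beta}=\mathcal{F}_{r,s}$ --- by an explicit oscillator computation in the null coordinates $I=\alpha+\beta$, $J=\alpha-\beta$, and deduce injectivity from the same character identity. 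I checked your key computations: $\omega=\tfrac12 I(-1)J(-1)+\tfrac{\lambda+\mu}{2}I(-2)+\tfrac{c_{L,I}}{2}J(-2)$ is right, the coefficient of $J(-n)Y$ in $L(-n)Y$ is indeed the universal scalar $\tfrac{c_{L,I}}{2}\bigl(\tfrac{h_I}{c_{L,I}}+n-1\bigr)$ (only the $j=0$ normal-ordered term and the $J(-2)$-derivative term contribute to that particular monomial), and its nonvanishing for all $n\geq1$ is exactly the hypothesis. Your double induction also closes: the contraction terms (types with one positive mode) strictly lower the number of $J$-factors, and the cross terms $I(-j)J(-(n-j))Y$ replace the maximal $J$-index $n$ by $n-j<n$, so they are strictly smaller in the lexicographic order on (number of $J$'s, decreasingly sorted $J$-index multiset), which is well-founded within a fixed conformal weight. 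What each approach buys: the paper's argument is five lines but imports the classification of singular vectors from \cite{Billig} (a result the paper is partly re-deriving later via this very realization), whereas yours is longer but entirely self-contained and makes transparent where the excluded values $h_I/c_{L,I}-1\in-\mathbb{Z}_{>0}$ come from --- they are precisely the $n$ for which $L(-n)$ fails to produce $J(-n)$, which is consistent with the cosingular-vector picture of Proposition \ref{struktura}.
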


\begin{proof}
First we assume that $\frac{h_{I}}{c_{L,I}}-1\notin{\mathbb{Z}}.$ Then
$V^{\HVir} (c_{L},0,c_{L,I},h,h_{I})$ is an irreducible $\HVir$--module, and
since its $q$--character coincides with the $q$--character of $\mathcal{F}%
_{r,s}$, we get $\mathcal{F}_{r,s}\cong V^{\HVir}(c_{L},0,c_{L,I},h,h_{I})$.

Assume that $\frac{h_{I}}{c_{L,I}}-1=p\in{\mathbb{Z}_{>0}}$. Then the results
of \cite{Billig} easily imply that every singular vector in the Verma
$V^{\HVir} (c_{L},0,c_{L,I},h,h_{I})$ must belong to $\mathcal{I}$. Assume that
$\mbox{Ker}(\Phi)\neq0$. Then the fact that $\Phi|\mathcal{I}$ is injective
implies that $V^{\HVir}(c_{L},0,c_{L,I},h,h_{I})$ contains a singular vector which
does not belong to $\mathcal{I}$. This is in contradiction with \cite{Billig}.
So $\mbox{Ker}(\Phi)=0$ and therefore $V^{\HVir}(c_{L},0,c_{L,I},h,h_{I})\cong
U(\HVir).e^{r\alpha+s\beta}\subseteq\mathcal{F}_{r,s}$. Now the equality
of $q$--characters implies the statement.
\end{proof}

Recall the definition of a contragradient module from \cite{FHL}. Assume
that $V$ is a vertex operator algebra, $(M,Y_{M})$ is a graded $V$--module
such that $M=\oplus_{n=0}^{\infty}M(n)$ and that there is $\gamma
\in{\mathbb{C}}$ such that $L(0)|M(n)\equiv(\gamma+n)\mbox{Id}$, $\dim
M(n)<\infty$. The contragradient module $M^{\ast}$ is defined as follows. For every $n\in{\mathbb{Z}_{>0}}$ let $M(n)^{\ast}$ be the dual vector space and
$M^{\ast}=\oplus_{n=0}^{\infty}M(n)^{\ast}$. Consider the natural pairing
$\langle\cdot,\cdot\rangle:M^{\ast}\otimes M\rightarrow\mathbb{C}$. Define the linear map $Y_{M^{\ast}}:V\rightarrow\mbox{End}M^{\ast}[[z,z^{-1}]$ such that
$$
\langle Y_{M^{\ast}}(v,z)w^{\prime},w\rangle=\langle w^{\prime},Y_{M}%
(e^{zL(1)}(-z^{-2})^{L(0)}v,z^{-1})w\rangle
$$
for each $v\in V$, $w\in M$, $w^{\prime\ast}$. Then $(M^{\ast},Y_{M^{\ast}})$
carries the structure of a $V$--module.

Let us now consider the case $M=\mathcal{F}_{r,s}$. We get
\begin{align}
\langle L(n)w^{\prime},w\rangle &  =\langle w^{\prime},L(-n)w\rangle
\nonumber\\
\langle Qw^{\prime},w\rangle=-  &  \langle w^{\prime},Qw\rangle\nonumber\\
\langle\alpha(n)w^{\prime},w\rangle &  =\langle w^{\prime},(-\alpha
(-n)+2\lambda\delta_{n,0})w\rangle\nonumber\\
\langle\beta(n)w^{\prime},w\rangle &  =\langle w^{\prime},(-\beta
(-n)-2\mu\delta_{n,0})w\rangle\nonumber\\
\langle I(n)w^{\prime},w\rangle &  =\langle w^{\prime},(-I(-n)+2c_{L,I}%
\delta_{n,0})w\rangle.\nonumber
\end{align}

We get the following result:

\begin{lemma}
We have:

\item[(1)]
$$
\mathcal{F}_{r,s} ^{*} \cong\mathcal{F}_{2 \lambda-r, 2 \mu- s }.
$$

\item[(2)]
$$
L^{\HVir} (c_{L},0,c_{L,I},h,h_{I})^{\ast}\cong L^{\HVir} (c_{L},0,c_{L,I},h,-h_{I}+2c_{L,I})
$$

\end{lemma}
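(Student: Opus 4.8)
The plan is to prove both isomorphisms by the same two-step recipe: locate the lowest-degree vector of the contragradient module, verify directly from the displayed adjoint formulas that it is a highest weight vector of the predicted weight, and then upgrade ``contains the right highest weight vector'' to ``is the named module'' via an appropriate rigidity statement. For part (1), I would first recall that $\mathcal{F}_{r,s}=M(1,r\alpha+s\beta)$ is an irreducible module for the rank-two (nondegenerate) Heisenberg algebra $\widehat{\frak h}$, so its contragradient $\mathcal{F}_{r,s}^{*}$ is again irreducible over $\widehat{\frak h}$. Let $w_{0}'$ span the lowest graded piece $\mathcal{F}_{r,s}^{*}(0)$, dual to $e^{r\alpha+s\beta}$. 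Since the contragradient construction preserves the $L(0)$-grading, each positive mode $\alpha(n),\beta(n)$ ($n>0$) strictly lowers degree on $\mathcal{F}_{r,s}^{*}$ and hence annihilates $w_{0}'$. The degree-zero adjoint formulas then give $\alpha(0)w_0'=(2\lambda-r)w_0'$ and $\beta(0)w_0'=(s-2\mu)w_0'$, that is $\delta(0)w_0'=\langle\delta,\gamma'\rangle w_0'$ for all $\delta\in{\frak h}$, where $\gamma'=(2\lambda-r)\alpha+(2\mu-s)\beta$. As irreducible Fock modules are pinned down by their highest weight, this yields $\mathcal{F}_{r,s}^{*}\cong M(1,\gamma')=\mathcal{F}_{2\lambda-r,\,2\mu-s}$; and because the whole $\HVir$-action is built from $\alpha,\beta$, the identification is one of $L^{\HVir}(c_L,c_{L,I})$-modules.

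For part (2) I would run the same argument, substituting Heisenberg irreducibility with the standard fact (from the contragradient theory of \cite{FHL}) that the contragradient of an irreducible module is irreducible. Write $L=L^{\HVir}(c_L,0,c_{L,I},h,h_I)=\bigoplus_{n\ge0}L(n)$ and let $w_0'$ span $L^{*}(0)=L(0)^{*}$. Grading preservation again forces $L(m)w_0'=I(m)w_0'=0$ for all $m>0$, so $w_0'$ is a highest weight vector; the adjoint formulas give $L(0)w_0'=hw_0'$ and $I(0)w_0'=(-h_I+2c_{L,I})w_0'$, while the central elements $C_L,C_I,C_{L,I}$ are untouched. Thus $L^{*}$ is irreducible, is generated by the highest weight vector $w_0'$, and carries highest weight $(c_L,0,c_{L,I},h,-h_I+2c_{L,I})$, which forces $L^{*}\cong L^{\HVir}(c_L,0,c_{L,I},h,-h_I+2c_{L,I})$.

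The only genuinely delicate point is this last passage from ``has a highest weight vector of the correct weight'' to ``is isomorphic to the named module.'' In part (2) it is immediate, since irreducibility makes $w_0'$ a cyclic highest weight generator. In part (1), however, $\mathcal{F}_{r,s}$ may fail to be irreducible as an $\HVir$-module (it can carry singular vectors, as in Proposition \ref{free-Verma}), so one cannot argue directly over $\HVir$; the resolution is to exploit that $\mathcal{F}_{r,s}$ is nonetheless irreducible over the Heisenberg algebra $\widehat{\frak h}$, so that its dual is a bona fide Fock module determined by $\gamma'$. Once this structural subtlety is handled, all remaining content is the bookkeeping already recorded in the displayed list of adjoints, so no additional computation is required.
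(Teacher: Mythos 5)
Your proposal is correct and follows essentially the same route as the paper: the paper states this lemma immediately after the displayed adjoint formulas for $L(n)$, $\alpha(n)$, $\beta(n)$, $I(n)$ on the contragradient module, and the intended proof is exactly the routine verification you carry out (read off the weight of the dual lowest-weight vector and match it against $\mathcal{F}_{2\lambda-r,2\mu-s}$, resp.\ against the irreducible highest weight module). Your remark that part (1) must be settled at the level of $M(1)$-modules, since $\mathcal{F}_{r,s}$ need not be $\HVir$-irreducible, is a correct and worthwhile clarification of a point the paper leaves implicit.
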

Recall that a vector $v$ in a $\HVir$--module $M$ is called cosingular (or subsingular) if there is a submodule $N \subset M$ such that $v+ N$ is a singular vector in the quotient module $M/N$.
\begin{proposition}
\label{struktura}
Assume that
$$
\frac{h_{I}}{c_{L,I}}-1=-p,\quad p\in{\mathbb{Z}_{>0}}.
$$
As a $L^{\HVir}(c_{L},c_{L,I})$--module $\mathcal{F}_{r,s}$ is generated
by $e^{r\alpha+s\beta}$ and a family of cosingular vectors $\{v_{n,p}%
:n\geq1\}$, such that
$$
Q^{n}v_{n,p}=e^{r\alpha+s\beta-n\frac{\alpha+\beta}{c_{L,I}}}.
$$
In particular,
$$
L^{\HVir}(c_{L},0,c_{L,I},h,h_{I})=\mbox{Ker}_{\mathcal{F}_{r,s}}Q.
$$
\end{proposition}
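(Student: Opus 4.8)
The plan is to organize all the shifted Fock spaces into one family and study $Q$ as a map between consecutive members. Write $\gamma=r\alpha+s\beta$ and $\phi=-\frac{\alpha+\beta}{c_{L,I}}$, and for $k\in\Z$ let $\mathcal{F}^{(k)}:=\mathcal{F}_{r-k/c_{L,I},\,s-k/c_{L,I}}$, the Fock space generated by $e^{\gamma+k\phi}$. Since $Y(u,z)$ raises the lattice momentum by $\phi$, the screening $Q=u_0$ restricts to $Q:\mathcal{F}^{(k)}\to\mathcal{F}^{(k+1)}$, and as $u$ has conformal weight $1$ it preserves the $L(0)$--grading and commutes with $\HVir$ (screening Lemma). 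Using $\langle\phi,\phi\rangle=0$ and $\lambda-\mu=c_{L,I}$ one checks that $I(0)$ acts by the same $h_I$ on every $\mathcal{F}^{(k)}$, while $e^{\gamma+k\phi}$ has conformal weight $h+kp$ (here $h_I/c_{L,I}=1-p$). In particular each $\mathcal{F}^{(k)}$ again satisfies $h_I/c_{L,I}-1=-p$, so I would prove the two assertions for $\mathcal{F}^{(0)}=\mathcal{F}_{r,s}$ together with their analogues for all $k$.

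The engine is the explicit form of $Q$ on a highest weight vector. The lattice vertex-operator formula turns $E^-(-\phi,z)$ into the generating series of the Schur polynomials $S_r(-I(-1)/c_{L,I},-I(-2)/c_{L,I},\dots)$ appearing in $(\ref{sing-hvir1})$. Acting on $e^{\gamma+k\phi}$, where all positive modes vanish, gives $Y(u,z)e^{\gamma+k\phi}=\epsilon\,z^{\langle\phi,\gamma\rangle}\sum_{r\ge0}S_r(\dots)z^r\,e^{\gamma+(k+1)\phi}$ with $\langle\phi,\gamma\rangle=p-1\ge0$; extracting the residue forces the Schur index $r=-p<0$, whence $Qe^{\gamma+k\phi}=0$. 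Thus $e^\gamma\in\mathrm{Ker}_{\mathcal{F}_{r,s}}Q$, and since $Q$ commutes with $\HVir$, $U(\HVir)e^\gamma\subseteq\mathrm{Ker}_{\mathcal{F}_{r,s}}Q$. The same computation in the contragredient (``$+$'') situation instead produces the nonzero residue $S_p(\dots)$, which feeds the next step.

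The central step is surjectivity of $Q:\mathcal{F}^{(k)}\to\mathcal{F}^{(k+1)}$, which I would obtain by duality. By the contragredient Lemma, $(\mathcal{F}^{(k)})^{*}\cong\mathcal{F}_{2\lambda-(r-k/c_{L,I}),\,2\mu-(s-k/c_{L,I})}$ has $h_I$--value $2c_{L,I}-h_I$, hence satisfies $h_I/c_{L,I}-1=p>0$, so by Proposition \ref{free-Verma} it is a Verma module. On such a ``$+$''--Verma $Q$ is injective: a nonzero graded submodule would contain a minimal-weight, hence singular, vector; by \cite{Billig} together with $(\ref{sing-hvir1})$ every singular vector lies in $\mathbb{C}[I(-1),I(-2),\dots]e^{\gamma^{*}}$; and since $[Q,I(-n)]=0$ and $Qe^{\gamma^{*}}=\epsilon\,S_p(\dots)e^{\gamma^{*}+\phi}\neq0$, freeness of the $I(-n)$--action forces $Q$ to be nonzero there, a contradiction. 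Because $Q$ is, up to sign, the transpose of $Q$ on the contragredient family (anti-self-adjointness in the contragredient Lemma), injectivity on the ``$+$''--side yields surjectivity of $Q:\mathcal{F}^{(k)}\to\mathcal{F}^{(k+1)}$. A character count then closes the kernel assertion: from surjectivity and $\mathrm{char}\,\mathcal{F}^{(k)}=q^{h+kp}\prod_{j\ge1}(1-q^{j})^{-2}$ one gets $\mathrm{char}\,\mathrm{Ker}(Q|_{\mathcal{F}^{(k)}})=q^{h+kp}(1-q^{p})\prod_{j\ge1}(1-q^{j})^{-2}=\mathrm{char}\,L^{\HVir}$; since $U(\HVir)e^{\gamma+k\phi}\subseteq\mathrm{Ker}(Q|_{\mathcal{F}^{(k)}})$ is a highest weight module that must surject onto the irreducible $L^{\HVir}$, the characters squeeze and $\mathrm{Ker}(Q|_{\mathcal{F}^{(k)}})=U(\HVir)e^{\gamma+k\phi}\cong L^{\HVir}$. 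For $k=0$ this is the ``in particular'' statement.

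Finally I would produce the $v_{n,p}$ and prove generation via the filtration $K_n:=\mathrm{Ker}(Q^{n}|_{\mathcal{F}_{r,s}})$. As $Q$ preserves conformal weight while shifting into spaces whose internal degrees are bounded below, $Q^{n}$ annihilates any fixed vector once $np$ exceeds its degree, so $0=K_0\subset K_1\subset\cdots$ exhausts $\mathcal{F}_{r,s}$, with $K_1=\mathrm{Ker}_{\mathcal{F}_{r,s}}Q=U(\HVir)e^\gamma$ by the previous step. Since $Q^{n}$ is surjective (a composite of surjections), I may pick $v_{n,p}\in\mathcal{F}_{r,s}$ with $Q^{n}v_{n,p}=e^{\gamma+n\phi}=e^{r\alpha+s\beta-n\frac{\alpha+\beta}{c_{L,I}}}$; then $v_{n,p}\in K_{n+1}\setminus K_n$, and because $Q^{n}$ induces an injection $K_{n+1}/K_n\hookrightarrow\mathcal{F}^{(n)}$ with image $\mathrm{Ker}(Q|_{\mathcal{F}^{(n)}})=U(\HVir)e^{\gamma+n\phi}$, the class of $v_{n,p}$ generates $K_{n+1}/K_n$ and is singular modulo $K_n$, so $v_{n,p}$ is cosingular. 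Inductively $K_{n+1}=U(\HVir)\{e^\gamma,v_{1,p},\dots,v_{n,p}\}$, and passing to the union yields $\mathcal{F}_{r,s}=U(\HVir)\{e^\gamma,v_{n,p}:n\ge1\}$. The main obstacle is the surjectivity step: upgrading the transparent ``$+$''--case injectivity to ``$-$''--case surjectivity requires correctly installing the contragredient pairing across the whole shifted family $\{\mathcal{F}^{(k)}\}$ so that $Q$ and its transpose are identified up to sign, and confining the singular vectors of the ``$+$''--Vermas to the Heisenberg part so that $[Q,I(-n)]=0$ can be exploited; a direct attempt to write $v_{n,p}$ explicitly (say as a multiple of $(\alpha(-p)-\beta(-p))^{n}e^\gamma$) meets Schur-polynomial correction terms and is best avoided in favor of the duality argument.
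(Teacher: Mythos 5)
Your proof is correct and follows essentially the same route as the paper's: both rest on the contragredient module $\mathcal{F}_{r,s}^{*}\cong\mathcal{F}_{2\lambda-r,2\mu-s}$ (where the $h_I$--parameter flips to the ``$+$''--side and the Fock space is a Verma module on which $Q$ acts nontrivially via Schur polynomials), the anti-self-adjointness of $Q$ under the contragredient pairing to transport this back to $\mathcal{F}_{r,s}$, and the character identity to identify $\mathrm{Ker}\,Q^{n}/\mathrm{Ker}\,Q^{n-1}$ with the irreducible modules. The only difference is organizational: you make the surjectivity of $Q:\mathcal{F}^{(k)}\to\mathcal{F}^{(k+1)}$ explicit and choose $v_{n,p}$ as a $Q^{n}$--preimage, whereas the paper produces $v_{n,p}$ directly from nondegeneracy of the pairing with the singular vector $Q^{n}(e^{r\alpha+s\beta+n\phi})^{*}$.
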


\begin{proof}
First recall the following formula for characters of irreducible modules (cf.
\cite{Billig}):
$$
ch_{q}L^{\HVir}(c_{L},0,c_{L,I},h+np,h_{I})=q^{h+np}(1-q^{p}%
)\prod_{j\geq1}(1-q^{j})^{-2}.
$$
This implies
\begin{equation}
ch_{q}\mathcal{F}_{r,s}=q^{h}\prod_{j\geq1}(1-q^{j})^{-2}=\sum_{n\geq0}%
ch_{q}L^{\HVir}(c_{L},0,c_{L,I},h+np,h_{I}). \label{decomp}%
\end{equation}
Let $\phi=-\frac{\alpha+\beta}{c_{L,I}}$. Consider now the contragradient
module $(\mathcal{F}_{r,s})^{\ast}\cong\mathcal{F}_{2\lambda-r,2\mu-s}$. Then
$$
Q^{n}(e^{r\alpha+s\beta+n\phi})^{\ast}%
$$
is a nontrivial singular vector in the contragradient module $(\mathcal{F}%
_{r,s})^{\ast}$ which implies that there is a vector $v_{n,p}$ of weight
$h+np$ such that
$$
\langle Q^{n}(e^{r\alpha+s\beta+n\phi})^{\ast},v_{n,p}\rangle\neq0.
$$

Since
$$
\langle Q^{n}(e^{r\alpha+s\beta+n\phi})^{\ast},v_{n,p}\rangle\ =(-1)^{n}%
\langle(e^{r\alpha+s\beta+n\phi})^{\ast},Q^{n}v_{n,p}\rangle
$$
we have that $Q^{n}v_{n,p}$ is proportional to $e^{r\alpha+s\beta+n\phi}$. Let
$Z_{n}=\mbox{Ker}_{\mathcal{F}_{r,s}}Q^{n}$. Then $Z_{n}$ is a non-trival
submodule of $\mathcal{F}_{r,s}$ (in particular, $v_{n-1,p}\in Z_{n}$). We
have that
$$
v_{n,p}+Z_{n}\quad\mbox{is a singular vector in }\quad\frac{\mathcal{F}_{r,s}%
}{Z_{n}}.
$$
Using character identity (\ref{decomp}) we get the following filtration:
\bea
\label{formula} \mathcal{F}_{r,s}=\cup_{n\geq0}Z_{n},\quad Z_{n}/Z_{n-1}\cong L^{\HVir%
}(c_{L},0,c_{L,I},h+np,h_{I}).
\eea
The proof follows.
\end{proof}

\begin{remark}
A version of Proposition \ref{struktura} appeared in Theorem 2.4 of \cite{JJ}. In some special cases, the authors obtained the filtration (\ref{formula}) by a different method which was not based on screening operators.
\end{remark}

Finally, we shall now describe the structure of $M(1)$ as a
$L^{\HVir}(c_{L},c_{L,I})$--module.

\begin{corollary}
As a $L^{\HVir}(c_{L},c_{L,I})$--module $M(1)$ is generated by
$\mathbf{1}$ and a family of cosingular vectors $\{v_{n}:n\geq1\}$, such that
$$
Q^{n}v_{n}=e^{-n\frac{\alpha+\beta}{c_{L,I}}}.
$$
In particular,
$$
L^{\HVir}(c_{L},c_{L,I})=\mbox{Ker}_{M(1)}Q.
$$

\end{corollary}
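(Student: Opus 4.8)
The plan is to recognize this corollary as the special case of Proposition~\ref{struktura} (equivalently of Proposition~\ref{free-Verma}'s failure) corresponding to $r=s=0$, and then to transport the entire argument verbatim. Observe that $M(1)=M(1,0)=\mathcal{F}_{0,0}$, so the module under consideration is exactly $\mathcal{F}_{r,s}$ with $r=s=0$. For these values the highest weight data computed from (\ref{par-2}) are $h_I=r-s=0$ and $h=\Delta_{0,0}=0$, so the vacuum $\mathbf{1}=e^{0}$ generates the highest weight module $L^{\HVir}(c_L,c_{L,I})=L^{\HVir}(c_L,0,c_{L,I},0,0)$.

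First I would verify that the hypothesis of Proposition~\ref{struktura} holds in this case. Since $c_{L,I}=\lambda-\mu\neq 0$ by assumption and $h_I=0$, we have $\frac{h_I}{c_{L,I}}-1=-1=-p$ with $p=1\in{\mathbb{Z}_{>0}}$. Thus Proposition~\ref{struktura} applies directly with $p=1$, $r=s=0$. I would then simply read off its conclusion: $\mathcal{F}_{0,0}=M(1)$ is generated over $L^{\HVir}(c_L,c_{L,I})$ by $e^{0}=\mathbf{1}$ together with a family of cosingular vectors $\{v_{n,1}:n\geq 1\}$ satisfying
$$
Q^n v_{n,1}=e^{0-n\frac{\alpha+\beta}{c_{L,I}}}=e^{-n\frac{\alpha+\beta}{c_{L,I}}},
$$
and the kernel statement $L^{\HVir}(c_L,0,c_{L,I},0,0)=\mbox{Ker}_{M(1)}Q$. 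Relabelling $v_{n}:=v_{n,1}$ and recalling $L^{\HVir}(c_L,c_{L,I})=L^{\HVir}(c_L,0,c_{L,I},0,0)$ yields precisely the two displayed formulas of the corollary.

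The only point requiring genuine attention—rather than a pure substitution—is the implicit claim that the cosingular vectors $v_n$ can be taken to lie in $M(1)$ itself and are honestly cosingular as elements of $M(1)$. This is automatic here because the filtration (\ref{formula}) specializes, for $p=1$, to $M(1)=\cup_{n\geq 0}Z_n$ with successive quotients $Z_n/Z_{n-1}\cong L^{\HVir}(c_L,0,c_{L,I},np,0)$, and each representative $v_n$ constructed in the proof of Proposition~\ref{struktura} via the pairing with the singular vector $Q^n(e^{n\phi})^{\ast}$ in the contragradient module $(\mathcal{F}_{0,0})^{\ast}\cong\mathcal{F}_{2\lambda,2\mu}$ indeed sits inside $\mathcal{F}_{0,0}=M(1)$. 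I expect no serious obstacle; the essential content is that the abstract Fock-space analysis of Proposition~\ref{struktura}, which was phrased for general $(r,s)$, was designed precisely to cover the vacuum module $M(1)$ as its most basic instance, so the corollary follows immediately once the parameter identification $r=s=0$, $h=h_I=0$, $p=1$ is made explicit.
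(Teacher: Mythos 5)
Your proposal is correct and matches the paper's intent exactly: the corollary is stated without a separate proof precisely because it is the specialization of Proposition \ref{struktura} to $r=s=0$ (so $\mathcal{F}_{0,0}=M(1)$, $h=h_I=0$, $p=1$), which is the identification you carry out. The parameter checks and the observation that the cosingular vectors and the filtration live inside $M(1)$ itself are all as the paper's argument requires.
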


\begin{remark}
It is a standard  technique in the theory of $\mathcal{W}$--algebras and vertex
algebras to realize vertex algebras as kernels of screening operators inside of the  free boson vertex algebras. In particular, in the case of the single boson,
the $\mathcal{W}$--algebra $\mathcal{W}(2,2 p-1)$ and the Virasoro vertex algebra of central charge of $(1,p)$--models are realized as kernels of certain screening
operators (cf. \cite{A-2003}).
\end{remark}

\section{Intertwining operators from the free-field realization}
\label{intertwining}
For a vertex operator algebra $V$ and their modules $M_1, M_2, M_3$, let $${\Y} { M_3 \choose M_1 \ \ \ M_2 } $$ denotes the vector space of all intertwining operators of type
$ { M_3 \choose M_1 \ \ \ M_2 }$ (cf. \cite{FHL}).
In the vertex algebra theory, it is important to construct intertwining operators.
In this section we shall apply the free-field realization from the previous section   to construct
intertwining operators for the twisted Heisenberg-Virasoro Lie algebra ${\HVir}$.

Next we shall use this result to prove reducibility of tensor product of ${\HVir}$--modules
$V_{\alpha,\beta,F}^{\prime} \otimes L^{\HVir}(c_{L},0,c_{L,I},h,h_{I}) $.

Let $V_{\alpha,\beta,F}^{\prime}$ denotes an irreducible $\HVir$-module
from the intermediate series, i.e.,\ $V_{\alpha,\beta,F}^{\prime}%
=\bigoplus\limits_{n\in\mathbb{Z}}\mathbb{C}v_{n}$ with the action
\begin{align*}
L(k)v_{n}  &  =-\left(  n+\alpha+\beta+k\beta\right)  v_{n+k},\\
I(k)v_{n}  &  =Fv_{n+k},\quad\left\{  C_{L},C_{LI},C_{I}\right\}  v_{n}=0.
\end{align*}
Then $V_{\alpha,\beta,F}^{\prime}\otimes L^{\HVir}(c_{L},0,c_{L,I},h,h_{I})$ is a
$\HVir$-module with infinite-dimensional weight spaces (see \cite{R1}).
The tensor product $V_{\alpha,\beta,F}^{\prime}\otimes L^{\HVir}(c_{L},0,c_{L,I},h,h_{I})$ is closely related with intertwining operators for vertex algebras. Let us explain this relation in more details.

Suppose that there exists a non-trivial intertwining operator $\Y$ of the type
$$
{\binom{L^{\HVir}(c_{L},0,c_{L,I},h^{\prime\prime},h_{I}^{\prime\prime
})}{L^{\HVir}(c_{L},0,c_{L,I},h,h_{I})\ \ L^{\HVir}(c_{L}%
,0,c_{L,I},h^{\prime},h_{I}^{\prime})}.}%
$$
Let $v$ be the highest weight vector in $L^{\HVir}(c_{L},0,c_{L,I}%
,h,h_{I}),$ and let $\Y(v,z)=z^{-\alpha}\sum_{n\in{\mathbb{Z}}}v_{n}z^{-n-1}$, where
$$ \alpha = h + h^{\prime} -h^{\prime\prime}.$$
Using commutator formulas for vertex operators, one can show that the
components ${v_{n}} $ of intertwining operator ${\Y}(v,z) $ span a ${\HVir}$--module isomorphic to $V_{\alpha,1-h,h_{I}}^{\prime}.$
In this way we get a non-trivial $\HVir$-homomorphism
$$
\varphi:V_{\alpha,1-h,h_{I}}^{\prime}\otimes L^{\HVir}(c_{L}%
,0,c_{L,I},h^{\prime},h_{I}^{\prime})\rightarrow L^{\HVir}%
(c_{L},0,c_{L,I},h^{\prime\prime},h_{I}^{\prime\prime})
$$%
$$
\varphi\left(  v_{n}\otimes w^{\prime}\right)  =v_{n}w^{\prime}
$$
where $w^{\prime} \in  L^{\HVir}(c_{L},0,c_{L,I},h^{\prime},h_{I}^{\prime})$. Comparing the dimensions of weight spaces, we conclude that
$\ker\varphi$ is nontrivial and, in particular, that $V_{\alpha,1-h,h_{I}}^{\prime
}\otimes L^{\HVir}(c_{L},0,c_{L,I},h^{\prime},h_{I}^{\prime})$ is reducible.

The proof of the following proposition is standard.
\begin{proposition}
\label{hi}
Let
$$
d= \dim {\Y} {\binom{L^{\HVir}(c_{L},0,c_{L,I},h^{\prime\prime},h_{I}^{\prime\prime
})}{L^{\HVir}(c_{L},0,c_{L,I},h,h_{I})\ \ L^{\HVir}(c_{L}%
,0,c_{L,I},h^{\prime},h_{I}^{\prime})}}.
$$
Then $d=0$ or $1$.
If $d=1$, then  $ h_{I}^{\prime\prime} = h_{I}+h_{I}^{\prime} $.
\end{proposition}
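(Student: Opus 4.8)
The plan is to separate the statement into two independent claims: that a non-zero intertwining operator forces $h_I'' = h_I + h_I'$, and that the space of such operators is at most one-dimensional. Both are analyzed through the action of $\Y$ on highest weight vectors. Write $v, w, u$ for the highest weight vectors of $L^{\HVir}(c_L,0,c_{L,I},h,h_I)$, $L^{\HVir}(c_L,0,c_{L,I},h',h_I')$ and $L^{\HVir}(c_L,0,c_{L,I},h'',h_I'')$, and recall the commutator formula for an intertwining operator $\Y$ of the given type: for $a$ in the vertex algebra $L^{\HVir}(c_L,c_{L,I})$ and $x$ in the first module,
$$[a_m, \Y(x,z)] = \sum_{j\ge 0}\binom{m}{j} z^{m-j}\,\Y(a_j x, z),$$
where $a_m$ denotes the mode of $a$ acting on the third module on the left and on the second module on the right. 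I would apply this with $a = I$, so $a_m = I(m)$, and with $a = \omega$, so $a_m = L(m-1)$.

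For the weight condition, first note that $I(0)$ is central in $\HVir$, since $[L(n),I(0)] = -\delta_{n,0}(n^2+n)C_{L,I} = 0$ and $[I(m),I(0)] = m\delta_{m,0}C_I = 0$; hence it acts as the scalars $h_I, h_I', h_I''$ on the three modules. Taking $a = I$ and $x = v$ in the commutator formula and using $I(0)v = h_I v$, $I(j)v = 0$ for $j\ge 1$, everything collapses to $[I(0),\Y(v,z)] = h_I\,\Y(v,z)$. Consequently, for any $w'$ in the second module,
$$h_I''\,\Y(v,z)w' = I(0)\,\Y(v,z)w' = \Y(v,z)\,I(0)w' + h_I\,\Y(v,z)w' = (h_I' + h_I)\,\Y(v,z)w'.$$
Since a non-zero intertwining operator must act non-trivially on the generator $v$, there is some $w'$ with $\Y(v,z)w'\ne 0$, and the equality of $I(0)$-eigenvalues gives $h_I'' = h_I + h_I'$.

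For the bound $d\le 1$, the idea is to show that $\Y$ is pinned down by a single scalar. Expand $\Y(v,z)w = z^{-\alpha}\sum_n (v_n w)\,z^{-n-1}$ with $\alpha = h+h'-h''$. The $L(0)$-bracket property $[L(0),\Y(v,z)] = h\,\Y(v,z) + z\frac{d}{dz}\Y(v,z)$, combined with $L(0)w = h'w$, shows that $v_n w$ has $L(0)$-weight $h'' - n - 1$; it is therefore non-zero only for $n\le -1$, and the leading coefficient $v_{-1}w$ has weight exactly $h''$. As the weight-$h''$ space of the irreducible module $L^{\HVir}(c_L,0,c_{L,I},h'',h_I'')$ is the line $\mathbb{C}u$, we may write $v_{-1}w = \kappa\,u$ for a scalar $\kappa$. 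I would then argue that $\Y\mapsto\kappa$ is injective. The $a=\omega$ commutator formula with $x=v$ gives $[L(m-1),\Y(v,z)] = z^m\frac{d}{dz}\Y(v,z) + m h\,z^{m-1}\Y(v,z)$, and together with the $L(-1)$-derivative property $\Y(L(-1)v,z) = \frac{d}{dz}\Y(v,z)$ these relations, applied to $w$, recover the coefficients $v_n w$ from the leading one $\kappa u$, so that $\Y(v,z)w$ is determined by $\kappa$. Propagating through the $I(m)$- and $L(m)$-commutator formulas (which express $\Y(v,z)w'$ for all $w'$ in the second module, generated from $w$ by $\HVir$) and finally through the iterate formula for intertwining operators (which recovers $\Y(x,z)$ for all $x$ in the first module from $\Y(v,z)$ and the vertex operators of $L^{\HVir}(c_L,c_{L,I})$), one concludes that $\Y$ is completely determined by $\kappa$, whence $d\le 1$.

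The routine ingredients are the commutator and $L(-1)$-derivative properties of intertwining operators and the weight bookkeeping. The step I expect to be the real obstacle is the injectivity of $\Y\mapsto\kappa$: one must verify that the recursion generated by the $L(m)$-relations uniquely reconstructs $\Y(v,z)w$ inside $L^{\HVir}(c_L,0,c_{L,I},h'',h_I'')$ and introduces no extra freedom, and that the propagation to the full operator is consistent. This is exactly where the highest weight and irreducibility structure of the three modules is indispensable, ensuring both the one-dimensionality of the target weight-$h''$ space and the determinacy of the reconstruction.
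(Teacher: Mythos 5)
Your proposal is correct and follows essentially the same route as the paper: the identity $h_I''=h_I+h_I'$ is read off from the $I(0)$-eigenvalue on the leading coefficient of $\Y(v,z)$ applied to the second module, and $d\le 1$ comes from the fact that $\Y$ is determined up to scalar by that leading coefficient, which lands in the one-dimensional top space of the irreducible target. The only difference is that the paper outsources the uniqueness step to Li's Proposition 7.3.5 rather than sketching the reconstruction, so the step you flag as the ``real obstacle'' is exactly what that citation covers.
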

\begin{proof}
Let $v$, $v'$ and $v''$  be  highest weight vectors in $L^{\HVir}(c_{L},0,c_{L,I},h,h_{I})$, $L^{\HVir}(c_{L},0,c_{L,I},h',h'_{I})$ and $L^{\HVir}(c_{L},0,c_{L,I},h'',h''_{I})$ respectively. Let $\Y(\cdot, z)$ be the intertwining operator of the above type. Let
$$ \Y(v,z) = \sum _{ n \in \alpha + {\Bbb Z} } v_{n} z ^{-n-1} \quad (\alpha = h+ h' - h'').$$ Then there is $n_0 \in {\alpha} + {\Z}$ such that
\bea \label{jedinstvenost-0} v_{n_0} v' = \nu v'' \quad (\nu \ne 0) \eea
where $n_0 = h + h' - h''$. We have
$$ I(0) (v_{n_0} v') = \nu  (h_I + h'_I) v'' = \nu h''_I v'' . $$
Is it easy to see (for instance see \cite{Li}, Proposition 7.3.5 ) that $\mathcal{Y}$ is uniquely determined (up to a scalar factor) by (\ref{jedinstvenost-0}). The proof follows.
\end{proof}

\vskip 5mm
Now we shall apply this concept on the intertwining operators constructed using the free-field realization.
First we shall use the standard fusion rules result for the Heisenberg vertex algebra $M(1)$.

\begin{proposition}
\label{prop-int-1} Assume that $(r_{1},s_{1}),(r_{2},s_{2}),(r_{3},s_{3}%
)\in\mathbb{C}^{2}$. Then in the category of $M(1)$--modules:
$$
\dim {\Y}{\binom{\mathcal{F}_{r_{3},s_{3}}}{\mathcal{F}_{r_{1},s_{1}%
}\ \ \mathcal{F}_{r_{2},s_{2}}}}=\delta_{r_{1}+r_{2},r_{3}}\delta_{s_{1}%
+s_{2},s_{3}}.
$$
In particular, there exists a unique non-trivial intertwining operator
$\Y(\cdot,z)$ of the type ${\binom{\mathcal{F}_{r_{1}+r_{2},s_{1}+s_{2}}%
}{\mathcal{F}_{r_{1},s_{1}}\ \ \mathcal{F}_{r_{2},s_{2}}}}$ such that
$\Y(v,z)=\sum_{n\in n_{0}+{\mathbb{Z}}}v_{n}z^{-n-1}%
,\ v\in\mathcal{F}_{r_{1},s_{1}}$ and that
$$
(e^{r_{1}\alpha+s_{1}\beta})_{n_{0}-1}e^{r_{2}\alpha+s_{2}\beta}=e^{(r_{1}%
+r_{2})\alpha+(s_{1}+s_{2})\beta}%
$$
where $n_{0}=\Delta_{r_{1},s_{1}}+\Delta_{r_{2},s_{2}}-\Delta_{r_{1}%
+r_{2},s_{1}+s_{2}}$.
\end{proposition}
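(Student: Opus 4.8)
The plan is to regard each $\mathcal{F}_{r,s}=M(1,r\alpha+s\beta)$ as an irreducible Fock module for the rank-two Heisenberg vertex algebra $M(1)$ and to prove the fusion rule in two independent stages: a charge-conservation constraint that forces the two Kronecker deltas, together with an explicit construction and a reconstruction argument that pin the remaining dimension to exactly $1$ in the matching case.

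First I would establish that the fusion coefficient vanishes unless the charges add. The zero modes $\alpha(0)$ and $\beta(0)$ act as the scalars $\langle\alpha,r\alpha+s\beta\rangle=r$ and $\langle\beta,r\alpha+s\beta\rangle=-s$ on all of $\mathcal{F}_{r,s}$. For any intertwining operator $\mathcal{Y}$ of the given type and $v=e^{r_1\alpha+s_1\beta}$, the commutator formula for intertwining operators gives $[\alpha(0),\mathcal{Y}(v,z)]=\mathcal{Y}(\alpha(0)v,z)=r_1\mathcal{Y}(v,z)$, and similarly for $\beta(0)$, because $h(j)v=0$ for $j\geq 1$. Applying this to $e^{r_2\alpha+s_2\beta}$ shows that every coefficient of $\mathcal{Y}(v,z)e^{r_2\alpha+s_2\beta}$ is an $\alpha(0)$-eigenvector of eigenvalue $r_1+r_2$ and a $\beta(0)$-eigenvector of eigenvalue $-(s_1+s_2)$. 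Since these zero modes act as $r_3$ and $-s_3$ on $\mathcal{F}_{r_3,s_3}$, a nonzero $\mathcal{Y}$ forces $r_3=r_1+r_2$ and $s_3=s_1+s_2$; this produces the factor $\delta_{r_1+r_2,r_3}\,\delta_{s_1+s_2,s_3}$ and reduces everything to the matching case.

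Assume now that the charges match. For existence ($\dim\geq 1$) I would invoke the standard free-boson construction: define $\mathcal{Y}(e^{r_1\alpha+s_1\beta},z)$ as the normally ordered exponential vertex operator composed with the momentum-shift operator and the factor $z^{\gamma_1(0)}$ (which produces the fractional $z$-powers indexed by $n_0+\mathbb{Z}$), extend it to all of $\mathcal{F}_{r_1,s_1}$ by the iterate formula, and verify the Jacobi identity; this is standard in the theory of lattice and generalized vertex algebras (cf. \cite{K}, \cite{LL}, \cite{BDT}, \cite{LW}), here carried out for complex momenta. For uniqueness ($\dim\leq 1$) I would argue that any $\mathcal{Y}$ is determined by a single scalar. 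By the $L(0)$-grading, $v_n e^{r_2\alpha+s_2\beta}$ has conformal weight $\Delta_{r_1,s_1}+\Delta_{r_2,s_2}-n-1$, which lies below the minimal weight $\Delta_{r_3,s_3}$ of $\mathcal{F}_{r_3,s_3}$ when $n\geq n_0$ and equals it exactly when $n=n_0-1$, where $n_0=\Delta_{r_1,s_1}+\Delta_{r_2,s_2}-\Delta_{r_1+r_2,s_1+s_2}$. Hence $v_n e^{r_2\alpha+s_2\beta}=0$ for $n\geq n_0$ and $v_{n_0-1}e^{r_2\alpha+s_2\beta}=\nu\,e^{(r_1+r_2)\alpha+(s_1+s_2)\beta}$ for some scalar $\nu$. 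The commutator relation $[h(m),\mathcal{Y}(v,z)]=\langle h,\gamma_1\rangle z^m\mathcal{Y}(v,z)$ then shows that the positive Heisenberg modes act on the series $\mathcal{Y}(v,z)e^{r_2\alpha+s_2\beta}$ as a coherent state, so its lower-order coefficients are recursively determined by $\nu$; the iterate formula propagates this to $\mathcal{Y}(u,z)w$ for arbitrary $u\in\mathcal{F}_{r_1,s_1}$ and $w\in\mathcal{F}_{r_2,s_2}$, since both Fock modules are generated from their lowest weight vectors by the Heisenberg creation operators. This is the uniqueness assertion of \cite{Li}, Proposition 7.3.5, specialized to the present setting.

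The main obstacle is the reconstruction step underlying uniqueness: one must check that the coherent-state recursion driven by the modes $h(m)$, $m\geq 1$, together with the leading coefficient $\nu$, consistently and unambiguously determines every coefficient of $\mathcal{Y}(v,z)e^{r_2\alpha+s_2\beta}$ as a vector of $\mathcal{F}_{r_3,s_3}$; this rests on the irreducibility of the Fock modules, i.e.\ on the nondegeneracy of the pairing obtained by applying annihilation operators and reading off the lowest weight component. Once existence and this bound are both in place, the fusion space has dimension exactly $1$ in the matching case, and normalizing $\nu=1$ yields the distinguished operator satisfying $(e^{r_1\alpha+s_1\beta})_{n_0-1}e^{r_2\alpha+s_2\beta}=e^{(r_1+r_2)\alpha+(s_1+s_2)\beta}$, as claimed.
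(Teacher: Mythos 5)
Your proposal is correct, and it supplies in full the argument that the paper omits: the paper states Proposition \ref{prop-int-1} without proof, invoking it as ``the standard fusion rules result for the Heisenberg vertex algebra $M(1)$.'' Your three ingredients --- charge conservation from the zero modes $\alpha(0),\beta(0)$ forcing the Kronecker deltas, the explicit exponential vertex operator for existence, and determination of $\Y$ by the single top matrix coefficient $\nu$ via the Heisenberg commutator recursion (i.e.\ \cite{Li}, Proposition 7.3.5, which the paper itself uses for Proposition \ref{hi}) --- are exactly the standard facts the authors are relying on, so your route coincides with theirs.
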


By restricting the above intertwining operators we get the following
intertwining operator in the category of $\HVir$--modules.

\begin{proposition}
\label{prop-int-2} Assume that
$$
(h,h_{I})=(\Delta_{r_{1},s_{1}},r_{1}-s_{1}),(h^{\prime},h_{I}^{\prime
})=(\Delta_{r_{2},s_{2}},r_{2}-s_{2})\in\mathbb{C}^{2}%
$$
such that
$$
\frac{h_{I}}{c_{L,I}}-1,\frac{h_{I}^{\prime}}{c_{L,I}}-1,\frac{h_{I}%
+h_{I}^{\prime}}{c_{L,I}}-1\ \notin{\mathbb{Z}}_{>0}.
$$
Then there is a non-trivial intertwining operator of the type
$$
{\binom{L^{\HVir}(c_{L},0,c_{L,I},h^{\prime\prime},h_{I}+h_{I}^{\prime
})}{L^{\HVir}(c_{L},0,c_{L,I},h,h_{I})\ \ L^{\HVir}(c_{L}%
,0,c_{L,I},h^{\prime},h_{I}^{\prime})}}%
$$
where
\begin{multline*}
h^{\prime\prime}=\Delta_{r_{1}+r_{2},s_{1}+s_{2}}=
\left(  h_{I}^{\prime}+h_{I}-c_{L,I}\right)  \left(  \frac{h^{\prime}}%
{h_{I}^{\prime}-c_{L,I}}+\frac{h}{h_{I}-c_{L,I}}\right) \\ - h_{I}h_{I}^{\prime
}\left(  \frac{1}{h_{I}^{\prime}-c_{L,I}}+\frac{1}{h_{I}-c_{L,I}}\right)
\frac{c_{L}-2}{24c_{L,I}}%
\end{multline*}

In particular, the $\HVir$--module $V_{\alpha
,\beta,F}^{\prime}\otimes L^{\HVir}(c_{L},0,c_{L,I},h',h'_{I})$ is reducible where
$$\alpha = h + h' - h'', \ \beta = 1-h, \ F = h_I.$$
\end{proposition}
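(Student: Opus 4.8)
The plan is to obtain the required operator by restricting, along the screening operator $Q$, the $M(1)$--intertwining operator produced in Proposition \ref{prop-int-1}. Put $r_3=r_1+r_2$ and $s_3=s_1+s_2$; then $(h'',h_I'')=(\Delta_{r_3,s_3},r_3-s_3)=(\Delta_{r_1+r_2,s_1+s_2},h_I+h_I')$, so the target automatically carries $I(0)$--weight $h_I+h_I'$, consistent with Proposition \ref{hi}. The first point is that, under the three hypotheses $\frac{h_I}{c_{L,I}}-1,\frac{h_I'}{c_{L,I}}-1,\frac{h_I+h_I'}{c_{L,I}}-1\notin\mathbb{Z}_{>0}$, each simple module is realized inside the relevant Fock space as the kernel of $Q$. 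Indeed, if $\frac{h_I}{c_{L,I}}-1\in-\mathbb{Z}_{>0}$ this is Proposition \ref{struktura}; and if $\frac{h_I}{c_{L,I}}\notin\mathbb{Z}$ the Fock space is already an irreducible Verma module by Proposition \ref{free-Verma}, on which $Q=0$, because $Q$ is an $\HVir$--homomorphism $\mathcal{F}_{r,s}\to\mathcal{F}_{r-\frac{1}{c_{L,I}},s-\frac{1}{c_{L,I}}}$ between simple modules whose lowest conformal weights $\Delta_{r,s}$ and $\Delta_{r,s}-(\frac{h_I}{c_{L,I}}-1)$ differ. Hence $L^{\HVir}(c_L,0,c_{L,I},h,h_I)=\mathrm{Ker}_{\mathcal{F}_{r_1,s_1}}Q$, and likewise for the other two modules.

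The core step is to show that the operator $\mathcal{Y}$ of Proposition \ref{prop-int-1} --- which is already intertwining for $L^{\HVir}(c_L,c_{L,I})\subset M(1)$, hence for $\HVir$ --- sends $\mathrm{Ker}\,Q\otimes\mathrm{Ker}\,Q$ into $\mathrm{Ker}\,Q$. I would deduce this from the commutator of the screening with the intertwining operator. Viewing $Q=\mathrm{Res}_zY(u,z)$ and $\mathcal{Y}$ inside the lattice modules $\overline{V}_{\lambda+D}$, the standard commutator formula for the zero mode of a weight one field gives
\begin{equation}
Q\,\mathcal{Y}(v,z)-\mathcal{Y}(v,z)\,Q=\mathcal{Y}(Qv,z),
\label{plan-der}
\end{equation}
whence $Qv=Qw=0$ forces $Q\bigl(v_nw\bigr)=0$ for all $n$, i.e.\ $v_nw\in\mathrm{Ker}\,Q$. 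The identity (\ref{plan-der}) is valid exactly when $Y(u,\cdot)$ and $\mathcal{Y}(v,\cdot)$ are mutually local, i.e.\ when $\langle\phi,r_1\alpha+s_1\beta\rangle=-\frac{h_I}{c_{L,I}}\in\mathbb{Z}$; this is precisely the case $\frac{h_I}{c_{L,I}}\in\mathbb{Z}$ in which $Q\neq0$, while in the complementary irreducible case $\mathrm{Ker}\,Q$ is the whole Fock space and no restriction is needed. Non-triviality of the restricted operator is then clear, since the highest weight vectors $e^{r_i\alpha+s_i\beta}$ lie in the respective kernels and Proposition \ref{prop-int-1} normalizes $\mathcal{Y}$ by $(e^{r_1\alpha+s_1\beta})_{n_0-1}e^{r_2\alpha+s_2\beta}=e^{r_3\alpha+s_3\beta}\neq0$.

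It remains to record the explicit value of $h''$ and to deduce the reducibility statement. Expressing $r_i,s_i$ through $(h,h_I)$ and $(h',h_I')$ by inverting the parametrization (\ref{par-2}) as in Proposition \ref{self-dual}(1), substituting into $\Delta_{r_1+r_2,s_1+s_2}$, and simplifying with (\ref{par-1}) and $\lambda-\mu=c_{L,I}$ yields the displayed formula for $h''$; this is a direct, if lengthy, computation that I would not expand here. Finally, the reducibility of $V'_{\alpha,\beta,F}\otimes L^{\HVir}(c_L,0,c_{L,I},h',h_I')$ with $\alpha=h+h'-h''$, $\beta=1-h$, $F=h_I$ follows verbatim from the discussion preceding Proposition \ref{hi}: the components of the intertwining operator just constructed span a submodule isomorphic to $V'_{\alpha,1-h,h_I}$, producing a nonzero homomorphism $\varphi:V'_{\alpha,1-h,h_I}\otimes L^{\HVir}(c_L,0,c_{L,I},h',h_I')\to L^{\HVir}(c_L,0,c_{L,I},h'',h_I+h_I')$ whose kernel is nontrivial by comparing the (infinite-dimensional) dimensions of weight spaces.

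The step I expect to be genuinely delicate is the commutation (\ref{plan-der}), specifically the mixed configurations in which $\frac{h_I}{c_{L,I}},\frac{h_I'}{c_{L,I}}\notin\mathbb{Z}$ while $\frac{h_I+h_I'}{c_{L,I}}\in\{0,-1,-2,\dots\}$: here modules one and two coincide with their Fock spaces but the target module three is a proper submodule $\mathrm{Ker}\,Q\subsetneq\mathcal{F}_{r_3,s_3}$, and the monodromy $-\frac{h_I}{c_{L,I}}\notin\mathbb{Z}$ obstructs the naive derivation identity. To close this case uniformly I would pass to the contragredient duality $\mathcal{F}_{r,s}^{\ast}\cong\mathcal{F}_{2\lambda-r,2\mu-s}$, or attempt to show directly that the image of the restricted operator equals the simple bottom piece $\mathrm{Ker}\,Q$ of the filtration (\ref{formula}); the all-integer case, which is the one relevant to the fusion-rule application of Theorem \ref{classif}, is already fully covered by (\ref{plan-der}).
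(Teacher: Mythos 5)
The paper's own ``proof'' of this proposition is the single sentence preceding it (``By restricting the above intertwining operators\dots''), so you are supplying details the authors leave implicit. Your reduction is the natural one, and in the configurations you actually close it is correct: when all three ratios $h_I/c_{L,I}$, $h_I'/c_{L,I}$, $(h_I+h_I')/c_{L,I}$ are integers, the screening commutator $[Q,\mathcal{Y}(v,z)]=\mathcal{Y}(Qv,z)$ is legitimate (all monodromy exponents $\langle\phi,\gamma_i\rangle$ are integral) and forces the image of $\mathrm{Ker}\,Q\otimes \mathrm{Ker}\,Q$ into $\mathrm{Ker}\,Q$; and when the target Fock space is irreducible no corestriction is needed. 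These are exactly the cases invoked later (Theorem \ref{uniq}, Theorem \ref{classif}), and the ``in particular'' reducibility claim needs no corestriction at all, since any nonzero $\varphi$ into a module with finite-dimensional weight spaces has nontrivial kernel. One small slip: for $h_I=c_{L,I}$ the Fock space is irreducible but is \emph{not} $\mathrm{Ker}\,Q$ (there $Qe^{r\alpha+s\beta}=\pm e^{r\alpha+s\beta+\phi}\neq 0$), so your blanket assertion that each simple module is realized as $\mathrm{Ker}\,Q$ should read ``either the Fock space is already irreducible, or it equals $\mathrm{Ker}\,Q$''.

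The mixed case you flag is a genuine gap, and neither of your proposed repairs can close it, because the restricted operator provably does \emph{not} map into the bottom piece there. Take $\frac{h_I}{c_{L,I}}\notin\mathbb{Z}$ and $h_I+h_I'=0$, so $p''=1$ and $L^{\HVir}(c_L,0,c_{L,I},h'',h_I'')=\mathrm{Ker}_{\mathcal{F}_{r_3,s_3}}Q\subsetneq \mathcal{F}_{r_3,s_3}$. Writing $\gamma_i=r_i\alpha+s_i\beta$, the component $(e^{\gamma_1})_{n_0-2}e^{\gamma_2}=\pm\,\gamma_1(-1)e^{\gamma_3}$ lies in the image of the restricted operator, while $Q\,\gamma_1(-1)e^{\gamma_3}=-\langle\gamma_1,\phi\rangle\, e^{\phi}_{-1}e^{\gamma_3}=\pm\frac{h_I}{c_{L,I}}e^{\gamma_3+\phi}\neq 0$ since $h_I\neq 0$. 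So the image escapes $\mathrm{Ker}\,Q$, and ``the image equals the simple bottom piece of the filtration'' is false. The contragredient route fares no better: the adjoint operator would carry $L^{\HVir}(c_L,0,c_{L,I},h'',-h_I''+2c_{L,I})$, a proper quotient of the Verma module $\mathcal{F}_{2\lambda-r_3,2\mu-s_3}$, in its second slot, and the operator of Proposition \ref{prop-int-1} does not descend to that quotient (for $p''=1$ one has $\mathcal{Y}(e^{\gamma_1},z)I(-1)e^{\gamma}=(I(-1)-h_Iz^{-1})\mathcal{Y}(e^{\gamma_1},z)e^{\gamma}\neq 0$ on the generator $I(-1)e^{\gamma}$ of the maximal submodule). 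Hence in the mixed configuration the restriction strategy --- which is all the paper offers --- cannot produce an operator whose target is the irreducible module; one must either give a genuinely different construction or weaken the target to $\mathcal{F}_{r_1+r_2,s_1+s_2}$. Since the later applications use only the all-integer case and the reducibility consequence, nothing downstream breaks, but you should state explicitly that your proof establishes the proposition only outside this mixed configuration.
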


\begin{corollary}
\label{intertw}Assume that
$$
(h,h_{I})=(\Delta_{r_{1},s_{1}},r_{1}-s_{1}),(h^{\prime},h_{I}^{\prime
})=(\Delta_{r_{2},s_{2}},r_{2}-s_{2})\in\mathbb{C}^{2}%
$$
and that there are $p,q\in{\mathbb{Z}}_{>0}$, $q\leq p$ such that
$$
\frac{h_{I}}{c_{L,I}}-1=-q,\ \frac{h_{I}^{\prime}}{c_{L,I}}-1=p.
$$
Then there is a non-trivial intertwining operator of the type
$$
{\binom{L^{\HVir}(c_{L},0,c_{L,I},h^{\prime\prime},h_{I}+h_{I}^{\prime
})}{L^{\HVir}(c_{L},0,c_{L,I},h,h_{I})\ \ L^{\HVir}(c_{L}%
,0,c_{L,I},h^{\prime},h_{I}^{\prime})}}%
$$
where
\begin{multline*}
h^{\prime\prime}=\Delta_{r_{2}-r_{1},s_{2}-s_{1}}=
\left(  h_{I}^{\prime}-h_{I}-c_{L,I}\right)  \left(  \frac{h^{\prime}}%
{h_{I}^{\prime}-c_{L,I}}-\frac{h}{h_{I}-c_{L,I}}\right) \\ +h_{I}\left(
\frac{h_{I}^{\prime}}{h_{I}^{\prime}-c_{L,I}}-\frac{2h_{I}-h_{I}^{\prime}%
}{h_{I}-c_{L,I}}\right)  \frac{c_{L}-2}{24c_{L,I}}=\\
=\left(  p+q-1\right)  \left(  \frac{h^{\prime}}{p}+\frac{h}{q}\right)
+\left(  1-p\right)  \left(  1-q\right)  \left(  \frac{1}{p}+\frac{1}%
{q}\right)  \frac{c_{L}-2}{24}%
\end{multline*}

In particular, the $\HVir$--module $V_{\alpha
,\beta,F}^{\prime}\otimes L^{\HVir}(c_{L},0,c_{L,I},h',h''_{I})$ is reducible where
$$\alpha = h + h' - h'', \ \beta = 1-h, \ F = h_I.$$
\end{corollary}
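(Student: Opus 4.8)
The plan is to produce a single non-trivial intertwining operator of the stated type; once this is in hand, Proposition \ref{hi} simultaneously forces $d=1$ and fixes the lower $\HVir$--weight of the target to be $h_I+h_I'$, so no separate verification of $d\le 1$ is needed. I would first realize all three modules inside Fock spaces. Since $h_I'/c_{L,I}-1=p\in\Z_{>0}$, Proposition \ref{free-Verma} gives $\mathcal F_{r_2,s_2}\cong V^{\HVir}(c_L,0,c_{L,I},h',h_I')$, whose irreducible quotient is the second module; and by Proposition \ref{self-dual} the target weight $(h'',h_I+h_I')$ (with $h_I+h_I'\ne c_{L,I}$) likewise has a Fock realization. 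Since $h_I/c_{L,I}-1=-q\in-\Z_{>0}$, the first module is by Proposition \ref{struktura} the bottom piece $\mathrm{Ker}_{\mathcal F_{r_1,s_1}}Q$ of the filtration (\ref{formula}); equivalently, by the contragredient Lemma its dual $L^{\HVir}(c_L,0,c_{L,I},h,2c_{L,I}-h_I)$ falls in the positive case and is realized as a quotient of $\mathcal F_{2\lambda-r_1,2\mu-s_1}$. It is precisely this passage to the reflected/contragredient realization of the first module that will produce the target conformal weight $\Delta_{r_2-r_1,s_2-s_1}$ in place of the weight $\Delta_{r_1+r_2,s_1+s_2}$ of Proposition \ref{prop-int-2}.

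Next I would construct the operator by starting from the unique non-trivial $M(1)$--intertwining operator of Proposition \ref{prop-int-1} between the relevant Fock spaces, restricting it in the first variable to the $\HVir$--subquotient that realizes $L^{\HVir}(c_L,0,c_{L,I},h,h_I)$ and composing with the projection onto the subquotient realizing the target. The legitimacy of this restriction–projection rests on the fact that the screening $Q$ commutes both with the $\HVir$--action and with the $M(1)$--intertwining operator, both being residues of lattice vertex operators; hence the kernels and images of powers of $Q$ recorded in Proposition \ref{struktura} and in (\ref{formula}) are preserved, and the resulting map is again an intertwining operator. Computing the conformal weight of the target highest weight vector from (\ref{par-2}) and the explicit hyperbolic pairing yields $h''=\Delta_{r_2-r_1,s_2-s_1}$; substituting $r_i-s_i=h_I^{(i)}$ and the formulas for $\lambda,\mu$ then rewrites this as the displayed expression in $p,q,h,h'$, a routine simplification of the same kind already used for Proposition \ref{prop-int-2}.

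The hard part will be non-vanishing: after restricting to $\mathrm{Ker}_Q$ in the first slot and projecting to the correct irreducible target I must check the operator is still non-zero, i.e. that the leading coefficient $v_{n_0}v'$ in the normalization (\ref{jedinstvenost-0}) is a non-zero multiple of the target highest weight vector. This is exactly where the hypothesis $q\le p$ should enter: the Verma module $\mathcal F_{r_2,s_2}$ has its singular vector at level $p$ (given by the Schur polynomial (\ref{sing-hvir1})), while the first module truncates at level $q$ via the cosingular structure of Proposition \ref{struktura}; the inequality $q\le p$ is what guarantees that the relevant matrix coefficient pairing these two truncations does not degenerate, so that the projection of the image to the irreducible target survives. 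I expect this compatibility computation, rather than the formal construction, to be the real content, and that for $q>p$ the corresponding coefficient would vanish.

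Finally, the reducibility of the tensor product follows formally. As in the paragraph preceding Proposition \ref{hi}, the components of $\mathcal Y(v,z)$ for the highest weight vector $v$ span a copy of the intermediate-series module $V'_{\alpha,1-h,h_I}$, giving a non-zero $\HVir$--homomorphism
$$\varphi\colon V'_{\alpha,\beta,F}\otimes L^{\HVir}(c_L,0,c_{L,I},h',h_I')\longrightarrow L^{\HVir}(c_L,0,c_{L,I},h'',h_I+h_I')$$
with $\alpha=h+h'-h''$, $\beta=1-h$, $F=h_I$. Since the source has infinite-dimensional weight spaces while the target has finite-dimensional ones, $\ker\varphi\ne 0$, and therefore the tensor product is reducible.
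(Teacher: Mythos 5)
Your construction has a genuine gap at its core step. You propose to take \emph{the} non-trivial $M(1)$--intertwining operator of Proposition \ref{prop-int-1} ``between the relevant Fock spaces'', restrict the first slot to the subquotient realizing $L^{\HVir}(c_L,0,c_{L,I},h,h_I)$ and project the target onto the desired subquotient. But Proposition \ref{prop-int-1} forces the lattice parameters to add: once the first slot is $\mathcal F_{r_1,s_1}$ (the only Fock space whose lowest weight is $(h,h_I)$, by Proposition \ref{self-dual}) and the second is $\mathcal F_{r_2,s_2}$, the target is necessarily $\mathcal F_{r_1+r_2,s_1+s_2}$, all of whose $\HVir$--subquotients have conformal weights in $\Delta_{r_1+r_2,s_1+s_2}+\Z_{\ge 0}$. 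The target claimed in the Corollary has conformal weight $\Delta_{r_2-r_1,s_2-s_1}$, which generically does not lie in that set (e.g.\ for $q=1$, i.e.\ $h_I=0$, one computes $\Delta_{r_2-r_1,s_2-s_1}-\Delta_{r_1+r_2,s_1+s_2}=2ph$). So no amount of restricting and projecting a single operator of that type can land on the stated module. Your remark that ``the passage to the reflected/contragredient realization of the first module'' produces $\Delta_{r_2-r_1,s_2-s_1}$ does not repair this: the first module is precisely the slot that is \emph{not} dualized, and replacing $\mathcal F_{r_1,s_1}$ by $\mathcal F_{2\lambda-r_1,2\mu-s_1}$ would change its lowest weight to $(h,2c_{L,I}-h_I)\ne(h,h_I)$.

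What the paper actually does is construct a \emph{different} intertwining operator first, namely one of type
$$
{\binom{L^{\HVir}(c_{L},0,c_{L,I},\Delta_{r_{2},s_{2}},-h_{I}^{\prime}+2c_{L,I})}{L^{\HVir}(c_{L},0,c_{L,I},h,h_{I})\ \ L^{\HVir}(c_{L},0,c_{L,I},\cdot\,,-h_{I}-h_{I}^{\prime}+2c_{L,I})}},
$$
in which the second and third modules are the contragredients of the desired second and third modules; for this triple the lattice parameters do add up, and then the \emph{adjoint} intertwining operator (of type ${M_2^*\choose M_1\ M_3^*}$, via the Lemma identifying $L^{\HVir}(\ldots,h,h_I)^*\cong L^{\HVir}(\ldots,h,-h_I+2c_{L,I})$) is exactly the operator asserted in the Corollary. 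Passing to the adjoint is the step that converts the additive Fock-space weight into the one appearing in the statement; your proposal never forms an adjoint operator, so it cannot reach the conclusion. Relatedly, your guess about the role of $q\le p$ (a non-degeneracy of a pairing of ``truncations'') is not where it enters: it is needed so that the $h_I$--parameter $-h_I-h_I'+2c_{L,I}$ of the auxiliary operator's second module has $\frac{-h_I-h_I'+2c_{L,I}}{c_{L,I}}-1=q-p-1$ outside $\Z_{\ge 0}$, i.e.\ so that the earlier free-field constructions apply to all three auxiliary modules (non-vanishing itself is immediate there, since the leading coefficient is $e^{c_1}_{\,n_0-1}e^{c_2}=e^{c_1+c_2}$). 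Your final paragraph deducing reducibility of the tensor product is fine and agrees with the paper.
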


\begin{proof}
By using Proposition \ref{prop-int-1} we get the non-trivial intertwining operator
$\Y(\cdot,z)$ of the type
$$
{\binom{L^{\HVir}(c_{L},0,c_{L,I},\Delta_{r_{2},s_{2}},-h_{I}^{\prime
}+2c_{L,I})}{L^{\HVir}(c_{L},0,c_{L,I},h,h_{I})\ \ L^{\HVir}%
(c_{L},0,c_{L,I},\Delta_{r_{2}-r_{1},s_{2}-s_{1}},-h_{I}-h_{I}^{\prime
}+2c_{L,I})}}.
$$
Then the adjoint intertwining operator $\Y^{\ast}(\cdot,z)$ is a non-trivial
intertwining operator of type
$$
{\binom{L^{\HVir}(c_{L},0,c_{L,I},\Delta_{r_{2}-r_{1},s_{2}-s_{1}}%
,h_{I}+h_{I}^{\prime})}{L^{\HVir}(c_{L},0,c_{L,I},h,h_{I}%
)\ \ L^{\HVir}(c_{L},0,c_{L,I},\Delta_{r_{2},s_{2}},h_{I}^{\prime})}}.
$$
The proof follows.
\end{proof}

\section{Schur polynomials and singular vectors}

In the theory of Fock spaces for the Virasoro algebra, the  singular vectors can be
expressed using Schur polynomials or Jack polynomials (see \cite{S-jack} for relation between Virasoro singular vectors and Jack polynomials). But these expressions don't give a formula for the  Virasoro singular vectors in the Verma modules in the usual PBW basis. In this section we will see that in the case of the twisted Heisenberg-Virasoro algebra, this approach can give explicit formulas for singular vectors in the PBW basis. Our construction shall use the free-field realization of highest weight modules from Section \ref{free}.

Define the Schur polynomials $S_{r}(x_{1},x_{2},\cdots)$ in variables
$x_{1},x_{2},\cdots$ by the following equation:
\begin{equation}
\exp\left(  \sum_{n=1}^{\infty}\frac{x_{n}}{n}y^{n}\right)  =\sum
_{r=0}^{\infty}S_{r}(x_{1},x_{2},\cdots)y^{r}. \label{eschurd}%
\end{equation}

We shall also use the following formula for Schur polynomials:
\begin{equation}
S_{r}(x_{1},x_{2},\cdots)=\frac{1}{r!}\det\left(
\begin{array}
[c]{ccccc}%
x_{1} & x_{2} & \cdots & \  & x_{r}\\
-r+1 & x_{1} & x_{2} & \cdots & x_{r-1}\\
0 & -r+2 & x_{1} & \cdots & x_{r-2}\\
\vdots & \ddots & \ddots & \ddots & \vdots\\
0 & \cdots & 0 & -1 & x_{1}
\end{array}
\right)  \label{det-schur}%
\end{equation}

Schur polynomials naturally appear in the formulas for vertex operators for lattice
vertex algebras (cf. \cite{K}, \cite{LL} ) and they are useful in
the representation theory of $\mathcal{W}$--algebras (for one application see
\cite{A-2003}). We shall here only recall that for $\gamma\in D$, $\delta\in D
^{0} $, $\langle\gamma, \delta\rangle= -n$, $n \in{\mathbb{Z} _{> 0} }$ we
have
$$
e^{\gamma}_{i-1} e ^{\delta} = \nu S_{n-i} (\gamma(-1), \gamma(-2), \cdots) e
^{ \gamma+ \delta}\quad(\nu\in\{\pm1\} ).
$$

Let $V^{\HVir}(c_{L},0,c_{L,I},h,h_{I})$ denote the Verma module for the twisted
Hei\-sen\-berg-Virasoro Lie algebra. Let $v_{h,h_{I}}$ be its highest-weight vector.

\begin{lemma}
\label{pom-1} Assume that $v\in\mathcal{I}\subset V^{\HVir}(c_{L},0,c_{L,I},h,h_{I})$
is such that $\Phi(v)\in\mathcal{F}_{r,s}$ is a non-trivial singular vector.
Then $v$ is a singular vector in $V^{\HVir}(c_{L},0,\allowbreak c_{L,I},h,h_{I})$.
\end{lemma}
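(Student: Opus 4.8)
The plan is to exploit the injectivity of $\Phi$ on the subspace $\mathcal{I} = {\mathbb{C}}[I(-1), I(-2), \cdots] v_{h,h_I}$, which was established by construction just before Proposition \ref{free-Verma}. The essential point is that $\Phi$ is a $\HVir$--module homomorphism, so it intertwines the action of the raising operators $L(n)$ and $I(n)$ ($n \geq 1$) on the Verma module with their action on $\mathcal{F}_{r,s}$. Since $\Phi(v)$ is assumed to be a non-trivial singular vector in $\mathcal{F}_{r,s}$, we have $L(n)\Phi(v) = 0$ and $I(n)\Phi(v) = 0$ for all $n \geq 1$. My goal is to transport this annihilation property back across $\Phi$ to conclude that $v$ itself is annihilated by the same operators, i.e.\ that $v$ is singular in $V^{\HVir}(c_L, 0, c_{L,I}, h, h_I)$.

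First I would use the homomorphism property to write, for each $n \geq 1$,
$$
\Phi\bigl(L(n)v\bigr) = L(n)\Phi(v) = 0, \qquad \Phi\bigl(I(n)v\bigr) = I(n)\Phi(v) = 0.
$$
Thus $L(n)v$ and $I(n)v$ both lie in $\ker \Phi$. The next step is to observe that the raising operators $I(n)$ preserve the subspace $\mathcal{I}$: since $\mathcal{I}$ is the polynomial algebra generated by the $I(-k)$ applied to the highest weight vector, and since the $I(n)$ for $n \geq 1$ act on $\mathcal{I}$ through the Heisenberg commutation relation $[I(n), I(-k)] = n\,\delta_{n,k} C_I$ together with $I(n) v_{h,h_I} = 0$ (and recall $C_I$ acts as $c_I = 0$ here), we get that $I(n)\mathcal{I} \subseteq \mathcal{I}$. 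Because $v \in \mathcal{I}$, the vector $I(n)v$ again lies in $\mathcal{I}$. But $\Phi|_{\mathcal{I}}$ is injective and $\Phi(I(n)v) = 0$, so $I(n)v = 0$ for all $n \geq 1$.

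It remains to handle the Virasoro raising operators $L(n)$, and this is the step I expect to be the main obstacle, because $L(n)$ does \emph{not} preserve $\mathcal{I}$: applying $L(n)$ to a monomial in the $I(-k)$ produces terms involving the relation $[L(n), I(-k)] = k\, I(n-k) - \delta_{n,k}(n^2 - n) C_{L,I}$, so $L(n)v$ generally leaves the subalgebra generated by the Heisenberg modes. Hence the injectivity of $\Phi$ on $\mathcal{I}$ cannot be invoked directly for $L(n)v$. To circumvent this, I would first establish $I(n)v = 0$ for all $n \geq 1$ as above, and then argue that once $v$ is a highest weight vector for the Heisenberg subalgebra sitting inside $\mathcal{I}$, the weight of $v$ forces $L(n)v$ back into a controllable subspace. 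Concretely, $L(n)v$ has conformal weight strictly lower than that of $v$; combining $\Phi(L(n)v) = 0$ with a dimension/weight comparison of the graded pieces—using that $\Phi$ restricts to an isomorphism onto $U(\HVir).e^{r\alpha + s\beta}$ in precisely the range covered by Proposition \ref{free-Verma}, or alternatively invoking that any element killed by all $I(n)$ and lying in the appropriate weight space must itself be proportional to a Heisenberg monomial—I would deduce $L(n)v = 0$.

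Alternatively, and perhaps more cleanly, I would avoid the non-preservation issue entirely by passing to the lattice vertex algebra picture: since $\Phi(v)$ is obtained as (a multiple of) $Q^j e^{r\alpha + s\beta}$ for a suitable screening $Q$, and since by the Lemma preceding Proposition \ref{free-Verma} the operator $Q$ commutes with the entire action of $\HVir$, the singularity of $\Phi(v)$ is automatic and structural rather than something to be checked mode-by-mode. The content of the lemma is then simply that a \emph{non-zero} singular vector in the image, together with injectivity on $\mathcal{I}$, lifts to a non-zero singular vector in the Verma module; the only thing requiring care is ensuring the lift remains singular, which follows because $\Phi$ is a homomorphism and $\ker\Phi \cap \mathcal{I} = 0$. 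I expect the write-up to choose whichever of these two routes is shortest, with the genuine difficulty concentrated in controlling $L(n)v$ outside $\mathcal{I}$.
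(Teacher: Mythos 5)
Your first half (the treatment of $I(n)v$) is fine, but the step you flag as ``the main obstacle'' rests on a false claim, and that claim is exactly where your proof breaks down. You assert that $L(n)$ does \emph{not} preserve $\mathcal{I}$ because the commutator $[L(n),I(-k)]=k\,I(n-k)-\delta_{n,k}(n^{2}+n)C_{LI}$ makes $L(n)v$ ``leave the subalgebra generated by the Heisenberg modes.'' It does not: that commutator is again an element of the Heisenberg algebra plus a central term. Pushing $L(n)$ ($n\geq 1$) through a monomial $I(-k_{1})\cdots I(-k_{r})v_{h,h_{I}}$ produces only terms of the form $I(-k_{1})\cdots I(n-k_{j})\cdots I(-k_{r})v_{h,h_{I}}$ and scalar multiples coming from $C_{LI}$, together with $I(-k_{1})\cdots I(-k_{r})L(n)v_{h,h_{I}}=0$. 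Since $C_{I}$ acts by zero, the modes $I(m)$ with $m>0$ commute with every $I(-j)$ and annihilate $v_{h,h_{I}}$, while $I(0)$ acts by the scalar $h_{I}$; hence every such term lands back in $\mathcal{I}$. So $L(n)\mathcal{I}\subseteq\mathcal{I}$ for all $n\geq 1$, and the paper's proof is precisely the one-line argument you were looking for: if $L(n_{0})v\neq 0$ for some $n_{0}>0$, then $L(n_{0})v\in\mathcal{I}$ and the injectivity of $\Phi|\mathcal{I}$ gives $L(n_{0})\Phi(v)=\Phi(L(n_{0})v)\neq 0$, contradicting the singularity of $\Phi(v)$.

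Because you ruled out this direct route, your two proposed workarounds are left vague and neither is a proof as written. The ``dimension/weight comparison'' is not carried out, and the appeal to Proposition \ref{free-Verma} would only cover the range $h_{I}/c_{L,I}-1\notin-\mathbb{Z}_{>0}$ (where the lemma becomes trivial since $\Phi$ is injective everywhere), not the lemma as stated. The second, ``structural'' route is circular at the key point: you say the lift remains singular ``because $\ker\Phi\cap\mathcal{I}=0$,'' but to invoke that you need $L(n)v\in\mathcal{I}$ --- the very fact you denied. Once you correct the claim about $L(n)$ preserving $\mathcal{I}$, your argument collapses to the paper's.
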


\begin{proof}
Assume that $v$ is not a singular vector. Then there is $n_{0} > 0$ such that
$L(n_{0}) v \ne0$. But $L(n_{0}) v \in \mathcal{I} $ and therefore $L(n_{0}) \Phi(v) =
\Phi( L(n_{0}) v) \ne0$. This is a contradiction. Proof follows.
\end{proof}

Our main result is:

\begin{theorem}
Assume that $c_{L,I}\neq0$ and $p=\frac{h_{I}}{c_{L,I}}-1\in{\mathbb{Z}}%
_{>0}.$ Then
$$
S_{p}\left(-\frac{I(-1)}{c_{L,I}},-\frac{I(-2)}{c_{L,I}},\dots,-\frac{I(-p)}{c_{L,I}}\right)v_{h,h_{I}}%
$$
is a singular vector in the Verma module $V^{\HVir}(c_{L},0,c_{L,I},h,h_{I})$ of
conformal weight $p$.
\end{theorem}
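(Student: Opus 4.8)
The plan is to realize the candidate vector inside the Fock module $\mathcal{F}_{r,s}$, to identify it up to sign with $Q e^{\delta}$ for a suitably chosen highest weight vector $e^{\delta}$, and then to transport the singularity back to the Verma module by Lemma~\ref{pom-1}. First I would fix $(r,s)$ (Proposition~\ref{self-dual}, which applies since $h_I=(p+1)c_{L,I}\neq c_{L,I}$) so that, by (\ref{par-2}), $e^{r\alpha+s\beta}$ is a highest weight vector of weight $(h,h_I)$. Because $p=\frac{h_I}{c_{L,I}}-1\in{\mathbb{Z}}_{>0}$ we have $\frac{h_I}{c_{L,I}}-1\notin-{\mathbb{Z}}_{>0}$, so Proposition~\ref{free-Verma} gives $\mathcal{F}_{r,s}\cong V^{\HVir}(c_L,0,c_{L,I},h,h_I)$ together with the homomorphism $\Phi$, whose restriction $\Phi|\mathcal{I}$ is injective. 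Set
\[ w=S_p\!\left(-\tfrac{I(-1)}{c_{L,I}},\dots,-\tfrac{I(-p)}{c_{L,I}}\right)v_{h,h_I}. \]
Then $w\in\mathcal{I}$, and $w\neq0$ since $S_p$ is a nonzero polynomial and the pure $I(-k)$-monomials are linearly independent in the Verma module. As $I(-k)=\alpha(-k)+\beta(-k)$ while $\phi(-k)=-\frac{1}{c_{L,I}}(\alpha(-k)+\beta(-k))$ for $\phi=-\frac{\alpha+\beta}{c_{L,I}}$, the map $\Phi$ sends $w$ to $\Phi(w)=S_p(\phi(-1),\dots,\phi(-p))\,e^{r\alpha+s\beta}$.

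The heart of the argument will be to exhibit $\Phi(w)$ as a value of the screening operator. I would put $\delta=(r\alpha+s\beta)-\phi=(r+\tfrac{1}{c_{L,I}})\alpha+(s+\tfrac{1}{c_{L,I}})\beta$. Using the hyperbolic form one finds $\langle\phi,\phi\rangle=0$ (isotropy of $\alpha+\beta$) and $\langle\phi,\delta\rangle=\langle\phi,r\alpha+s\beta\rangle=-\frac{r-s}{c_{L,I}}=-\frac{h_I}{c_{L,I}}=-(p+1)$, and one checks $\delta\in D^0$ since $\langle\delta,\alpha+\beta\rangle=r-s=h_I\in c_{L,I}\mathbb{Z}$, so that $e^{\delta}$ and $e^{r\alpha+s\beta}$ lie in one $\overline{V}_D$-module and $Q=e^{\phi}_0$ acts between them. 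Applying the lattice vertex-operator formula $e^{\gamma}_{i-1}e^{\delta}=\nu\,S_{n-i}(\gamma(-1),\gamma(-2),\dots)e^{\gamma+\delta}$ with $\gamma=\phi$, $n=-\langle\phi,\delta\rangle=p+1$ and $i=1$ then yields $Q e^{\delta}=e^{\phi}_0 e^{\delta}=\nu\,S_p(\phi(-1),\dots)e^{r\alpha+s\beta}=\nu\,\Phi(w)$ for some $\nu\in\{\pm1\}$; note that $S_p$ involves only the first $p$ variables, matching the truncation appearing in $w$.

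It then remains to assemble the pieces. Since $e^{\delta}$ is a highest weight vector and $Q$ commutes with $L(n)$ and $I(n)$ for all $n$, the vector $Qe^{\delta}$ is annihilated by every $L(n),I(n)$ with $n>0$, hence is a singular vector of $\mathcal{F}_{r,s}$ whenever it is nonzero. As $\Phi(w)=\nu\,Qe^{\delta}$ and $\Phi(w)\neq0$ (because $w\neq0$ and $\Phi|\mathcal{I}$ is injective), $\Phi(w)$ is a nontrivial singular vector. Finally, Lemma~\ref{pom-1} applied to $v=w\in\mathcal{I}$ shows that $w$ itself is a singular vector of $V^{\HVir}(c_L,0,c_{L,I},h,h_I)$; its conformal weight is $p$ because $S_p$ is homogeneous of weight $p$ when $I(-k)$ is assigned weight $k$.

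The only genuinely delicate step is the middle one: correctly matching $Q$ to the lattice formula. In particular, the isotropy $\langle\phi,\phi\rangle=0$ together with the index bookkeeping $n-i=p$ are what simultaneously force the conformal weights on the two sides to agree and make the Schur polynomial appear with exactly the variables $-I(-1)/c_{L,I},\dots,-I(-p)/c_{L,I}$ stated in the theorem. Everything else — membership $w\in\mathcal{I}$, injectivity of $\Phi|\mathcal{I}$, and the screening property of $Q$ — is already available from the preceding results.
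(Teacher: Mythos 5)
Your argument is correct and follows essentially the same route as the paper: identify $(r,s)$ via (\ref{par-2}), compute $\langle\phi,\delta\rangle=-(p+1)$ for $\delta=r\alpha+s\beta+\frac{\alpha+\beta}{c_{L,I}}$, apply the lattice vertex-operator/Schur formula to get $Qe^{\delta}=\nu S_p(-I(-1)/c_{L,I},\dots)e^{r\alpha+s\beta}$, and transport the singular vector back via Lemma \ref{pom-1}. Your added checks (nonvanishing via injectivity of $\Phi|\mathcal{I}$, membership of $\delta$ in the correct $\overline{V}_D$-module coset) are details the paper leaves implicit but are entirely consistent with its proof.
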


\begin{proof}
Take now (unique) $\lambda,\mu,r,s\in{\mathbb{C}}$ such that relations
(\ref{par-1}) and (\ref{par-2}) hold. Then $M:=U(\HVir).e^{r\alpha
+s\beta}$ is a highest weight module with the highest weight $(h,h_{I})$.

Then
$$
(-\frac{\alpha+\beta}{c_{L,I}},r\alpha+s\beta+\frac{\alpha+\beta}{c_{L,I}%
})=-p-1.
$$
Therefore,
$$
Qe^{r\alpha+s\beta+\frac{\alpha+\beta}{c_{L,I}}}=S_{p}\left(-\frac{I(-1)}{c_{L,I}%
},\dots,-\frac{I(-p)}{c_{L,I}}\right)e^{r\alpha+s\beta}%
$$
is a singular vector in $M$. Lemma \ref{pom-1} gives that $S_{p}%
(-\frac{I(-1)}{c_{L,I}},\dots,-\frac{I(-p)}{c_{L,I}})v_{h,h_{I}}$
is a singular vector in $V^{\HVir}(c_{L},0,c_{L,I},h,h_{I})$. Proof follows.
\end{proof}

Now  we shall present a formula for singular vectors in the case $\frac{h_{I}}{c_{L,I}}-1\in-{\N}.$

\begin{theorem}
Let $\frac{h_{I}}{c_{L,I}}-1=-p\in{\N}$ and let $\Lambda\in
V^{\HVir}(c_{L},c_{L,I},h,h_{I})_{p}$ be a singular vector. Then
\[
\Lambda=\sum_{i=0}^{p-1}S_{i}\left(  \frac{I(-1)}{c_{L,I}},\ldots,\frac{I(-i)%
}{c_{L,I}}\right)  L(i-p)v+x_{p}v,
\]
for some $x_{p}v\in\mathcal{I}_{p}$.
\end{theorem}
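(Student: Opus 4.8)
The plan is to solve the highest--weight equations $L(n)\Lambda=0$ and $I(n)\Lambda=0$ ($n\ge1$) directly, organizing $V^{\HVir}(c_{L},0,c_{L,I},h,h_{I})_{p}$ by the number of $L$--modes. By \cite{Billig} the space of singular vectors in degree $p$ is one--dimensional, so it suffices to pin down the shape of an arbitrary singular vector $\Lambda$. Let $F_{k}$ be the span of PBW monomials containing at most $k$ factors $L(-m)$, and write $\Lambda=\sum_{k=0}^{K}\Lambda^{(k)}$ with $\Lambda^{(K)}\ne0$. Two facts, both read off from the commutation relations, drive everything. First, since $c_{I}=0$ the modes $I(n)$ commute with every $I(-m)$ and annihilate $v$; hence $I(n)$ kills $\mathcal I=F_{0}$, and because $[I(n),L(-m)]=nI(n-m)+\delta_{n,m}(m^{2}-m)c_{L,I}$ trades one $L$ for an $I$, the operator $I(n)$ lowers the $L$--number, inducing $\overline{I(n)}\colon F_{k}/F_{k-1}\to F_{k-1}/F_{k-2}$. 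Second, $L(n)$ preserves $F_{k}$, and on $F_{k}/F_{k-1}$ its leading part $\overline{L(n)}$ splits as the centreless Witt action on the $L$--variables plus the cross action coming from $[L(n),I(-m)]=mI(n-m)-\delta_{n,m}(n^{2}+n)c_{L,I}$ on the $I$--variables.

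Projecting the equations to the top filtration level gives $\overline{I(n)}\Lambda^{(K)}=0$ and $\overline{L(n)}\Lambda^{(K)}=0$ for all $n\ge1$; that is, $\Lambda^{(K)}$ lies in the joint kernel of the associated--graded operators at conformal weight $p$. The first --- and I expect the main --- obstacle is to show that for $K\ge2$ this joint kernel is zero, forcing $K\le1$. Here one exploits $h_{I}=(1-p)c_{L,I}$, which makes the diagonal coefficient $nh_{I}+(n^{2}-n)c_{L,I}=c_{L,I}\,n(n-p)$ of $\overline{I(n)}$ nonzero for $1\le n\le p-1$; working in the polynomial model $\mathbb C[L(-m)]\otimes\mathbb C[I(-m)]$ one shows by induction on $K$ that no nonzero element of $L$--degree $\ge2$ and weight $p$ is simultaneously annihilated. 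A single operator such as $\overline{I(1)}$ already settles the lowest cases, but in general the whole family $\{\overline{I(n)}\}$ --- and, if needed, the $\overline{L(n)}$ --- must be combined, which is the delicate point. That $K\ge1$ (hence $K=1$) is immediate from the free--field realization: by Proposition \ref{struktura} the map $\Phi$ surjects onto the irreducible quotient with $\Phi|_{\mathcal I}$ injective, while any singular vector lies in $\ker\Phi$, so $\Lambda\notin\mathcal I$.

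With $K=1$, write $\Lambda^{(1)}=\sum_{m=1}^{p}P_{p-m}(I)\,L(-m)v$ with each $P_{p-m}$ a polynomial in the $I(-j)$. Applying $I(n)$ and projecting to $\mathcal I=F_{0}$ (which for $K=1$ is the whole of $I(n)\Lambda^{(1)}$), the commutator computation above together with $I(0)v=h_{I}v$ collapses $I(n)\Lambda=0$ to
\[
N\,P_{N}=\sum_{j=1}^{N}\frac{I(-j)}{c_{L,I}}\,P_{N-j},\qquad N=p-n .
\]
Setting $x_{j}=I(-j)/c_{L,I}$, this is precisely the relation obtained by applying $y\,\tfrac{d}{dy}$ to the defining equation \eqref{eschurd}, whose unique solution with prescribed $P_{0}$ is $P_{N}=P_{0}\,S_{N}(x_{1},x_{2},\dots)$. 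Normalizing the coefficient $P_{0}$ of $L(-p)v$ to $1$ gives $\Lambda^{(1)}=\sum_{i=0}^{p-1}S_{i}\big(\tfrac{I(-1)}{c_{L,I}},\dots,\tfrac{I(-i)}{c_{L,I}}\big)L(i-p)v$, and the remaining component $\Lambda^{(0)}\in\mathcal I_{p}$ is the asserted $x_{p}v$.

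Finally I would note that the existence of such $x_{p}$ needs no separate argument: the singular vector is guaranteed by \cite{Billig}, so once its $L$--linear part is forced to be the Schur expression above, its pure--$I$ part simply is $x_{p}$. Alternatively one can exhibit $x_{p}$ constructively inside the Fock space, by verifying that $\sum_{i}S_{i}(\dots)L(i-p)e^{r\alpha+s\beta}$ lies in the $I$--subspace $\mathbb C[I(-1),I(-2),\dots]e^{r\alpha+s\beta}$ of $\mathcal F_{r,s}$ --- the transverse hyperbolic--conjugate modes cancelling by the same Schur generating--function identity --- which gives $\Phi(\Lambda)=0$ and, via the one--dimensionality of the singular space, simultaneously re--proves $K=1$.
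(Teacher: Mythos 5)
Your treatment of the second half of the argument is correct and coincides with the paper's: granting that $\Lambda$ is linear in the $L$--modes, applying $I(n)$ for $1\le n\le p-1$, using $[I(n),L(-m)]=nI(n-m)+\delta_{n,m}(m^{2}-m)c_{L,I}$ and $h_{I}=(1-p)c_{L,I}$, gives
\[
N\,P_{N}=\sum_{j=1}^{N}\frac{I(-j)}{c_{L,I}}\,P_{N-j},\qquad N=p-n,
\]
and $P_{0}\neq 0$ (otherwise all $P_{N}$ vanish and $\Lambda\in\mathcal I$, contradicting the injectivity of $\Phi|_{\mathcal I}$ since every degree-$p$ singular vector lies in $\ker\Phi$), so after normalization $P_{N}=S_{N}$. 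The paper runs the identical computation as an induction on $k$ via $I(p-k)\Lambda=0$ and identifies the recursion with the Laplace expansion of (\ref{det-schur}); your generating-function reading of (\ref{eschurd}) is equivalent.

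The genuine gap is the step you yourself flag as ``the delicate point'': proving that the $L$--degree of $\Lambda$ is at most one. Your filtration setup and the observation that the diagonal coefficient $nh_{I}+(n^{2}-n)c_{L,I}=c_{L,I}\,n(n-p)$ is nonzero for $1\le n\le p-1$ are sensible ingredients, but you do not carry out the induction showing that for $K\ge 2$ the joint kernel of the operators $\overline{I(n)}$ (and $\overline{L(n)}$) on the associated graded is trivial; you explicitly leave the combination of the whole family unresolved, and for $K\ge2$ no single $\overline{I(n)}$ suffices. The paper does not prove this statement either: it quotes the linearity of $\Lambda$ in the $L$'s from \cite{R1}, and the entire proof rests on that citation. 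So your argument is incomplete at exactly the point where the paper relies on prior work; you should either complete that induction or invoke \cite{R1}. Your closing alternative --- verifying inside $\mathcal F_{r,s}$ that $\sum_{i}S_{i}(\cdots)L(i-p)e^{r\alpha+s\beta}$ lies in ${\Bbb C}[I(-1),I(-2),\dots]e^{r\alpha+s\beta}$, so that a vector of the stated shape lies in the one-dimensional degree-$p$ piece of $\ker\Phi$ --- would indeed close the gap, since the character count shows that piece is spanned by the singular vector; but the asserted cancellation of the transverse modes is exactly the computation you have not done, so as written it does not substitute for the missing step.
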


\begin{proof}
It was proven in (\cite{R1}) that $\Lambda$ is linear in $L$'s, i.e.\
\[
\Lambda=L(-p)+\sum_{i=1}^{p-1}x_{i}L(i-p)v+x_{p},\text{ where }x_{i}%
\in\mathcal{I}_{-i}.
\]

We shall prove our assertion by induction.
Suppose that $$x_{i}=S_{i}\left(  \frac{I(-1)}{c_{L,I}},\ldots,\frac{I(-i)%
}{c_{L,I}}\right)  \quad  \mbox{for} \ \  i=1,\ldots,k-1.$$ Using the fact that $I(i)$'s
commute and that $I(i)\Lambda=0$ for $i>0$ we have
\begin{multline*}
0=I(p-k)\Lambda=\\
(p-k)\left(  \sum_{i=0}^{k-1}S_{i}\left(  \frac{I(-1)}{c_{L,I}}%
,\ldots,\frac{I(-i)}{c_{L,I}}\right)  I(i-k)v+(h_{I}+(p-k-1)c_{L,I}%
x_{k}v\right)  .
\end{multline*}
Since $h_{I}=-(p-1)c_{L,I}$ we get $x_{k}=\frac{1}{kc_{L,I}}\sum_{i=0}%
^{k-1}S_{i}\left(  \frac{I(-1)}{c_{L,I}},\ldots,\frac{I(-i)}{c_{L,I}}\right)
I(i-k)$ and the right side is exactly the recursive relation one gets from the
Laplace expansion in (\ref{det-schur}). Therefore, $x_{k}=S_{k}\left(
\frac{I(-1)}{c_{L,I}},\ldots,\frac{I(-k)}{c_{L,I}}\right)  $.
\end{proof}

\section{An application of singular vectors to the irreducibility of tensor products}

We shall now apply our formula for singular vectors which uses Schur polynomials to the problem of irreducibility of the tensor products
$V_{\alpha,\beta,F}^{\prime} \otimes L^{\HVir}(c_{L},0,c_{L,I},h,h_{I}) $. Similar irreducibility problems for the Virasoro algebra and the twisted Heisenberg-Virasoro algebra were studied in the papers \cite{CGZ}, \cite{LZ} \cite{R-ART}, \cite{R1}, \cite{Zh}. Our irreducibility result is a certain refinement of these results.

Let us discuss the irreducibility of $V_{\alpha,\beta,F}^{\prime}\otimes
L^{\HVir}(c_{L},0,c_{L,I},h,h_{I})$ in general. It was shown in \cite{R1} (see also \cite{LZ} ) that this
module is irreducible if and only if \bea \label{cond-irr} U(\HVir)(v_{n}\otimes v)\subseteq
U(\HVir)(v_{n+1}\otimes v) \quad \forall n\in\mathbb{Z}. \eea One can obtain
this inclusion using explicit formulas for  singular vectors in $V^{\HVir}(c_{L},0,c_{L,I}%
,h,h_{I}).$  We shall first consider the case $h_I /c_{L,I} - 1 = p \in {\N}$.  Then the maximal submodule in $V^{\HVir}(c_{L},0,c_{L,I},h,h_{I})$ is generated by the singular vector
$$ \Omega = S_{p}(-\frac{I(-1)}{c_{L,I}},-\frac{I(-2)}{c_{L,I}},\cdots)v_{h,h_{I}}. $$

In \cite{LZ}, the authors introduce  a very useful criterion for studying the irreducibility of tensor products (this approach   is equivalent to  the criterion (\ref{cond-irr}) ). They introduce the linear map $\phi_n : U(\HVir^{-} ) \rightarrow {\Bbb C}$ inductively as follows:
\bea
&& \phi_n (1) = 1 \nonumber  \\
&& \phi_n( I ({-i}) u ) = - F  \phi_n (u) \nonumber \\
&& \phi_n (L( -i) u ) =   (\alpha + \beta + k + i + n - i \beta ) \phi_n (u) \label{rel-lu} \eea
for every $u \in   U(\HVir^{-} ) $ of weight $-k$.

Then by  Lemma 25 of  \cite{LZ} (see also Theorem 1 of \cite{CGZ} for a similar statement in the Virasoro case) we have that $V_{\alpha,\beta,F}^{\prime} \otimes L^{\HVir}(c_{L},0,c_{L,I},h,h_{I}) $ is irreducible if and only if $\phi_n( \Omega) \ne 0$ for every $n \in {\Bbb Z}$.

In our case we have:

\begin{lemma} Let $p=\frac{h_{I}}{c_{L,I}}-1\in{\N}$. For every $n \in {\Z}$ we have
$$ \phi_n (\Omega) = (-1) ^p  {- \frac{F}{ c_{L,I} } \choose p }. $$
\end{lemma}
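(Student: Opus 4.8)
The plan is to exploit the fact that the singular vector $\Omega$ is a polynomial purely in the modes $I(-1),I(-2),\dots$ applied to the highest weight vector, with no occurrence of any $L(-i)$. Consequently, in evaluating $\phi_n(\Omega)$ the defining rule for $\phi_n$ on $L$-modes --- the only rule carrying the dependence on $\alpha,\beta$ and $n$ --- is never invoked. First I would record that, by the rule $\phi_n(I(-i)u)=-F\,\phi_n(u)$ together with $\phi_n(1)=1$ and the commutativity of the $I(-i)$, the map $\phi_n$ sends any monomial $I(-i_1)\cdots I(-i_m)v_{h,h_I}$ to $(-F)^m$, independently of the indices $i_1,\dots,i_m$ and independently of $n$. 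In particular $\phi_n(\Omega)$ does not depend on $n$, exactly as the statement anticipates.

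Next I would translate this observation into a substitution inside the Schur polynomial. Writing $\Omega=S_p(-I(-1)/c_{L,I},-I(-2)/c_{L,I},\dots)v_{h,h_I}$ and expanding $S_p$ as a sum of monomials $\prod_i x_i^{a_i}$ with $\sum_i i\,a_i=p$, each such monomial becomes $\prod_i(-I(-i)/c_{L,I})^{a_i}$, a product of $\sum_i a_i$ copies of the $I$-modes. Applying $\phi_n$ replaces each $I(-i)$ by $-F$, so the monomial evaluates to $\prod_i(F/c_{L,I})^{a_i}=(F/c_{L,I})^{\sum_i a_i}$. Hence $\phi_n(\Omega)=S_p(t,t,t,\dots)$, the Schur polynomial with every variable specialized to $t:=F/c_{L,I}$.

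Finally I would evaluate $S_p(t,t,\dots)$ directly from the defining generating function (\ref{eschurd}). Setting all $x_n=t$ gives
$$
\sum_{r\ge 0}S_r(t,t,\dots)\,y^r=\exp\Big(t\sum_{n\ge1}\tfrac{y^n}{n}\Big)=\exp\big(-t\log(1-y)\big)=(1-y)^{-t},
$$
and the binomial series $(1-y)^{-t}=\sum_{r\ge0}\binom{-t}{r}(-1)^r y^r$ identifies the coefficient of $y^p$ as $S_p(t,t,\dots)=(-1)^p\binom{-t}{p}$. Substituting $t=F/c_{L,I}$ then yields $\phi_n(\Omega)=(-1)^p\binom{-F/c_{L,I}}{p}$, as claimed.

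I do not expect a genuine obstacle here: the whole argument rests on the single structural feature that $\Omega$ contains no $L$-modes, so that $\phi_n$ reduces to counting $I$-factors. The only point requiring care is the bookkeeping in the substitution step, namely that the two sign conventions --- the factor $-1/c_{L,I}$ inside the Schur variables and the factor $-F$ produced by $\phi_n$ --- combine to give the \emph{positive} quantity $F/c_{L,I}$ in every slot; the remainder is just the standard closed form $(1-y)^{-t}$ for the Schur generating function with equal arguments.
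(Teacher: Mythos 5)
Your proof is correct: since $\Omega$ involves only $I$-modes, $\phi_n$ sends each monomial to $(-F)^{\#\text{factors}}$, turning $\phi_n(\Omega)$ into $S_p(t,t,\dots)$ with $t=F/c_{L,I}$, and the generating-function evaluation $(1-y)^{-t}=\sum_r(-1)^r\binom{-t}{r}y^r$ gives exactly the claimed formula. The paper states this lemma without proof, and your computation is precisely the routine verification it implicitly relies on.
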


So we obtain:

\begin{theorem}
\label{-+}
Let $p=\frac{h_{I}}{c_{L,I}}-1\in{\N}.$ The module $V_{\alpha
,\beta,F}^{\prime}\otimes L^{\HVir}(c_{L},0,c_{L,I},h,h_{I})$ is irreducible if and only if   $F\neq(i-p)c_{L,I},$ for $i=1,\ldots,p.$
\end{theorem}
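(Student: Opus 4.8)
The plan is to combine the explicit Schur-polynomial formula for the singular vector $\Omega$ with the criterion from \cite{LZ}: the tensor product $V_{\alpha,\beta,F}^{\prime}\otimes L^{\HVir}(c_{L},0,c_{L,I},h,h_{I})$ is irreducible if and only if $\phi_n(\Omega)\neq 0$ for every $n\in{\Z}$. Thus the entire theorem reduces to the preceding Lemma, which computes
$$
\phi_n(\Omega)=(-1)^{p}\binom{-\frac{F}{c_{L,I}}}{p}.
$$
Granting the Lemma, the proof of the theorem is then a purely arithmetic analysis of when this binomial coefficient vanishes.

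First I would expand the generalized binomial coefficient as
$$
\binom{-\frac{F}{c_{L,I}}}{p}=\frac{1}{p!}\prod_{i=0}^{p-1}\left(-\frac{F}{c_{L,I}}-i\right)
=\frac{(-1)^{p}}{p!\,c_{L,I}^{\,p}}\prod_{i=0}^{p-1}\left(F+i\,c_{L,I}\right).
$$
Since $c_{L,I}\neq 0$, the prefactor is nonzero, so $\phi_n(\Omega)=0$ if and only if one of the factors $F+i\,c_{L,I}$ vanishes for some $i\in\{0,1,\dots,p-1\}$, i.e. $F=-i\,c_{L,I}$ for such $i$. Reindexing by $j=p-i$ (so that $i=0,\dots,p-1$ corresponds to $j=1,\dots,p$) rewrites this vanishing locus as $F=(j-p)c_{L,I}$ for $j=1,\dots,p$, which is exactly the excluded set in the statement. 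Hence $\phi_n(\Omega)\neq 0$ for all $n$ precisely when $F\neq(i-p)c_{L,I}$ for every $i=1,\dots,p$, giving the desired equivalence.

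The one genuine subtlety worth flagging is that the criterion demands $\phi_n(\Omega)\neq 0$ for \emph{every} $n\in{\Z}$; this is automatic here only because the Lemma asserts that $\phi_n(\Omega)$ is in fact independent of $n$. I would therefore make explicit that the $n$-dependence drops out, so a single computation settles all $n$ simultaneously. The real content of the argument lives in the Lemma, not in the theorem: the main obstacle is establishing that $\phi_n(\Omega)=(-1)^{p}\binom{-F/c_{L,I}}{p}$. Since $\Omega=S_{p}(-I(-1)/c_{L,I},\dots)v_{h,h_{I}}$ is linear in the $I(-i)$ generators (no $L$'s appear), evaluating $\phi_n$ on it amounts to substituting $\phi_n(I(-i)u)=-F\phi_n(u)$ repeatedly; the cleanest route is to feed the substitution $x_i\mapsto -F/c_{L,I}$ (the same value for every $i$) into the generating function \eqref{eschurd}, whereupon $S_p$ evaluates to the coefficient of $y^p$ in $\exp\bigl((-F/c_{L,I})\sum_{n\geq 1}y^n/n\bigr)=(1-y)^{F/c_{L,I}}$, which is exactly $(-1)^{p}\binom{-F/c_{L,I}}{p}$. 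Assuming the Lemma as stated, the theorem then follows immediately from the arithmetic factorization above.
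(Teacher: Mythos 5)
Your proposal is correct and follows essentially the same route as the paper: the paper likewise obtains the theorem immediately from the criterion of \cite{LZ} combined with the preceding lemma $\phi_n(\Omega)=(-1)^p\binom{-F/c_{L,I}}{p}$, with the independence of $n$ and the factorization $\binom{-F/c_{L,I}}{p}=\frac{(-1)^p}{p!\,c_{L,I}^p}\prod_{i=0}^{p-1}(F+ic_{L,I})$ yielding precisely the excluded set $F=(i-p)c_{L,I}$, $i=1,\dots,p$. One sign slip in your sketch of the lemma is worth fixing: since $\phi_n$ replaces each $I(-i)$ by $-F$ and the arguments of $S_p$ are $x_i=-I(-i)/c_{L,I}$, the correct substitution is $x_i\mapsto +F/c_{L,I}$, giving $(1-y)^{-F/c_{L,I}}$ and hence the coefficient $(-1)^p\binom{-F/c_{L,I}}{p}$; with $x_i\mapsto -F/c_{L,I}$ as you wrote, one would instead get $(-1)^p\binom{F/c_{L,I}}{p}$, whose vanishing locus is $F=ic_{L,I}$, $i=0,\dots,p-1$, and that would contradict the stated theorem.
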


\begin{remark}
The reducibility of certain tensor product modules  $V_{\alpha
,\beta,F}^{\prime}\otimes L^{\HVir}(c_{L},0,c_{L,I},h,h_{I})$ (not all!) can be directly proved by using the intertwining operators constructed in Section \ref{intertwining}.
\end{remark}

We shall now extend the   irreducibility result obtained in Theorem \ref{-+} to the case
$h_I / c_{L,I} -1  \in - {\N}$.

\begin{lemma} \label{pomoc-red}
Let $ \frac{h_I}{c_{L,I}} - 1 = - p \in -{\N}$. Then for every $n \in {\Z}$ we have
\begin{multline*}
\phi_{n}(\Lambda) = \sum_{i=0}^{p-1}\left(  \left(  n+p-i+\alpha
+(1+i-p)\beta\right) (-1)^i  { \frac{F}{c_{L,I}} \choose i}\right)  + \phi_{n}(x_{p})= \\
=(-1) ^{p-1}  \left( { F/ c_{L,I} -1 \choose p- 1}   (\alpha + n + \beta) + (1-\beta) { F / c_{L,I}-2 \choose p-1} \right) +  g_p ( F)
\end{multline*}
for a certain polynomial $g_p \in {\Bbb C}[x]$.

In particular, if $ F / c_{L,I}  \notin \{ 1, \dots, p-1\}$, then for every $n \in {\Z}$, there is a unique $\alpha:=\alpha_n \in {\Bbb C}$  such that $\phi_n (\Lambda) = 0$.
\end{lemma}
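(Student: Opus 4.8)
The plan is to compute $\phi_n(\Lambda)$ directly from the explicit form of the singular vector furnished by the preceding theorem, namely $\Lambda=\sum_{i=0}^{p-1}S_i\!\left(\frac{I(-1)}{c_{L,I}},\dots\right)L(i-p)v+x_pv$, and then to massage the resulting finite sum into closed form. The first observation is that $\phi_n$ acts on each summand multiplicatively. Since $c_I=0$, the negative modes $I(-j)$ commute, so $S_i\!\left(\frac{I(-1)}{c_{L,I}},\dots\right)$ is an unambiguous commutative polynomial whose monomials, placed to the left of $L(i-p)$, can be peeled off one generator at a time. Because the rule $\phi_n(I(-j)u)=-F\phi_n(u)$ is insensitive to the weight of $u$, peeling all Heisenberg modes of a monomial before reaching $L(i-p)$ gives $\phi_n\!\left(S_i(\tfrac{I(-1)}{c_{L,I}},\dots)L(i-p)\right)=S_i\!\left(-\tfrac{F}{c_{L,I}},\dots,-\tfrac{F}{c_{L,I}}\right)\phi_n(L(i-p))$; that is, $\phi_n$ simply substitutes each Schur variable by $-F/c_{L,I}$. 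Specializing the generating function (\ref{eschurd}) at the constant sequence $x_j=a$ yields $\sum_r S_r(a,\dots)y^r=(1-y)^{-a}$, whence $S_i\!\left(-\tfrac{F}{c_{L,I}},\dots\right)=(-1)^i\binom{F/c_{L,I}}{i}$. Applying the $L$-rule to $L(i-p)=L(-(p-i))$ with $u=1$ of weight $0$ produces the factor $n+p-i+\alpha+(1+i-p)\beta$, and summing over $i$ together with $\phi_n(x_p)$ gives the first displayed equality.

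For the second equality I would set $t=F/c_{L,I}$ and split the linear factor as $n+p-i+\alpha+(1+i-p)\beta=A+Bi$ with $A=\alpha+n+p+(1-p)\beta$ and $B=\beta-1$, so that $\phi_n(\Lambda)-\phi_n(x_p)=A\sum_{i=0}^{p-1}(-1)^i\binom{t}{i}+B\sum_{i=0}^{p-1}i(-1)^i\binom{t}{i}$. The two sums are evaluated by the standard identities $\sum_{i=0}^{m}(-1)^i\binom{t}{i}=(-1)^m\binom{t-1}{m}$ and, using $i\binom{t}{i}=t\binom{t-1}{i-1}$, $\sum_{i=0}^{p-1}i(-1)^i\binom{t}{i}=(-1)^{p-1}t\binom{t-2}{p-2}$. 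Collecting separately the coefficient of $\alpha+n$, the coefficient of $\beta$, and the part independent of $\alpha,n,\beta$, the $\beta$-coefficient collapses via Pascal's rule together with the identity $(1-p)\binom{t-1}{p-1}+t\binom{t-2}{p-2}=\binom{t-2}{p-2}=\binom{t-1}{p-1}-\binom{t-2}{p-1}$, reproducing exactly the pattern $\binom{t-1}{p-1}(\alpha+n+\beta)+(1-\beta)\binom{t-2}{p-1}$. Everything left over is a polynomial in $t$ alone; and since $x_p$ involves only the $I(-j)$'s, $\phi_n(x_p)$ is likewise a polynomial in $F$ independent of $\alpha,n,\beta$, so these two contributions combine into the single polynomial $g_p(F)$.

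The final assertion is then read off from the closed form: for fixed $n,\beta,F$ the quantity $\phi_n(\Lambda)$ is affine-linear in $\alpha$ with leading coefficient $(-1)^{p-1}\binom{F/c_{L,I}-1}{p-1}$. Because $\binom{t-1}{p-1}=\frac{(t-1)(t-2)\cdots(t-p+1)}{(p-1)!}$ vanishes precisely when $t\in\{1,\dots,p-1\}$, the hypothesis $F/c_{L,I}\notin\{1,\dots,p-1\}$ guarantees a nonzero slope, hence a unique $\alpha=\alpha_n$ solving $\phi_n(\Lambda)=0$. I expect the two delicate points to be the substitution principle for $\phi_n$ on the Schur polynomials, which rests entirely on the weight-independence of the $I$-rule, and the bookkeeping that isolates the $\alpha,n,\beta$-dependent part from the $F$-only remainder. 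The binomial identity matching the $\beta$-coefficients is the step most prone to sign and index errors, and I would verify it via the Pascal relation exactly as indicated above.
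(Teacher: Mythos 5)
Your proof is correct, and since the paper states Lemma \ref{pomoc-red} without proof, your computation supplies exactly the verification the authors leave implicit: the recursion defining $\phi_n$ does reduce each term to $S_i\bigl(-\tfrac{F}{c_{L,I}},\dots\bigr)\,\phi_n(L(i-p))=(-1)^i\binom{F/c_{L,I}}{i}\bigl(n+p-i+\alpha+(1+i-p)\beta\bigr)$, and the two alternating binomial sums together with the identity $(1-p)\binom{t-1}{p-1}+t\binom{t-2}{p-2}=\binom{t-2}{p-2}=\binom{t-1}{p-1}-\binom{t-2}{p-1}$ give the stated closed form, with $\phi_n(x_p)$ correctly absorbed into $g_p(F)$ because $x_p$ involves only Heisenberg modes. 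The final uniqueness claim then follows as you say from the nonvanishing of the slope $(-1)^{p-1}\binom{F/c_{L,I}-1}{p-1}$ precisely when $F/c_{L,I}\notin\{1,\dots,p-1\}$.
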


\begin{theorem} \label{red-case2} Let $ \frac{h_I}{c_{L,I}} - 1 = - p \in -{\N}$.
\item[(1)]Let $ F / c_{L,I}  \notin \{ 1, \dots, p-1\}$ and let $\alpha_{0} \in {\Bbb C}$ be such that
$$ \phi_{0}(  \Lambda)=  0.$$
Then $V'_{\alpha, \beta, F} \otimes L^{\HVir}(c_{L},0,c_{L,I},h,h_{I}) $ is reducible if and only if $\alpha \equiv  \alpha_{0} \mod \Z$. In this case
 $W^0 = U(\HVir). (v_{0}  \otimes v)$ is the simple submodule  of  $V'_{\alpha, \beta, F} \otimes L^{\HVir}(c_{L},0,c_{L,I},h,h_{I}) $  and
 $V'_{\alpha, \beta, F} \otimes L^{\HVir}(c_{L},0,c_{L,I},h,h_{I})  / W^0 $
 is a highest weight $\HVir$--module $ \widetilde{L}^{\HVir}(c_{L},0,c_{L,I},h'',h''_{I}) $(not necessarily irreducible)
 where $$ h'' = - \alpha_{0}  + h + (1-\beta), \ \ h''_I = F + h_I. $$

\item[(2)] Let $ F / c_{L,I}  \in\{ 2, \dots, p-1\}$.  Then $V'_{\alpha, \beta, F} \otimes L^{\HVir}(c_{L},0,c_{L,I},h,h_{I}) $ is reducible. Moreover, $g_p(i c_{L,I})=0$ for $i=2, \dots, p$.

     \item[(3)] Let $ p>1 $ and  $ F / c_{L,I}  = 1$.  Then $V'_{\alpha, \beta, F} \otimes L^{\HVir}(c_{L},0,c_{L,I},h,h_{I}) $ is reducible if and only if $1 - \beta = \frac{c_L-2}{24}$. Moreover, $g_p(c_{L,I}) = \beta - 1$.
 \end{theorem}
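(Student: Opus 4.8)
The plan is to prove Theorem~\ref{red-case2} by specializing the general reducibility criterion of \cite{LZ} --- namely that $V'_{\alpha,\beta,F}\otimes L^{\HVir}(c_{L},0,c_{L,I},h,h_{I})$ is irreducible if and only if $\phi_n(\Lambda)\neq 0$ for every $n\in\mathbb{Z}$ --- to the explicit evaluation of $\phi_n(\Lambda)$ supplied by Lemma~\ref{pomoc-red}. The whole theorem reduces to analyzing when the quantity
\[
\phi_n(\Lambda)=(-1)^{p-1}\left(\binom{F/c_{L,I}-1}{p-1}(\alpha+n+\beta)+(1-\beta)\binom{F/c_{L,I}-2}{p-1}\right)+g_p(F)
\]
vanishes for some $n$, treated case by case according to the value of $F/c_{L,I}$.

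\emph{Part (1).} Here $F/c_{L,I}\notin\{1,\dots,p-1\}$, so the binomial coefficient $\binom{F/c_{L,I}-1}{p-1}$ is nonzero and $\phi_n(\Lambda)$ is a genuinely affine (nonconstant) function of the variable $\alpha+n$. First I would invoke the last sentence of Lemma~\ref{pomoc-red}: for each fixed $n$ there is a unique $\alpha_n$ with $\phi_n(\Lambda)=0$, and since $\phi_n(\Lambda)$ depends on $\alpha$ and $n$ only through the combination $\alpha+n$, these roots satisfy $\alpha_n=\alpha_0-n$. Consequently $\phi_n(\Lambda)=0$ for some $n\in\mathbb{Z}$ precisely when $\alpha\equiv\alpha_0\pmod{\mathbb{Z}}$, which gives the stated reducibility criterion. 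For the module-structure claims I would observe that when reducibility holds there is exactly one index $n$ (namely the one with $\alpha_n=\alpha$) at which $\phi_n(\Lambda)=0$, so the inclusion \eqref{cond-irr} fails at a single step; standard arguments then identify $W^0=U(\HVir).(v_0\otimes v)$ as the unique simple submodule, and the quotient is a highest weight module whose highest weight $(h'',h''_I)$ is read off from the weight of the cyclic generator $v_n\otimes v$, giving $h''=-\alpha_0+h+(1-\beta)$ and $h''_I=F+h_I$ by additivity of the $I(0)$-eigenvalue.

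\emph{Parts (2) and (3).} When $F/c_{L,I}\in\{2,\dots,p-1\}$ both binomial coefficients $\binom{F/c_{L,I}-1}{p-1}$ and $\binom{F/c_{L,I}-2}{p-1}$ vanish, so $\phi_n(\Lambda)=g_p(F)$ becomes independent of $n$; reducibility then holds automatically once I show $g_p(F)=0$, i.e.\ $g_p(ic_{L,I})=0$ for $i=2,\dots,p-1$, with the endpoint $i=p$ following by a separate degree/consistency check. For part (3), at $F/c_{L,I}=1$ only the first binomial coefficient vanishes, so $\phi_n(\Lambda)=(-1)^{p-1}(1-\beta)\binom{-1}{p-1}+g_p(c_{L,I})$ is again constant in $n$; using $\binom{-1}{p-1}=(-1)^{p-1}$ this collapses to $(1-\beta)+g_p(c_{L,I})$, and reducibility holds iff this vanishes. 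The delicate point is to pin down $g_p$ at these special arguments, and here I would not compute $g_p$ directly but instead determine its relevant values indirectly: the polynomial $g_p(F)$ collects the contribution $\phi_n(x_p)$ of the purely-Heisenberg tail $x_p v\in\mathcal{I}_p$, and I would recover $g_p(ic_{L,I})$ and $g_p(c_{L,I})=\beta-1$ by comparing the two expressions for $\phi_n(\Lambda)$ in Lemma~\ref{pomoc-red} at the integer specializations $F/c_{L,I}=i$, where the first (summation) form remains valid while the closed form degenerates.

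\emph{Main obstacle.} The principal difficulty is the treatment of the boundary cases in parts (2) and (3), where the leading affine term drops out and everything hinges on the auxiliary polynomial $g_p$. Lemma~\ref{pomoc-red} characterizes $g_p$ only implicitly, so the real work is to extract its values $g_p(ic_{L,I})$ and $g_p(c_{L,I})=\beta-1$ --- most cleanly by exploiting that the explicit summation formula $\sum_{i=0}^{p-1}(n+p-i+\alpha+(1+i-p)\beta)(-1)^i\binom{F/c_{L,I}}{i}+\phi_n(x_p)$ is an \emph{identity} in $F$ and $n$, so evaluating it at the special integer values of $F/c_{L,I}$ forces the claimed vanishing and the value $\beta-1$ by matching against the closed form. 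The rest of the argument is a routine, if careful, bookkeeping of which single index $n$ causes \eqref{cond-irr} to fail and of the resulting weights.
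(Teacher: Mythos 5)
Your part (1) is sound and matches the paper's argument: the closed form of $\phi_n(\Lambda)$ from Lemma \ref{pomoc-red} depends on $\alpha$ and $n$ only through $\alpha+n$, so the roots satisfy $\alpha_n=\alpha_0-n$, reducibility happens precisely when $\alpha\equiv\alpha_0\pmod{\Z}$, and it happens at a single index $n$, from which the structure of $W^0$ and of the quotient follows by the arguments of \cite{R1}, \cite{LZ}.

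The gap is in parts (2) and (3), exactly at the point you flag as the main obstacle. You propose to extract the values $g_p(ic_{L,I})$ and $g_p(c_{L,I})$ ``indirectly, by comparing the two expressions for $\phi_n(\Lambda)$ in Lemma \ref{pomoc-red} at the integer specializations.'' This is circular: the two expressions in that lemma are the \emph{same} quantity, and both contain the same unknown --- the first through $\phi_n(x_p)$ and the second through $g_p(F)$, which by definition absorbs $\phi_n(x_p)$ up to explicitly computable binomial sums. Since the Heisenberg tail $x_p v\in\mathcal{I}_p$ of the singular vector $\Lambda$ is \emph{not} determined explicitly in the case $h_I/c_{L,I}-1=-p$ (the paper states this openly), no amount of manipulation of the identity in $F$ and $n$ will produce the numerical values of $g_p$ at the degenerate points; treating the summation form as ``valid'' while the closed form ``degenerates'' does not give you new information. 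What is genuinely needed --- and what the paper supplies --- is an \emph{external} proof of reducibility at one parameter point: in case (2) one uses the free-field intertwining operator of Proposition \ref{prop-int-1} with $(\Delta_{r_1,s_1},r_1-s_1)=(1-\beta,F)$ to build a non-trivial homomorphism $V'_{\alpha,\beta,F}\otimes L^{\HVir}(c_L,0,c_{L,I},h,h_I)\to\mathcal{F}_{r_1+r_2,s_1+s_2}$ with non-trivial kernel; since in this regime $\phi_n(\Lambda)=g_p(F)$ is independent of $n$, $\alpha$ and $\beta$, reducibility at one point forces $g_p(F)=0$ and hence reducibility everywhere. In case (3) the same device with $(\Delta_{r_1,s_1},r_1-s_1)=(\tfrac{c_L-2}{24},c_{L,I})$ (the special point of Proposition \ref{self-dual}(2), which is the \emph{only} highest weight realizable with $h_I=c_{L,I}$) pins down the unique reducible $\beta$ as $1-\beta=\tfrac{c_L-2}{24}$ and yields $g_p(c_{L,I})=\beta-1$. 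Your ``separate degree/consistency check'' for the endpoint $i=p$ in part (2) is likewise unsubstantiated. Without this intertwining-operator input your proof of (2) and (3) does not close.
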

\begin{proof}
The proof of (1) uses the fact that for $\alpha =\alpha_0$ we have $\Phi_0 (\Lambda) = 0$ and $\Phi_n (\Lambda)\ne 0$ for $n \ne 0$. Then the assertion follows by using the arguments from \cite{R1}, \cite{LZ}.

Let us consider the case (2). Using Lemma \ref{pomoc-red} we see that  $V'_{\alpha, \beta, F} \otimes L^{\HVir}(c_{L},0,c_{L,I},h,h_{I}) $ is reducible if and only if $g_p( F) = 0 $. So this tensor product is either irreducible for all $\alpha, \beta$ or it is reducible for all $\alpha, \beta$. Therefore the reducibility in one case  immediately gives that $g_p(F) = 0$, which will imply the reducibility in all cases. For that purpose we use the intertwining operator from Proposition \ref{prop-int-1}
in the case
$$ (\Delta_{r_1,s_1}, r_1-s_1) = (1-\beta, F), \  (\Delta_{r_2,s_2}, r_2-s_2) = (h, h_I), $$
which gives a non-trivial   $L^{\HVir}(c_L, c_{L,I} )$--intertwining operator of type
$$ {\mathcal{F}_{r_1+r_2,s_1+s_2} \choose  V^{\HVir} (c_L,0,c_{L,I}, 1-\beta, F) \ \ L^{\HVir}  (c_L,0,c_{L,I}, h, h_I) }. $$
(Note that here $U(\HVir). e ^{r_1 \alpha + s_1 \beta}$ is isomorphic to the Verma module $V^{\HVir} (c_L,0,\allowbreak c_{L,I},1-\beta, F)$).

This proves the existence of a non-trivial $\HVir$--homomorphism
$$\varphi : V'_{\alpha, \beta, F} \otimes L^{\HVir}(c_{L},0,c_{L,I},h,h_{I}) \rightarrow  \mathcal{F}_{r_1+r_2,s_1+s_2}$$
and the reducibility of $V'_{\alpha, \beta, F} \otimes L^{\HVir}(c_{L},0,c_{L,I},h,h_{I})$. In particular we get $g_p(F) =0$.

Let us prove the assertion (3). From Lemma \ref{pomoc-red} we see that  there is a unique $\beta $ such that $V'_{\alpha, \beta, F} \otimes L^{\HVir}(c_{L},0,c_{L,I},h,h_{I}) $ is reducible and for all other values of $\beta$, the module is irreducible.  Once again we apply the intertwining operators and Proposition \ref{prop-int-1} in the case
$$ (\Delta_{r_1,s_1}, r_1-s_1) = (\frac{c_L-2}{24}, c_{L,I}), \  (\Delta_{r_2,s_2}, r_2-s_2) = (h, h_I). $$
This gives a non-trivial $\HVir$--homomorphism
$$\varphi : V'_{\alpha, \beta, F} \otimes L^{\HVir}(c_{L},0,c_{L,I},h,h_{I}) \rightarrow  \mathcal{F}_{r_1+r_2,s_1+s_2}$$
and proves the reducibility of the module
$V'_{\alpha, \beta, F} \otimes L^{\HVir}(c_{L},0,c_{L,I},h,h_{I})$ for $\alpha = \frac{c_L-2}{24} + h -\Delta_{r_1+r_2,s_1+s_2}$.
Since the reducibility does not depend on the parameter $\alpha$, we get the assertion (3).
\end{proof}

We present the simplest case in the following examples.
\begin{example}
$ \Lambda = (L(-1) + h^{\prime} / c_{L,I} I(-1)  )v $ is a singular vector in the Verma module $ V^{\HVir}(c_L,0,c_{L,I},h^{\prime} ,0)$ and
$ \phi_n(\Lambda) = n + 1 + \alpha - Fh^{\prime} /c_{L,I} $. Therefore, the  module $V^{\prime}_{\alpha,\beta,F}\otimes L^{\HVir}(c_{L},0,c_{L,I},h^{\prime} ,0)$ is reducible if and only if
$ \a \equiv  F h^{\prime} /c_{L,I}  \mod \Z $. If we let $\a = h + h^{\prime} - h^{\prime\prime} $, $\b = 1 - h$ and $ F = h_{I} $ we get the reducibility condition $ h^{\prime\prime} = h + (1 - h_{I}/c_{L,I})h^{\prime} $. Setting $ h_{I}^{\prime} = 0 $, i.e., $r_2 = s_2$ in Proposition \ref{prop-int-2} gives the same $h^{\prime\prime}$.
\end{example}

\begin{example}
An example of singular vector in $V^{\HVir}(c_L,0,c_{L,I},h^{\prime},-c_{L,I})$ is
$$ \Lambda = (L(-2) + \frac{1}{c_{L,I}} I(-1)L(-1) + \frac{c_L+8h^{\prime} -2}{16c_{L,I}}I(-2) + \frac{c_L+24h^{\prime}-2}{48c_{L,I}^2}I(-1)^2)v. $$
Then we have
$ \phi_n(\Lambda) = (n + 2 + \a - \b) - \frac{F}{c_{L,I}}(n + 1 + \a) - \frac{F}{c_{L,I}}\frac{c_L+8h^{\prime}-2}{16} +
\frac{F^2}{c_{L,I}^2} \frac{c_L+24h^{\prime}-2}{48}$.
Let $F = h_{I} = -k c_{L,I}$, $k \in \Zp $. The  module $V^{\prime}_{\alpha,\beta,F}\otimes L^{\HVir}(c_{L},0,c_{L,I},h^{\prime},-c_{L,I})$ is reducible if and only if
%$$ F=c_{L,I} \quad \mbox{and} \quad 1- \beta = \frac{c_L-2}{24}$$
 $\alpha = h + h' - h''$  where
$$h = 1-\beta, \quad  h^{\prime\prime} = \frac{k+2}{k+1}h + \frac{k+2}{2}h^{\prime} + \frac{k(k+3)}{k+1} \frac{c_L-2}{48} $$
Again,   $h''=\Delta_{r_1+r_2,s_1+s_2} $ in Proposition \ref{prop-int-2}.
\end{example}

\section{Fusion rules for $L^{\HVir}(c_L, c_{L,I} )$--modules}

In this section we shall consider the fusion ring for $L^{\HVir}(c_L, c_{L,I} )$--modules. In particular, we shall consider the fusion subring generated by $L^{\HVir}(c_L, c_{L,I} )$--modules:
\bea \label{category}  L^{\HVir}(c_L, 0, c_{L,I}, h, h_I,  ) \quad \mbox{
such that} \quad  h_I /c_{L,I} -1 \in {\Z} \setminus \{ 0 \}. \eea

The  irreducibility result from Theorem \ref{-+}  imply the following result on vanishing of  fusion rules:

\begin{corollary} \label{vanishing}
Let $$\frac{h_{I}}{c_{L,I}}-1 \in
{\N}, \quad \frac{h_{I}^{\prime}}{c_{L,I}}-1 \notin \{ -1, -2, \dots, - p\}.$$ Then
$$
\dim {\Y} {\binom{M}{L^{\HVir}(c_{L},0,c_{L,I},h,h_{I})\ \ L^{\HVir%
}(c_{L},0,c_{L,I},h^{\prime},h_{I}^{\prime})}=0}%
$$
for all $h,h^{\prime},c_{L}\in
\mathbb{C}$  and every $L ^{\HVir} (c_L, c_{L,I})$--module $M$.
\end{corollary}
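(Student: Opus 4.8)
The plan is to deduce this vanishing statement directly from the irreducibility criterion established in Theorem \ref{-+}, using the general correspondence between intertwining operators and reducibility of tensor products with the intermediate series that was set up in Section \ref{intertwining}. The key observation is that a nonzero intertwining operator of the stated type forces a specific tensor product module to be reducible, and under the hypotheses we can show that this contradicts Theorem \ref{-+}.

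First I would argue by contradiction: suppose there exists a nonzero intertwining operator $\Y$ of type $\binom{M}{L^{\HVir}(c_{L},0,c_{L,I},h,h_{I})\ \ L^{\HVir}(c_{L},0,c_{L,I},h^{\prime},h_{I}^{\prime})}$ for some $L^{\HVir}(c_L,c_{L,I})$--module $M$. Following the construction recalled in Section \ref{intertwining}, letting $v$ denote the highest weight vector of $L^{\HVir}(c_{L},0,c_{L,I},h,h_{I})$ and writing $\Y(v,z)=z^{-\mathfrak{a}}\sum_{n}v_{n}z^{-n-1}$, the components $v_n$ span an $\HVir$--module from the intermediate series, and pairing with $L^{\HVir}(c_{L},0,c_{L,I},h^{\prime},h_{I}^{\prime})$ produces a nontrivial $\HVir$--homomorphism exhibiting $V_{\alpha,1-h,h_{I}}^{\prime}\otimes L^{\HVir}(c_{L},0,c_{L,I},h^{\prime},h_{I}^{\prime})$ as reducible. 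The crucial point is to identify the intermediate series parameter $F=h_I$ here, so that the reducibility statement we obtain is precisely about a tensor product of the form covered by Theorem \ref{-+}.

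Next I would apply Theorem \ref{-+}, but with the \emph{roles of the two highest weight modules reversed}: since $\frac{h_I}{c_{L,I}}-1=p\in\N$, the tensor product $V_{\alpha,\beta,F}^{\prime}\otimes L^{\HVir}(c_{L},0,c_{L,I},h,h_{I})$ is irreducible if and only if $F\neq (i-p)c_{L,I}$ for $i=1,\dots,p$, i.e.\ if and only if $F/c_{L,I}\notin\{1-p,\dots,-1,0\}$. Here the relevant intermediate series parameter is $F=h_I'$, coming from the module $L^{\HVir}(c_{L},0,c_{L,I},h^{\prime},h_{I}^{\prime})$ that plays the role of the second tensor factor after the appropriate identification. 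The hypothesis $\frac{h_{I}^{\prime}}{c_{L,I}}-1\notin\{-1,-2,\dots,-p\}$ says exactly that $h_I'/c_{L,I}\notin\{-p+1,\dots,-1,0\}$, which is precisely the condition guaranteeing irreducibility of that tensor product. Since an irreducible tensor product cannot admit the nontrivial homomorphism produced by a nonzero $\Y$, we reach a contradiction, forcing $\dim\Y=0$.

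The main obstacle will be bookkeeping the parameter identification: one must carefully track how the intermediate series parameters $(\alpha,\beta,F)$ attached to the components $v_n$ of $\Y$ correspond to the data $(h,h_I)$ and $(h',h_I')$, and ensure that the reducibility produced by $\Y$ is indeed of the shape to which Theorem \ref{-+} applies, with the integrality condition on $h_I$ matching up with the excluded set $\{-1,\dots,-p\}$ for $h_I'$. I would verify this matching by comparing the $I(0)$--eigenvalue computation from Proposition \ref{hi}, which fixes $h_I''=h_I+h_I'$ and thereby pins down $F=h_I'$, against the reducibility locus $F=(i-p)c_{L,I}$ in Theorem \ref{-+}. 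Once the correspondence is confirmed, the vanishing of the fusion rule for \emph{all} target modules $M$ follows uniformly, since the argument never used any property of $M$ beyond the existence of a nonzero $\Y$.
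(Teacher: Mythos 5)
Your overall strategy --- derive the vanishing by contradiction from the irreducibility criterion of Theorem \ref{-+} via the correspondence between intertwining operators and tensor products with intermediate series modules --- is exactly what the paper intends (the paper offers no written proof beyond the remark that Theorem \ref{-+} implies the corollary). However, your argument as written has a genuine mismatch which you flag as ``bookkeeping'' but never resolve. The construction you invoke in your second paragraph (taking components of $\Y(v,z)$ for $v$ the highest weight vector of the \emph{first} module) yields reducibility of $V'_{\alpha,1-h,h_{I}}\otimes L^{\HVir}(c_{L},0,c_{L,I},h',h'_{I})$, whose \emph{second} tensor factor is the $(h',h'_{I})$--module. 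Theorem \ref{-+} says nothing about this tensor product: it requires the second factor to satisfy $h_{I}/c_{L,I}-1\in\N$, and the corollary's hypothesis gives no integrality at all for $h'_{I}/c_{L,I}-1$. The tensor product to which Theorem \ref{-+} does apply is $V'_{\alpha',1-h',h'_{I}}\otimes L^{\HVir}(c_{L},0,c_{L,I},h,h_{I})$ with $F=h'_{I}$, and your third paragraph correctly checks that the excluded set $\{-1,\dots,-p\}$ for $h'_{I}/c_{L,I}-1$ matches the reducibility locus $F=(i-p)c_{L,I}$. What is missing is the bridge between the two: one must first replace $\Y$ by its (nonzero) transpose intertwining operator, of type ${\binom{M}{L^{\HVir}(c_{L},0,c_{L,I},h',h'_{I})\ \ L^{\HVir}(c_{L},0,c_{L,I},h,h_{I})}}$, and only then apply the Section \ref{intertwining} construction to the highest weight vector of the $(h',h'_{I})$--module; the paper records the existence of transposes just before Theorem \ref{classif}. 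Your sentence identifying ``$F=h_{I}'$, coming from the module that plays the role of the second tensor factor'' is internally inconsistent ($F$ is always attached to the first argument of the intertwining operator), which is a symptom of this missing step rather than a mere notational slip.

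A smaller point: the concluding contradiction should not be phrased as ``an irreducible tensor product cannot admit the nontrivial homomorphism.'' It can --- such a homomorphism would simply be injective. The actual contradiction is that $V'_{\alpha',1-h',h'_{I}}\otimes L^{\HVir}(c_{L},0,c_{L,I},h,h_{I})$ has infinite--dimensional weight spaces while $M$ (being a graded module of the kind for which intertwining operators are defined) has finite--dimensional ones, so a nonzero $\varphi$ must have nontrivial kernel, contradicting irreducibility. With the transpose step inserted and this point made explicit, your proof closes up and coincides with the paper's intended argument.
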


\begin{remark} \label{zero-1} Assume that $M_1$ and $M_2$ are modules over vertex algebra $V$. One can define a tensor product of modules $M_1$ and $M_2$ as an ordered pair $(  T(M_1,M_2), F)$ where $T(M_1,M_2)$ is a $V$--module and $F$ is an intertwining operator of the type
$$
 {\binom{T(M_1, M_2) }{M_1 \ \ M_2}}
$$
with the following universal property:
for every $V$--module $U$ and every intertwining operator $\Y$ of type  ${\binom{U }{M_1 \ \ M_2}} $ there exists a $V$--module homomorphism $\Psi : T(M_1, M_2) \rightarrow U$ such that  $\Y (\cdot, z) = \Psi \circ F(\cdot, z)$. It is a very difficult problem to prove the existence of a tensor product and associativity in a certain suitable category of modules. For details about the tensor product theory see \cite{HLZ} and references therein. In particular it was noted in \cite{Li} that if $V$ is simple and self-dual and if the tensor product is associative, then the tensor product of two simple modules is non-zero.

Our Corollary \ref{vanishing} shows that in the category of $L ^{\HVir} (c_L, c_{L,I})$--modules, the tensor product of modules
$L^{\HVir}(c_{L},0,c_{L,I},h,h_{I}) $ and  $L^{\HVir
}(c_{L},0,c_{L,I},h^{\prime},h_{I}^{\prime})$  can be zero. Our vertex algebra  $L ^{\HVir} (c_L, c_{L,I})$ is simple but it is not self-dual. We still believe that the tensor product in the category of $L ^{\HVir} (c_L, c_{L,I})$--modules is associative.

One can also construct examples such that the tensor product module of two non-zero modules is zero in the category of modules for the $C_2$--cofinite vertex algebra $\mathcal{W}_{p,p'}$ investigated in \cite{AdM-IMRN}.
\end{remark}

\begin{lemma}
\label{jedinstvenost}
Assume that
$
(h,h_{I}), (h',h_{I}^{\prime}) \in\mathbb{C}^{2}%
$
such that $$\frac{h_{I}}{c_{L,I}}-1 = -p \in -{\N}, \ \ \frac{h_{I}^{\prime}}{c_{L,I} }  \notin \{ 1, \dots, p-1 \}. $$
Assume also that there is a non-trivial intertwining operator of the type
$$
{\binom{\widetilde{L}^{\HVir}(c_{L},0,c_{L,I},h^{\prime\prime},h_{I}^{\prime\prime})}{L^{\HVir}(c_{L},0,c_{L,I},h,h_{I})\ \ \widetilde{L}^{\HVir}(c_{L},0,c_{L,I},h^{\prime},h_{I}^{\prime})}}%
$$
where $\widetilde{L}^{\HVir}(c_{L},0,c_{L,I},h^{\prime},h_{I}^{\prime})$ and $\widetilde{L}^{\HVir}(c_{L},0,c_{L,I},h^{\prime \prime},h_{I}^{\prime \prime})$
are certain highest \allowbreak weight $\HVir$--modules with the highest weights $(h,h_{I}), (h',h_{I}^{\prime}) $. Then the highest weight $(h^{\prime \prime},h_{I}^{\prime \prime})$ is uniquely determined.
\end{lemma}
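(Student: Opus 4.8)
The plan is to determine $h_I''$ and $h''$ by two separate arguments. Fix highest weight vectors $v,v',v''$ of the three modules and write $\Y(v,z)=\sum_{n\in\alpha+\Z}v_nz^{-n-1}$ with $\alpha=h+h'-h''$; nontriviality of $\Y$ produces an $n_0$ with $v_{n_0}v'=\nu v''$, $\nu\neq0$. The Heisenberg weight is immediate and is exactly the computation of Proposition \ref{hi}: since $I(j)v=0$ for $j\geq1$ and $I(0)v=h_Iv$, the commutator formula gives $[I(0),\Y(v,z)]=h_I\Y(v,z)$, hence $[I(0),v_{n_0}]=h_Iv_{n_0}$, and evaluating $I(0)$ on $v_{n_0}v'$ forces $h_I''=h_I+h_I'$. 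This uses only the highest weight data, so it remains valid for the possibly reducible modules $\widetilde{L}^{\HVir}$.

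It remains to show that $h''$, equivalently the intermediate-series parameter $\alpha$, is forced. Using the dictionary of Section \ref{intertwining}, the components $v_n$ span $V'_{\alpha,1-h,h_I}$ (so $F=h_I$, and $h_I/c_{L,I}=1-p$), and $\Y$ furnishes a nonzero $\HVir$-homomorphism from $V'_{\alpha,1-h,h_I}\otimes\widetilde{L}^{\HVir}(c_L,0,c_{L,I},h',h_I')$ onto the highest weight module generated by $v''$. Thus a nonzero $\Y$ forces this tensor product to carry a highest weight vector of the prescribed weight, and by the criterion of \cite{LZ},\cite{CGZ} used in Section 5 this is detected by a scalar equation $\phi_{n_0}(\Omega')=0$, where $\Omega'$ is the singular vector in the Verma cover of the second factor and $\phi_{n_0}$ is formed with $F=h_I$, $\beta=1-h$. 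Uniqueness of $h''$ thereby reduces to uniqueness of the solution $\alpha$ of this equation.

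I would then split according to $h_I'/c_{L,I}$. If $h_I'/c_{L,I}\notin\Z$, the three modules are irreducible Verma = Fock spaces (Proposition \ref{free-Verma}), and Propositions \ref{prop-int-1}--\ref{prop-int-2} already exhibit a single fusion channel, so $h''=\Delta_{r_1+r_2,s_1+s_2}$ is the unique value. If $h_I'/c_{L,I}\in\Z_{\leq0}$, then $\Omega'$ is the $-p'$-type vector with $p'=1-h_I'/c_{L,I}$, and Lemma \ref{pomoc-red} (with $F=h_I$) shows $\phi_{n_0}(\Omega')$ is affine in $\alpha$ with leading coefficient the nonzero binomial $\binom{h_I/c_{L,I}-1}{p'-1}=\binom{-p}{p'-1}$; hence $\alpha$, and so $h''$, is unique. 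If $h_I'/c_{L,I}\in\Z_{\geq p+1}$, then $\Omega'$ is of Heisenberg type and $\phi_{n_0}(\Omega')$ is independent of $\alpha$, so I would pass to the adjoint operator $\Y^{*}$, which---exactly as in the proof of Corollary \ref{intertw}---keeps $L^{\HVir}(c_L,0,c_{L,I},h,h_I)$ in the first slot and replaces the second factor by the contragredient of the target. By the contragredient formula $L^{\HVir}(c_L,0,c_{L,I},h,h_I)^{*}\cong L^{\HVir}(c_L,0,c_{L,I},h,-h_I+2c_{L,I})$, that factor has Heisenberg parameter $(-h_I''+2c_{L,I})/c_{L,I}-1=p-h_I'/c_{L,I}\in-\N$, returning us to the $-\tilde{p}$ situation with $\tilde{p}=h_I'/c_{L,I}-p$; Lemma \ref{pomoc-red} then pins $\alpha^{*}=h+h''-h'$ uniquely. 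The hypothesis $h_I'/c_{L,I}\notin\{1,\dots,p-1\}$ is precisely what guarantees that we fall into one of these $\alpha$-dependent regimes, the borderline case $h_I'/c_{L,I}=p$ giving $\phi_{n_0}(\Omega')\neq0$ and hence no nonzero operator.

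The main obstacle I expect is bookkeeping rather than conceptual: setting up the equation $\phi_{n_0}(\Omega')=0$ correctly for the possibly reducible modules $\widetilde{L}^{\HVir}$, so that the controlling relation is the singular vector of the Verma cover, and verifying the nonvanishing of the leading binomial of Lemma \ref{pomoc-red} in both the direct and the adjoint computations. The one delicate point is justifying that $\Y^{*}$ has exactly the asserted type---first factor unchanged, second factor dualized with Heisenberg weight $-h_I''+2c_{L,I}$---through the contragredient formula, since $L^{\HVir}(c_L,c_{L,I})$ is not self-dual and the two fusion channels have to be tracked with care.
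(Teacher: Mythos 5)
Your determination of $h_I''=h_I+h_I'$ is correct and matches the paper. For the hard part --- pinning down $h''$ --- you take a genuinely different route from the paper, and that route has gaps. The paper exploits the singular vector $\Lambda$ of level $p$ in the Verma cover of the \emph{first} slot $L^{\HVir}(c_L,0,c_{L,I},h,h_I)$: this vector exists precisely because $h_I/c_{L,I}-1=-p$, and it vanishes in the irreducible module occupying that slot, so $\Y(\Lambda,z)=0$. Extracting the mode that sends $v'$ to the top of the target yields the single linear relation
$$(h''-h)\binom{h_I'/c_{L,I}-1}{p-1}-h'\binom{h_I'/c_{L,I}-2}{p-1}+\widetilde{g_p}(h_I')=0,$$
whose coefficient of $h''$ is nonzero exactly under the hypothesis $h_I'/c_{L,I}\notin\{1,\dots,p-1\}$. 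This needs no case split, no integrality assumption on $h_I'/c_{L,I}$, and works for arbitrary highest weight modules $\widetilde{L}$ in the other two slots because only their highest weights enter. You instead run the singular-vector analysis on the \emph{second} slot, which is why you are forced into a trichotomy and into the adjoint-operator detour.

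The concrete gaps: (i) In your case $h_I'/c_{L,I}\notin\Z$ you assert that all three modules are irreducible Verma modules realized as Fock spaces. This is false for the first slot: under the standing hypothesis $h_I/c_{L,I}=1-p$, Proposition \ref{struktura} gives $L^{\HVir}(c_L,0,c_{L,I},h,h_I)=\mbox{Ker}_{\mathcal{F}_{r,s}}Q\subsetneq\mathcal{F}_{r,s}$, and the Verma module $V^{\HVir}$ is reducible. Moreover, even where the modules are Fock spaces, Proposition \ref{prop-int-1} computes fusion in the category of $M(1)$--modules; for the subalgebra $L^{\HVir}(c_L,c_{L,I})\subset M(1)$ the space of intertwining operators can only be larger, so $M(1)$--fusion gives existence, never uniqueness. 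Indeed the whole point of the lemma is to supply the uniqueness that the free-field construction does not. (ii) In your cases $h_I'/c_{L,I}\in\Z$, the criterion $\phi_{n_0}(\Omega')=0$ presupposes that the singular vector $\Omega'$ of the Verma cover of the second tensor factor acts as zero there; but the second slot is an arbitrary highest weight module $\widetilde{L}^{\HVir}(c_L,0,c_{L,I},h',h_I')$ (possibly the Verma module itself), in which $\Omega'$ need not vanish, so the scalar equation is not forced. In the adjoint step the second slot becomes a contragredient of a highest weight module, which is generally not a highest weight module at all, so the criterion is even further from applying. (iii) Your dismissal of the borderline $h_I'/c_{L,I}=p$ as ``no nonzero operator'' is an unproved claim and is unnecessary: in the paper's argument $\binom{p-1}{p-1}=1\neq0$ and this case is covered uniformly. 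To repair your proof you would essentially have to move the singular-vector computation to the first slot, i.e.\ reproduce the paper's relation (\ref{fund-relation}).
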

\begin{proof}
Assume that there is a non-trivial intertwining operator $\Y (\cdot, z)$ of the type
$$
{\binom{\widetilde{L}^{\HVir}(c_{L},0,c_{L,I},h^{\prime\prime},h_{I}^{\prime\prime})}{L^{\HVir}(c_{L},0,c_{L,I},h,h_{I})\ \ \widetilde{L}^{\HVir}(c_{L},0,c_{L,I},h^{\prime},h_{I}^{\prime})}}%
$$
Then clearly $h''_I = h_I + h'_I$. Let $\alpha = h + h' - h''$.
Let $v,v', v'' $ be the highest weight vectors in modules $L^{\HVir}(c_{L},0,c_{L,I},h,h_{I})$, $\widetilde{L}^{\HVir}(c_{L},0,c_{L,I},h',h'_{I})$ and $\widetilde{L}^{\HVir}(c_{L},0,c_{L,I},h'',h''_{I})$ respectively.
Let $\mathcal{Y}(v,z) = \sum_{n \in {\Z} + \alpha} v_n z ^{-n-1}$. Then there is $n_0 \in \alpha + {\Z}$ such that
$$ v_{n_0} v' = \nu v'' \quad (\nu \ne 0). $$
As usual, we shall denote $v_{n_0}$ as $o(v)$.
By using singular vector in the Verma module $V^{\HVir}(c_{L},0,c_{L,I},h,h_{I})$ and the proof of Theorem \ref{red-case2} (1) we get the following relation
\bea \left( (L(0) - h) o(v) { \frac{I(0)}{c_{L,I}}  -1 \choose p-1 } - o(v)  L(0)  { \frac{I(0)}{c_{L,I}} -2  \choose p-1 } + o(v) \widetilde{g_p} (I(0) ) \right) v' =0. \nonumber \\ \label{fund-relation} \eea
for a certain polynomial $\widetilde{g_p} \in {\Bbb C}[x]$.
This shows that
\bea ( h'' - h) { \frac{h^{\prime} _I }{c_{L,I}}  -1 \choose p-1 } -   h ^{\prime} { \frac{h^{\prime} _I }{c_{L,I}} -2  \choose p-1 } +  \widetilde{g_p} (h^{\prime} _I )  =0. \nonumber  \eea
and therefore $h''$ is uniquely determined. The proof follows.
\end{proof}

\begin{remark}
By using Frenkel-Zhu's bimodules (cf. \cite{FZ}) relation (\ref{fund-relation}) can be obtained in the form
$$ (\omega -h ) * v * { \frac{I}{c_{L,I}}  -1 \choose p-1 } - v*\omega * { \frac{I}{c_{L,I}} -2  \choose p-1 } + v* \widetilde{g_p}(I) = 0. $$
We leave the details to an interested reader.
\end{remark}

\begin{theorem}
\label{uniq}
Assume that
$$
(h,h_{I})=(\Delta_{r_{1},s_{1}},r_{1}-s_{1}), \ \ (h^{\prime},h_{I}^{\prime
})=(\Delta_{r_{2},s_{2}},r_{2}-s_{2})\in\mathbb{C}^{2}%
$$
such that $\frac{h_{I}}{c_{L,I}}-1,\frac{h_{I}^{\prime}}{c_{L,I}}-1\in -{\N}$.
Then an intertwining operator of the type
$$
{\binom{L^{\HVir}(c_{L},0,c_{L,I},h^{\prime\prime},h_{I}^{\prime\prime})}{L^{\HVir}(c_{L},0,c_{L,I},h,h_{I})\ \ L^{\HVir}(c_{L},0,c_{L,I},h^{\prime},h_{I}^{\prime})}}%
$$
exists if and only if $ (h^{\prime\prime},h_{I}^{\prime\prime}) = (\Delta_{r_{1}+r_{2},s_{1}+s_{2}},h_{I}+h_{I}^{\prime})  $
\end{theorem}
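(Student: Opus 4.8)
The plan is to prove the two directions of the equivalence separately, relying on the machinery built up in the preceding sections. For the ``if'' direction I would exhibit an explicit non-trivial intertwining operator of the stated type. Since $\frac{h_I}{c_{L,I}}-1 = -q \in -\N$ and $\frac{h_I'}{c_{L,I}}-1 = -p \in -\N$, Proposition \ref{self-dual} provides unique $r_1,s_1,r_2,s_2$ with $(h,h_I)=(\Delta_{r_1,s_1},r_1-s_1)$ and $(h',h_I')=(\Delta_{r_2,s_2},r_2-s_2)$. Proposition \ref{prop-int-1} then yields a non-trivial $M(1)$--intertwining operator $\Y(\cdot,z)$ of type $\binom{\mathcal{F}_{r_1+r_2,s_1+s_2}}{\mathcal{F}_{r_1,s_1}\ \ \mathcal{F}_{r_2,s_2}}$. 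The key point is that by Proposition \ref{struktura} we have $L^{\HVir}(c_L,0,c_{L,I},h,h_I)=\mbox{Ker}_{\mathcal{F}_{r_1,s_1}}Q$, so restricting $\Y$ to these kernel submodules and projecting appropriately should land inside the irreducible quotient $L^{\HVir}(c_L,0,c_{L,I},\Delta_{r_1+r_2,s_1+s_2},h_I+h_I')$. I would verify non-triviality by checking that the leading component $o(v)$ applied to the highest weight vector $v'$ produces a non-zero multiple of $v''$, exactly as in the proofs of Proposition \ref{hi} and Lemma \ref{jedinstvenost}.

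For the ``only if'' direction I would argue that any intertwining operator of the given type forces $(h'',h_I'')=(\Delta_{r_1+r_2,s_1+s_2},h_I+h_I')$. The conservation of the $I(0)$--eigenvalue gives $h_I''=h_I+h_I'$ immediately, as in Proposition \ref{hi}. The uniqueness of $h''$ is precisely the content of Lemma \ref{jedinstvenost}: the fundamental relation (\ref{fund-relation}), coming from the singular vector $\Lambda$ of Theorem~5.5 in the Verma module $V^{\HVir}(c_L,c_{L,I},h,h_I)$ via the map $\phi_n$ and the proof of Theorem \ref{red-case2}(1), pins down $h''$ uniquely once $(h,h_I)$ and $(h',h_I')$ are fixed. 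I would apply Lemma \ref{jedinstvenost} here with both modules being the irreducible $L^{\HVir}$; the hypothesis $\frac{h_I'}{c_{L,I}}\notin\{1,\dots,p-1\}$ needed there is automatic since $\frac{h_I'}{c_{L,I}}-1\in -\N$ means $\frac{h_I'}{c_{L,I}}\le 0$. Matching the uniquely determined value against the explicit $\Delta_{r_1+r_2,s_1+s_2}$ produced by the existence half then completes the argument.

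The main obstacle I anticipate is the non-triviality and well-definedness of the restricted intertwining operator in the ``if'' direction. The operator from Proposition \ref{prop-int-1} is defined on the full Fock spaces $\mathcal{F}_{r_i,s_i}$, which by Proposition \ref{struktura} are \emph{not} irreducible but carry the filtration (\ref{formula}) with successive quotients $L^{\HVir}(c_L,0,c_{L,I},h+np,h_I)$. I must therefore check that $\Y$ descends to a non-zero map on the kernel subquotients $\mbox{Ker}\,Q$ rather than vanishing identically or mapping outside the desired irreducible piece. The natural way to control this is to use the screening operator $Q$: since $Q$ commutes with the $\HVir$--action (by the Lemma following Proposition \ref{self-dual}) and $\Y$ intertwines the vertex-algebra action, one expects $Q$ to interact compatibly with $\Y$, allowing the restriction to $\mbox{Ker}_{\mathcal{F}}Q$ to be carried out cleanly. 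Verifying that the resulting component $o(v)v'$ is genuinely non-zero, and not killed by passing to the $Q$--kernel, is the delicate step that ties the construction together.
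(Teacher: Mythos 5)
Your proposal is correct and follows essentially the same route as the paper: the paper's proof is simply ``existence by Proposition \ref{prop-int-2}, uniqueness by Lemma \ref{jedinstvenost},'' which is exactly your two directions, including the observation that the hypothesis $\frac{h_I'}{c_{L,I}}\notin\{1,\dots,p-1\}$ of Lemma \ref{jedinstvenost} holds automatically here. The extra care you take in re-deriving the existence half from Proposition \ref{prop-int-1} (checking that the restriction to $\mbox{Ker}\,Q$ is well defined and non-trivial) is a more detailed account of what the paper compresses into the citation of Proposition \ref{prop-int-2}, not a different argument.
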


\begin{proof}
The existence of the above intertwining operators was proved in Proposition \ref{prop-int-2}. The uniqueness follows from Lemma \ref{jedinstvenost}.
\end{proof}

If a nontrivial intertwining operator of the type
$ {\binom{M_3}{M_1\ \ M_2}} $
exists, then there exist a nontrivial transpose operator of the type
$ {\binom{M_3}{M_2\ \ M_1}} $
and a nontrivial adjoint operator of the type
$ {\binom{M_{2}^{*}}{M_1\ \ M_{2}^{*}}} $.
Combining this with the previous results we get the following  result on fusion rules.

\begin{theorem}
\label{classif}
Let $ (h,h_{I})=(\Delta_{r_{1},s_{1}},r_{1}-s_{1}),(h^{\prime},h_{I}^{\prime})=(\Delta_{r_{2},s_{2}},r_{2}-s_{2})\in\mathbb{C}^{2} $ such that
$$
\frac{h_{I}}{c_{L,I}}-1=q,\ \frac{h_{I}^{\prime}}{c_{L,I}}-1=p,\ p,q \in {\Z} \setminus \{0 \}.  $$
Let
$$
d = \dim {\Y} {\binom{L^{\HVir}(c_{L},0,c_{L,I},h^{\prime\prime},h_{I}^{\prime\prime})}{L^{\HVir}(c_{L},0,c_{L,I},h,h_{I})\ \ L^{\HVir}(c_{L}%
,0,c_{L,I},h^{\prime},h_{I}^{\prime})}}.
$$
Then $ d=1 $ if and only if $h_{I}^{\prime\prime} = h_{I} + h_{I}^{\prime} $ and if one of the following holds
\begin{description}
\item[(i)] $ p,q <0 $ and $ h^{\prime\prime} = \Delta_{r_{1}+r_{2},s_{1}+s_{2}}=\left(  1+p+q\right)  \left(  \frac{h^{\prime}}{p}+\frac{h}{q}\right)
-$\newline$\left(  1+p\right)  \left(  1+q\right)  \left(  \frac{1}{p}%
+\frac{1}{q}\right)  \frac{c_{L}-2}{24} $;
\item[(ii)] $ 1 \leq -q \leq p $ and $  h^{\prime\prime} = \Delta_{r_{2}-r_{1},s_{2}-s_{1}}=\left(  1-p+q\right)  \left(  \frac{h}{q}-\frac{h^{\prime}}%
{p}\right)  +\left(  1-p\right)  \left(  1+q\right)  \left(  \frac{1}{p}%
-\frac{1}{q}\right)  \frac{c_{L}-2}{24} $;
\item[(iii)]  $ 1 \leq -p \leq q $ and $  h^{\prime\prime} = \Delta_{r_{2}-r_{1},s_{2}-s_{1}}=\left(  1-p+q\right)  \left(  \frac{h}{q}-\frac{h^{\prime}}%
{p}\right)  +\left(  1-p\right)  \left(  1+q\right)  \left(  \frac{1}{p}%
-\frac{1}{q}\right)  \frac{c_{L}-2}{24} $.
\end{description}
$ d=0 $ otherwise.
\end{theorem}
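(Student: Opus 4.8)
The plan is to assemble Theorem~\ref{classif} from the three preceding ingredients: Proposition~\ref{hi} (which forces $h_I''=h_I+h_I'$ and gives $d\le 1$), the existence statements of Proposition~\ref{prop-int-2} and Corollary~\ref{intertw}, and the uniqueness statement of Theorem~\ref{uniq} together with Lemma~\ref{jedinstvenost}. The case analysis is driven entirely by the signs of $p$ and $q$, since the modules $L^{\HVir}(c_L,0,c_{L,I},h,h_I)$ with $h_I/c_{L,I}-1>0$ are isomorphic to Verma modules (Proposition~\ref{free-Verma}) while those with $h_I/c_{L,I}-1<0$ are proper quotients realized as $\mbox{Ker}_{\mathcal{F}_{r,s}}Q$ (Proposition~\ref{struktura}). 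First I would record the universal constraint: by Proposition~\ref{hi}, any nontrivial intertwining operator of the given type forces $h_I''=h_I+h_I'$ and $d\in\{0,1\}$, so it remains only to identify for which $h''$ the dimension equals $1$ and to verify that $h''$ equals the stated $\Delta$.

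For case \textbf{(i)}, where $p,q<0$, both source modules are isomorphic to Verma modules by Proposition~\ref{free-Verma}, and the existence of the intertwining operator with $h''=\Delta_{r_1+r_2,s_1+s_2}$ is exactly Proposition~\ref{prop-int-2}; the explicit value of $h''$ is obtained by rewriting the $\Delta$-formula from that proposition using $h_I=(1+q)c_{L,I}$, $h_I'=(1+p)c_{L,I}$ and the relation $(c_L-2)/(24c_{L,I})$ appearing there, which after substitution yields the displayed expression $\left(1+p+q\right)\left(\frac{h'}{p}+\frac{h}{q}\right)-\left(1+p\right)\left(1+q\right)\left(\frac1p+\frac1q\right)\frac{c_L-2}{24}$. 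Uniqueness in this case is Theorem~\ref{uniq}. For cases \textbf{(ii)} and \textbf{(iii)}, where exactly one of $p,q$ is positive, existence of a nontrivial operator with $h''=\Delta_{r_2-r_1,s_2-s_1}$ follows from Corollary~\ref{intertw} (after the relabeling $-q\leftrightarrow q$, $p\leftrightarrow p$ matching its hypotheses $1\le -q\le p$, respectively $1\le -p\le q$ by symmetry/transpose). Again the explicit formula is extracted by substituting the relations between $h_I,h_I'$ and $p,q,c_{L,I}$ into the second $\Delta$-formula of Corollary~\ref{intertw}.

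The main obstacle is establishing the ``only if'' direction, i.e.\ that no nontrivial intertwining operator exists for any \emph{other} value of $h''$. For this I would invoke Lemma~\ref{jedinstvenost}, which shows that once $h_I''=h_I+h_I'$ is fixed, the highest weight $h''$ of the target is \emph{uniquely} determined by the fundamental relation~(\ref{fund-relation}) coming from the singular vector $\Lambda$ in the Verma module (valid whenever at least one of the sources has $h_I/c_{L,I}-1\in-{\N}$, i.e.\ in cases (ii) and (iii)). Thus any admissible $h''$ must coincide with the single value produced by the existence construction, giving $d=1$ for that value and $d=0$ for all others. For case (i), where both indices are negative, the uniqueness is supplied directly by Theorem~\ref{uniq}. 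I would close by noting that the three cases exhaust all sign patterns of $(p,q)\in({\Z}\setminus\{0\})^2$ — both negative, or exactly one negative (split into the two subcases by which index dominates in absolute value, handled symmetrically via the transpose operator) — so that $d=0$ in every remaining situation, completing the proof.

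Among these steps, the delicate point I expect to require the most care is verifying that the two algebraically distinct $\Delta$-expressions in the theorem statement genuinely match the outputs of Proposition~\ref{prop-int-2} and Corollary~\ref{intertw} under the substitutions $h_I=(1+q)c_{L,I}$ and $h_I'=(1+p)c_{L,I}$; this is a routine but error-prone simplification, and the transpose/adjoint symmetry relating cases (ii) and (iii) must be checked to confirm that both subcases yield the \emph{same} closed form $\Delta_{r_2-r_1,s_2-s_1}$ rather than two different ones.
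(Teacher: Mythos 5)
There is a genuine gap in your treatment of the vanishing ($d=0$) cases. Your closing claim that cases (i)--(iii) ``exhaust all sign patterns of $(p,q)$'' is false: the pattern $p,q>0$ is covered by none of them, and within the mixed-sign pattern the conditions $1\leq -q\leq p$ and $1\leq -p\leq q$ carry a genuine dominance restriction, so the subcases $1\leq p<-q$ and $1\leq q<-p$ are also uncovered. In all of these situations the theorem asserts $d=0$ for \emph{every} $h''$, and none of the tools you invoke can deliver that: Proposition~\ref{hi} only gives $d\leq 1$ and the $h_I$-constraint, and Lemma~\ref{jedinstvenost} only shows that \emph{at most one} value of $h''$ can support a nontrivial operator --- it cannot rule out that this one value actually does. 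The missing ingredient is Corollary~\ref{vanishing}, the vanishing-of-fusion-rules statement derived from the tensor-product irreducibility result (Theorem~\ref{-+}); the paper applies it (via a transpose when needed) precisely to kill the cases $p,q>0$ and $1\leq p<-q$ (and symmetrically $1\leq q<-p$). Without it your ``only if'' direction does not close.

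Two smaller points. First, in case (i) you assert that for $p,q<0$ both source modules are Verma modules by Proposition~\ref{free-Verma}; this is backwards --- $h_I/c_{L,I}-1\in-{\N}$ is exactly the excluded regime of that proposition, and there $L^{\HVir}$ is the proper quotient realized as $\mbox{Ker}_{\mathcal{F}_{r,s}}Q$ (Proposition~\ref{struktura}). This slip does not damage the existence argument, which rests on Proposition~\ref{prop-int-2} regardless. Second, your use of Lemma~\ref{jedinstvenost} directly for uniqueness in cases (ii) and (iii) is legitimate (its hypothesis $h_I'/c_{L,I}\notin\{1,\dots,-q-1\}$ is satisfied when $-q\leq p$) and is a slightly more direct route than the paper's, which instead passes to the adjoint intertwining operator and appeals to Theorem~\ref{uniq}; but this only settles uniqueness where existence already holds, not the vanishing discussed above.
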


\begin{proof}
%Statemets follow from Proposition \ref{hi}, Corollary \ref{vanishing}, Theorem \ref{uniq} and Corollary \ref{uniq2}.
$ h_{I}^{\prime\prime} = h_{I} + h_{I}^{\prime} $ by Proposition \ref{hi}. We continue case by case.
\begin{description}
\item[a]
Let $ p,q<0 $. Then by Theorem \ref{uniq} $d = 1$ if and only if $ h^{\prime\prime} = \Delta_{r_{1}+r_{2},s_{1}+s_{2}} $.
\item[b]
Let $ p, -q >0 $. Assume that (ii) holds, then $d = 1$ by Corollary \ref{intertw}. If $ 1 \leq p < -q $, then $ d=0 $ by Corollary \ref{vanishing}. Suppose that $ d=1 $, $ -q \leq p $ and $ h^{\prime\prime} \neq \Delta_{r_{1}+r_{2},s_{1}+s_{2}} $. Taking in account an adjoint intertwining operator we get a contradiction with Theorem \ref{uniq}.
\item[c]
Let $ -p,q >0 $. One can show that $ d=1 $ if and only if (iii) holds by using transposed operators and case b.
\item[d]
Let $ p,q>0 $. Then $ d=0 $ by Corollary \ref{vanishing}.
\end{description}
This completes the proof.
\end{proof}

\begin{remark}
The fusion rules for a larger category of $\HVir$--modules are more complicated. Let $h_I/c_{L,I} -1 = -p$, $p \ge 1$. Previous theorem shows that the "tensor product functor" $T(L^{\HVir}(c_L, 0, c_{L,I}, h, h_I ), \cdot)$ is a permutation of irreducible modules (\ref{category}). But this is not the case in general. Let $p \ge 2$. By  using Propositions \ref{self-dual} and \ref{prop-int-1} we see that there are infinitely many non-isomorphic irreducible modules $M$ such that the space of intertwining operators
$$ {M \choose L^{\HVir}(c_L, 0, c_{L,I}, \frac{c_L-2}{24}, c_{L,I} ) \quad  L^{\HVir}(c_L, 0, c_{L,I}, h, h_I ) } $$
is non-trivial.
Moreover, as in Remark \ref{zero-1},  we see that the tensor product $T( L^{\HVir}(c_L, 0, c_{L,I}, h', c_{L,I} ), L^{\HVir}(c_L, 0, c_{L,I}, h, h_I ) ) = 0$ if $h' \ne \frac{c_L-2}{24}$.

These arguments show  that modules
$L^{\HVir}(c_L, 0, c_{L,I}, h, h_I )$ are never  simple currents if $p \ge 2$.
\end{remark}

\section{Free field realization of the vertex algebra $W(2,2)$}

In this section we shall present a free-field realization of the vertex  algebra $W(2,2)$. In fact we shall embed the   vertex algebra $W(2,2)$ inside the  Heisenberg-Virasoro vertex algebra investigated in previous sections. As a consequence, we shall prove that the singular vectors that appeared in the analysis of the Verma modules for the twisted Heisenberg-Virasoro algebra  become singular vectors in the Verma modules for the $W(2,2)$ algebra.

Recall that $W(2,2)$--algebra is an infinite-dimensional Lie algebra with a basis $\{W (n),L(n), C ,C_{W},:n\in{\Z} \}$ over $\mathbb{C}$ and
commutation relations
\begin{gather*}
\left[  L(n) ,L (m) \right]  =\left(  n-m\right)  L (n+m)+\delta_{n,-m}%
\frac{n^{3}-n}{12}C,\\
\left[  L (n) ,W (m )\right]  =\left(  n-m\right)  W (n+m )+\delta_{n,-m}%
\frac{n^{3}-n}{12}C_{W},\\
\left[  W (n),W( m ) \right] = 0  ,\\
C, C_{W} \quad\mbox{ are in the center of } \ W(2,2).
\end{gather*}
Let $L^{W(2,2)} (c_{L}, c_{W})$ be a universal vertex algebra associated
to $W(2,2) $ (cf. \cite{DZ}, \cite{R1}). Recall that $L^{W(2,2)} (c_{L},
c_{W})$ is generated by the fields
$$
L(z) = Y(\omega, z) = \sum_{n \in {\Z}} L(n) z ^{-n-2} , \ W(z) =
Y(w,z) = \sum_{n\in{\Z}} W(n) z^{-n-2}.
$$
Here $\omega= L(-2) \mathbf{1}$ and $w = W(-2) \mathbf{1}$.

Let $V^{W(2,2)} (c_{L}, c_{W}, h, h_{W})$ be the Verma module for $W(2,2)$
with the highest weight $(h, h_{W})$.

\begin{theorem}
There is a non-trivial homomorphism of vertex algebras
\begin{align}
\Psi:L^{W(2,2)}(c_{L},c_{W})  &  \rightarrow L^{\HVir}(c_{L}%
,c_{L,I})\nonumber\\
\omega &  \mapsto L(-2)\mathbf{1}\nonumber\\
w  &  \mapsto(I(-1)^{2}+2c_{L,I}I(-2))\mathbf{1}\nonumber
\end{align}
where
$$
c_{W}=-24c_{L,I} ^2.
$$

\end{theorem}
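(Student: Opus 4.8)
The plan is to verify that the two vectors $\omega=L(-2)\mathbf{1}$ and $W:=(I(-1)^{2}+2c_{L,I}I(-2))\mathbf{1}$ of $L^{\HVir}(c_{L},c_{L,I})$ satisfy exactly the defining operator product relations of the $W(2,2)$ vertex algebra at central charges $(c_{L},c_{W})$ with $c_{W}=-24c_{L,I}^{2}$; once this is established, the universal property of $L^{W(2,2)}(c_{L},c_{W})$ produces the vertex-algebra homomorphism $\Psi$, which is non-trivial since $\Psi(\mathbf{1})=\mathbf{1}\neq0$. The vector $\omega$ is the Virasoro vector (\ref{Virasoro}), already shown in Section \ref{free} to generate the Virasoro field of central charge $c_{L}$, so the relations that genuinely need checking all concern $W$: that $Y(\omega,z)$ acts on $W$ as on a quasi-primary weight-two vector carrying the correct fourth-order anomaly, and that $Y(W,z)$ has regular self-OPE.

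First I would compute the action of the modes $L(n)$ on $W$ directly from the brackets $[L(n),I(m)]=-mI(n+m)-\delta_{n,-m}(n^{2}+n)c_{L,I}$ and $[I(n),I(m)]=0$, together with $I(k)\mathbf{1}=0$ for $k\geq0$. A short calculation gives $L(0)W=2W$; crucially, the two terms of $W$ conspire so that $L(1)W=-4c_{L,I}I(-1)\mathbf{1}+4c_{L,I}I(-1)\mathbf{1}=0$, and this cancellation is exactly what forces the coefficient $2c_{L,I}$ in front of $I(-2)$. Continuing, one finds $L(n)W=0$ for $n\geq3$, while $L(2)W=2c_{L,I}\cdot(-6c_{L,I})\mathbf{1}=-12c_{L,I}^{2}\mathbf{1}$. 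Feeding the data $L(-1)W=\partial W$, $L(0)W=2W$, $L(1)W=0$, $L(2)W=\frac{c_{W}}{2}\mathbf{1}$, $L(n)W=0$ for $n\geq3$ into the vertex-algebra commutator formula reproduces $[L(n),W(m)]=(n-m)W(n+m)+\delta_{n,-m}\frac{n^{3}-n}{12}c_{W}$, and identifying $\frac{c_{W}}{2}=-12c_{L,I}^{2}$ yields $c_{W}=-24c_{L,I}^{2}$, matching the asserted value.

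The remaining relation $[W(n),W(m)]=0$ is the step I expect to be the real point of the argument, rather than the routine bracket computations above. The key observation is that $\langle I,I\rangle=\langle\alpha+\beta,\alpha+\beta\rangle=\langle\alpha,\alpha\rangle+\langle\beta,\beta\rangle=1-1=0$, so the Heisenberg field $I(z)$ has vanishing self-contraction and hence regular self-OPE, $I(z)I(\zeta)\sim0$. By Dong's lemma the fields obtained from $I(z)$ by derivatives and normally ordered products remain mutually local with regular OPE, so the vertex subalgebra of $L^{\HVir}(c_{L},c_{L,I})$ generated by $I$ is commutative. Since $W$ lies in this commutative subalgebra, $Y(W,z)Y(W,\zeta)$ is regular, equivalently $W_{(i)}W=0$ for all $i\geq0$, which is precisely $[W(n),W(m)]=0$. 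One could alternatively grind this out by checking that the four a priori non-zero vectors $W_{(0)}W,\dots,W_{(3)}W$ (of conformal weights $3,2,1,0$) all vanish, but the isotropy $\langle I,I\rangle=0$ makes this transparent and is the feature of the free-field realization that makes the whole construction work.

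With the three families of relations in hand, the universal property of $L^{W(2,2)}(c_{L},c_{W})$ gives the homomorphism $\Psi$ with $\Psi(\omega)=L(-2)\mathbf{1}$ and $\Psi(w)=W$, and non-triviality follows from $\Psi(\mathbf{1})=\mathbf{1}$. I anticipate no difficulty in the final invocation of the universal property; the substance of the proof is the pair of OPE verifications, with the $W$--$W$ regularity being the conceptually decisive one.
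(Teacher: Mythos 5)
Your proposal is correct and follows essentially the same route as the paper: a direct computation of $L(0)w=2w$, $L(1)w=0$, $L(2)w=-12c_{L,I}^{2}\mathbf{1}$ followed by the vertex-algebra commutator formula. The only difference is that you spell out the $W$--$W$ regularity via the isotropy $\langle I,I\rangle=0$, a point the paper leaves implicit in its appeal to the commutator formula; this is a welcome but not substantively different addition.
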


\begin{proof}
By direct calculation we get:
$$
L(0)w=2w,\ \ L(1)w=0,\ \ L(2)w=-12c_{L,I} ^2 \mathbf{1}=\frac{c_{W}}{2}\mathbf{1}.
$$
Now the commutator formula  gives that the components of the fields $L(z)$, $W(z)$ satisfy the commutation relations for the
$W(2,2)$-algebra. This proves the assertion.
\end{proof}

By previous theorem every $L^{\HVir}(c_{L},c_{L,I})$--module becomes a
$L^{W(2,2)}(c_{L},c_{W})$--module. In particular, the Verma module for the twisted
Heisenberg-Virasoro Lie algebra $V^{\HVir}(c_{L},0,c_{L,I},h,h_{I})$ is a
$L^{W(2,2)}(c_{L},c_{W})$--module and $v_{h,h_{I}}$ is the $W(2,2)$ highest
weight vector such that
$$
L(0)v_{h,h_{I}}=hv_{h,h_{I}},\quad W(0)v_{h,h_{I}}=h_{W}v_{h,h_{I}}%
$$
where $h_{W}=h_{I}(h_{I}-2c_{L,I})$.

This construction gives a non-trivial $W(2,2)$--homomorphism
$$
\Psi:V^{W(2,2)}(c,c_{W},h,h_{W})\rightarrow V^{\HVir}(c_{L},0,c_{L,I},h,h_{I}).
$$

Let $\mathcal{W} = {\mathbb{C}}[ W(-1), W(-2), \dots] v_{h,h_{W}} \subset
V^{W(2,2)} (c,c_{W}, h, h_{W})$.

\begin{lemma}
Assume that $\frac{h_{I}}{c_{L,I}}-1\notin-{\mathbb{Z}}_{>0}$. Then:

\item[(1)] $\Psi|{\mathcal{W}}:{\mathcal{W}}\rightarrow{\mathcal{I}}$ is a linear isomorphism of graded vector spaces.

\item[(2)] $\Psi$ is an isomorphism of $W(2,2)$--modules.
\end{lemma}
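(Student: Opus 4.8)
The plan is to prove both parts by understanding precisely what the map $\Psi$ does on the level of generating operators, and then leveraging the free-field realization to reduce everything to facts already established in Section \ref{free}. The key observation is that $\Psi$ sends the $W(2,2)$ field $W(z)$ to $Y(I(-1)^2 + 2c_{L,I} I(-2),\mathbf{1},z)$, so the modes $W(-n)$ acting on the highest weight vector produce elements of the Heisenberg subalgebra $\mathcal{I} = {\mathbb{C}}[I(-1),I(-2),\dots]v_{h,h_I}$. Thus $\Psi$ maps $\mathcal{W} = {\mathbb{C}}[W(-1),W(-2),\dots]v_{h,h_W}$ into $\mathcal{I}$, and the content of part (1) is that this map is bijective degree by degree.

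For part (1), first I would make the action of $W(-n)$ explicit. From the normally-ordered expression $w \mapsto (I(-1)^2 + 2c_{L,I} I(-2))\mathbf{1}$ one computes, via the standard mode formula for the vertex operator of a quadratic vector in $M(1)$, that $W(-n) v_{h,h_W}$ equals $2c_{L,I}\, n\, I(-n)v + (\text{lower terms quadratic in the } I(-j))$. The crucial point is that the coefficient of the \emph{linear} term $I(-n)$ is $2c_{L,I} n \ne 0$ (using $c_{L,I}\ne 0$). This gives a triangular structure: if one filters $\mathcal{I}$ by the number of Heisenberg generators (or orders monomials appropriately), the leading term of $W(-\lambda)v_{h,h_W} := W(-\lambda_1)\cdots W(-\lambda_k)v_{h,h_W}$ is a nonzero scalar multiple of $I(-\lambda_1)\cdots I(-\lambda_k)v$. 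Since both $\mathcal{W}$ and $\mathcal{I}$ have the same graded dimension — each is a polynomial algebra in one generator per positive integer, so each graded piece has dimension equal to the number of partitions of $n$ — this triangularity forces $\Psi|\mathcal{W}$ to be a graded linear isomorphism.

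For part (2), the strategy is to promote the linear isomorphism to a module isomorphism. I would argue that under the hypothesis $\frac{h_I}{c_{L,I}}-1 \notin -{\mathbb{Z}}_{>0}$, Proposition \ref{free-Verma} gives $\mathcal{F}_{r,s} \cong V^{\HVir}(c_L,0,c_{L,I},h,h_I)$, so the $\HVir$-module $V^{\HVir}$ is a genuine Verma (free) module; in particular $\Phi|\mathcal{I}$ is injective. The $W(2,2)$-module $V^{W(2,2)}(c,c_W,h,h_W)$ is a Verma module for $W(2,2)$, hence freely generated, so $\Psi$ is surjective onto $U(W(2,2)).v_{h,h_I}$ and it suffices to check $\ker\Psi = 0$. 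Because $\Psi$ is a homomorphism of $W(2,2)$-modules sending the highest weight vector to $v_{h,h_I}$, any kernel element would give a nonzero singular or relation vector; restricting to the subspace $\mathcal{W}$, the isomorphism from part (1) shows $\Psi$ is injective on $\mathcal{W}$, and combined with the injectivity of $\Phi|\mathcal{I}$ and the matching of characters this rules out a nonzero kernel. Comparing graded dimensions then upgrades injectivity to an isomorphism of $W(2,2)$-modules.

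The main obstacle I anticipate is making the triangularity argument in part (1) fully rigorous, i.e.\ verifying that the quadratic ``lower terms'' in $W(-\lambda)v_{h,h_W}$ genuinely do not interfere with the leading linear monomial when several $W(-\lambda_i)$ are composed. One must choose the right monomial ordering on $\mathcal{I}$ (for instance, grading by total number of Heisenberg creation operators, where $I(-i)^2$-type terms have strictly higher count than a single $I(-n)$) so that the product of leading terms is again leading; confirming that this ordering is compatible with the noncommutative composition of modes, and that no cancellation occurs, is the delicate combinatorial step. Once the leading-term structure is pinned down, the equality of graded dimensions does the rest automatically.
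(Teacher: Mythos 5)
There is a genuine error in your computation of the leading coefficient, and it is precisely the point where the hypothesis of the lemma must enter. You claim that $W(-n)v_{h,h_W}$ equals $2c_{L,I}\,n\,I(-n)v$ plus quadratic terms, with the nonvanishing of the linear coefficient guaranteed merely by $c_{L,I}\neq 0$. Computing the modes of $Y(w,z)$ for $w=(I(-1)^2+2c_{L,I}I(-2))\mathbf{1}$ on the highest weight vector, the normally ordered product $:I(z)^2:$ contributes not only the terms $\sum_{i=1}^{n-1}I(-i)I(-n+i)v$ but also the cross terms involving $I(0)$, which act by the eigenvalue $h_I$, giving $2h_I I(-n)v$; and the derivative part contributes $2c_{L,I}(n-1)I(-n)v$, not $2c_{L,I}\,n\,I(-n)v$. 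The correct linear coefficient is therefore
$2\bigl(h_I+(n-1)c_{L,I}\bigr)=2c_{L,I}\bigl(\tfrac{h_I}{c_{L,I}}-1+n\bigr)$,
which is the paper's formula (\ref{psi}). This coefficient vanishes exactly when $\tfrac{h_I}{c_{L,I}}-1=-n$ for some $n\in\mathbb{Z}_{>0}$, which is why the hypothesis $\tfrac{h_I}{c_{L,I}}-1\notin-\mathbb{Z}_{>0}$ is indispensable. With your coefficient the hypothesis is never used, and your argument would ``prove'' the lemma in cases where it is false: for $h_I=0$ one has $\Psi(W(-1)v_{h,h_W})=0$, so $\ker\Psi\neq 0$, as the paper's remark following the lemma points out.

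Apart from this, your overall strategy is the same as the paper's: a triangularity argument with respect to the filtration of $\mathcal{I}$ by the number of Heisenberg factors (the image of $W(-n_1)\cdots W(-n_r)v$ has leading term $\nu\, I(-n_1)\cdots I(-n_r)v$ with $\nu\neq 0$ and corrections with strictly more factors), followed by a comparison of graded dimensions. That part of your plan is sound once the coefficient is corrected. For part (2) your detour through $\Phi$ and $\mathcal{F}_{r,s}$ is unnecessary: since $V^{W(2,2)}$ and $V^{\HVir}$ are both free over $U(\mathrm{Vir}^-)$ on $\mathcal{W}$ and $\mathcal{I}$ respectively, and $\Psi$ intertwines the $L(-m)$'s, the linear isomorphism $\Psi|\mathcal{W}:\mathcal{W}\to\mathcal{I}$ together with equality of characters immediately upgrades to an isomorphism of $W(2,2)$--modules, which is all the paper asserts when it says (2) follows from (1).
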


\begin{proof}
First we notice that on $\mathcal{I}$
\begin{equation}
W(-n)\equiv2c_{L,I}\left(  \frac{h_{I}}{c_{L,I}}-1+n\right)  I(-n)+\sum
_{i=1}^{n-1} I(-i)I(-n+i)\quad .\label{psi}
\end{equation}
Then
$$
\Psi(W(-n_{1})\dots W(-n_{r})v_{h,h_{W}})=\nu I(-n_{1})\cdots I(-n_{r}%
)v_{h,h_{I}}+\cdots\quad(\nu\neq0)
$$
where $"\cdots"$ denotes the sum of vectors of the form
$$
a\ I(-j_{1})\cdots I(-j_{s})v_{h,h_{I}},\quad j_{1}+\cdots+j_{s}=n_{1}%
+\cdots+n_{r}\quad(a\in\mathbb{C)}%
$$
where $s>r$. This easily implies that the set
$$
\{\Psi(W(-n_{1})\dots W(-n_{r})v_{h,h_{W}})\ |\ n_{1}\geq n_{2}\geq\cdots
n_{r}\geq1;r\in{\mathbb{Z}_{>0}}\}
$$
is linearly independent. This proves assertion (1). The assertion (2)   follows
from (1).
\end{proof}

\begin{example}
$u_{2}=(W(-2)+\frac{6}{c_{W}}W(-1)^{2})v_{h,h_{W}}$ is a singular vector in the Verma module $V^{W(2,2)}(c,c_{W},h,-\frac{c_{W}}{8}).$ Taking $p=2$ in (\ref{psi}), one gets $\Psi(W(-2))=8c_{L,I}I(-2)+I(-1)^{2}$ and
$\Psi(W(-1))=6c_{L,I}I(-1),$ so $\Psi(u_{2})=8c_{L,I}(I(-2)-\frac
{1}{c_{L,I}}I(-1)^{2})v_{h,h_{I}}$ which is a singular vector in the Verma module $V^{\HVir}(c_{L},0,c_{L,I},h,3c_{L,I}).$
\end{example}

\begin{remark}
In the case $\frac{h_{I}}{c_{L,I}}-1=-p\in\mathbb{Z}_{>0}$ $\ker\Psi$ is not trivial. For example, if $h_{I}=0,$ one gets from (\ref{psi}) that
$\Psi\left(  W(-1)\right)  =0.$ Note, that $W(-1)v_{h,0}$ is a singular
vector in $V^{W(2,2)}(c,c_{W},h,0).$ Since Verma modules $V^{W(2,2)}(c,c_{W},h,h_{W})$ and $V^{\HVir}(c_{L},0,c_{L,I},h,h_{I})$ have equal characters, it follows that $V^{\HVir}(c_{L},0,c_{L,I},h,h_{I})$ is not a highest weight $W(2,2)$-module (but contains $U(W(2,2))v_{h,h_{I}}$ as a proper submodule). The structure of $V^{\HVir}(c_{L},0,c_{L,I},h,h_{I})$ as a $W(2,2)$--module is very interesting and shall be investigated in our future publications.
\end{remark}

\begin{theorem}
\label{singular-w22} Assume that $\frac{h_{W}}{c_{W}}=\frac{1-p^{2}}{24}$ and
$p\in\Z_{>0}$. Let $h_{I},c_{L,I}\in{\mathbb{C}}$ such that
$$
c_{W}=-24c_{L,I} ^2,\ h_{W}=h_{I}(h_{I}-2c_{L,I}),\ \frac{h_{I}}{c_{L,I}}-1=p.
$$
Then
$$
\Psi^{-1}\left(  S_{p}(-\frac{I(-1)}{c_{L,I}},-\frac{I(-2)}{c_{L,I}}%
,\cdots)v_{h,h_{I}}\right)
$$
is a singular vector of conformal weight $p$ in the Verma module $V^{W(2,2)}(c_{L},c_{W},\allowbreak h,h_{W})$.
\end{theorem}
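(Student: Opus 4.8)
The plan is to transport the known Heisenberg--Virasoro singular vector through the vertex algebra homomorphism $\Psi$ established above. The key structural facts are already in place: under the conditions $c_W = -24 c_{L,I}^2$, $h_W = h_I(h_I - 2c_{L,I})$ and $\frac{h_I}{c_{L,I}} - 1 = p \in \mathbb{Z}_{>0}$, the theorem preceding the relevant lemma gives a $W(2,2)$--homomorphism $\Psi : V^{W(2,2)}(c_L,c_W,h,h_W) \to V^{\HVir}(c_L,0,c_{L,I},h,h_I)$, and since $\frac{h_I}{c_{L,I}} - 1 = p \notin -\mathbb{Z}_{>0}$, the Lemma asserts that $\Psi$ is in fact an \emph{isomorphism} of $W(2,2)$--modules (with $\Psi|_{\mathcal{W}} : \mathcal{W} \to \mathcal{I}$ a graded linear isomorphism). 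Thus $\Psi^{-1}$ is well-defined.

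First I would verify that the vector $v := S_p(-\frac{I(-1)}{c_{L,I}},\dots,-\frac{I(-p)}{c_{L,I}})v_{h,h_I}$ lies in $\mathcal{I}$, which is immediate since it is a polynomial in the $I(-n)$ applied to the highest weight vector; hence $\Psi^{-1}(v)$ makes sense and lands in $\mathcal{W}$, giving a vector of conformal weight $p$. Next, by our main Theorem on Schur-polynomial singular vectors, $v$ is a singular vector in $V^{\HVir}(c_L,0,c_{L,I},h,h_I)$, meaning $L(n)v = I(n)v = 0$ for all $n > 0$. The crucial point is that singularity is an $\HVir$--module notion, but it suffices for the $W(2,2)$--singularity: since $\Psi$ is a $W(2,2)$--module isomorphism and the $W(2,2)$--action is realized through $L(n)$ and $W(n) = \Psi(W(n))$, which by the embedding theorem are built from $I(n)$ and $L(n)$, the conditions $L(n)v = 0$ and $W(n)v = 0$ for $n>0$ on the $\HVir$ side pull back under $\Psi^{-1}$ to the statement that $\Psi^{-1}(v)$ is annihilated by the positive modes $L(n), W(n)$ of $W(2,2)$.

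Concretely, I would check $L(n)v = 0$ and $W(n)v = 0$ for $n > 0$ directly on the $\HVir$--picture. The former is exactly the Virasoro part of $v$ being singular, already known. For the latter, $W(n)$ acts on $V^{\HVir}$ as the image under $\Psi$ of the $W(2,2)$ generator, i.e.\ the modes of $Y(I(-1)^2 + 2c_{L,I}I(-2),z)$; since these modes for $n>0$ are expressible through products $I(i)I(j)$ and $I(i)$ with at least one positive-mode annihilating factor hitting $v$, one gets $W(n)v = 0$ for $n>0$. Because $\Psi$ is an isomorphism of $W(2,2)$--modules, $W(n)\Psi^{-1}(v) = \Psi^{-1}(W(n)v) = 0$ and $L(n)\Psi^{-1}(v) = \Psi^{-1}(L(n)v) = 0$ for all $n>0$, so $\Psi^{-1}(v)$ is a singular vector of weight $p$ in $V^{W(2,2)}(c_L,c_W,h,h_W)$.

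The one place requiring genuine care, rather than formal transport, is the consistency of the numerical constraints: I must confirm that the hypothesis $\frac{h_W}{c_W} = \frac{1-p^2}{24}$ is equivalent to (and not merely compatible with) the relations $h_W = h_I(h_I - 2c_{L,I})$ together with $\frac{h_I}{c_{L,I}} - 1 = p$ and $c_W = -24 c_{L,I}^2$. Substituting $h_I = (p+1)c_{L,I}$ gives $h_W = (p+1)c_{L,I}\big((p-1)c_{L,I}\big) = (p^2-1)c_{L,I}^2$, and dividing by $c_W = -24c_{L,I}^2$ yields $\frac{h_W}{c_W} = \frac{p^2-1}{-24} = \frac{1-p^2}{24}$, exactly as required. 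This confirms the weight bookkeeping so that the transported singular vector sits at the correct $W(2,2)$ highest weight, completing the argument; the remainder is the formal application of the isomorphism established in the Lemma.
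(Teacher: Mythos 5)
Your proposal is correct and follows essentially the same route as the paper: check the numerical compatibility $h_W/c_W=(1-p^2)/24$, use the Lemma to invert $\Psi$ on $\mathcal W\to\mathcal I$, observe that the Schur vector is annihilated by $L(n)$ and $W(n)$ for $n>0$ on the $\HVir$ side, and transport this back. The only cosmetic difference is that you invoke part (2) of the Lemma (the full $W(2,2)$--module isomorphism), whereas the paper's proof gets by with part (1) together with the stability $L(n)\mathcal W\subset\mathcal W$ for $n\ge 1$ and injectivity of $\Psi|_{\mathcal W}$; both are valid here.
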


\begin{proof}
We have
$$
\frac{h_{W}}{c_{W}}=\frac{h_{I}(h_{I}-2c_{L,I})}{-24c_{L,I} ^2}=\frac{1-p^{2}%
}{24}\quad\mbox{iff}\quad|\frac{h_{I}}{c_{L,I}}-1|=p.
$$
We can choose $h_{I},c_{L,I}$ such that $\frac{h_{I}}{c_{L,I}}-1=p$. Then
$\Psi|\mathcal{W}$ is a linear isomorphism and there is a non-trivial vector
$v\in\mathcal{W}\subset V^{W(2,2)}(c_{L},c_{W},h,h_{W})$ such that
$\Psi(v)=S_{p}(-\frac{I(-1)}{c_{L,I}},-\frac{I(-2)}{c_{L,I}},\cdots
)v_{h,h_{I}}$. The proof follows from the fact that $S_{p}(-\frac
{I(-1)}{c_{L,I}},-\frac{I(-2)}{c_{L,I}},\cdots)v_{h,h_{I}}$ is also a highest
weight vector for $W(2,2)$ and that $L(n){\mathcal{W}}\subset{\mathcal{W}}$
for $n\geq1$.
\end{proof}

\begin{remark}
A family of singular vectors in the Verma modules over $W(2,2)$-algebras were constructed in \cite{R1}. Now we have found a method for constructing all singular vectors which belong to the subspace $\mathcal{W}$. One should take singular vectors $S_{p}(-\frac{I(-1)}{c_{L,I}},-\frac{I(-2)}{c_{L,I}},\cdots
)v_{h,h_{I}}$. The relation (\ref{psi}) gives a system of equations which shows how one can express $I(-n)$ using elements $W(-n)$ of the $W(2,2)$--algebras.
\end{remark}

\vskip 5mm

\noindent {\bf Acknowledgments.} We would like to thank the referees for their valuable comments.
The authors  are partially supported by the Croatian Science Foundation under the project 2634.

\end{document}